\DeclareMathAlphabet{\mathpzc}{OT1}{pzc}{m}{it}
\newcommand{\subtitle}[1]{%
  \posttitle{%
    \par\end{center}
    \begin{center}\LARGE#1\end{center}
    \vskip0.5em}%
}
\tikzset{every picture/.style={remember picture}}
\let\pgfmathModX=\pgfmathMod@
\let\pgfmathMod@=\pgfmathModX
\pgfplotsset{compat=newest} 
\pgfplotsset{plot coordinates/math parser=false} 
\newlength\figureheight 
\newlength\figurewidth 
\newcommand{\convexpath}[2]{
[   
    create hullnodes/.code={
        \global\edef\namelist{#1}
        \foreach [count=\counter] \nodename in \namelist {
            \global\edef\numberofnodes{\counter}
            \node at (\nodename) [draw=none,name=hullnode\counter] {};
        }
        \node at (hullnode\numberofnodes) [name=hullnode0,draw=none] {};
        \pgfmathtruncatemacro\lastnumber{\numberofnodes+1}
        \node at (hullnode1) [name=hullnode\lastnumber,draw=none] {};
    },
    create hullnodes
]
($(hullnode1)!#2!-90:(hullnode0)$)
\foreach [
    evaluate=\currentnode as \previousnode using \currentnode-1,
    evaluate=\currentnode as \nextnode using \currentnode+1
    ] \currentnode in {1,...,\numberofnodes} {
  let
    \p1 = ($(hullnode\currentnode)!#2!-90:(hullnode\previousnode)$),
    \p2 = ($(hullnode\currentnode)!#2!90:(hullnode\nextnode)$),
    \p3 = ($(\p1) - (hullnode\currentnode)$),
    \n1 = {atan2(\y3,\x3)},
    \p4 = ($(\p2) - (hullnode\currentnode)$),
    \n2 = {atan2(\y4,\x4)},
    \n{delta} = {-Mod(\n1-\n2,360)}
  in 
    {-- (\p1) arc[start angle=\n1, delta angle=\n{delta}, radius=#2] -- (\p2)}
}
-- cycle
}
\renewcommand*\env@matrix[1][c]{\hskip -\arraycolsep
  \let\@ifnextchar\new@ifnextchar
  \array{*\c@MaxMatrixCols #1}}
\newcommand{\E}{\mathbb{E}}
\DeclareRobustCommand{\rvdots}{%
  \vbox{
    \baselineskip4\p@\lineskiplimit\z@
    \kern-\p@
    \hbox{.}\hbox{.}\hbox{.}
  }}
\renewcommand{\Re}{\operatorname{Re}}
\renewcommand{\psi}{\Psi}
\newcommand{\supp}{\mathrm{supp\,}}
\DeclareMathOperator*{\argmax}{arg\,max\,}
\DeclareMathOperator*{\argmin}{arg\,min\,}
\DeclareMathOperator*{\esssup}{ess\,sup\,}
\DeclareMathOperator*{\essinf}{ess\,inf\,}
\newcommand{\diag}{\mathop{\mathrm{diag}}}
\newtheorem{theorem}{Theorem}[section]
\newtheorem{corollary}{Corollary}[theorem]
\newtheorem{lemma}[theorem]{Lemma}
\newtheorem{proposition}[theorem]{Proposition}
\theoremstyle{definition}
\theoremstyle{remark}
\newtheorem{remark}{\bf Remark}[section]
\newtheorem{example}{\bf Example}[section]
\DeclarePairedDelimiterX{\infdivx}[2]{(}{)}{%
  \kern1pt #1\delimsize\|\,#2%
}
\newcommand{\infdiv}{D_{\kern-0.5pt K\kern-1pt L\kern-0.5pt}\infdivx}
\renewcommand*{\thesection}{\arabic{section}}
\let\subparagraph\paragraph
\let\paragraph\subsection
\titleformat{\subsection}[runin]{\normalfont\normalsize\scshape}{}{1em}{\thesection.\arabic{subsection}.~}
\begin{document}

\title{\textbf{Probabilistic Neural Network: Frequency and Moment Learnings} }

\author[1]{Kyung Soo Rim\thanks{This work was supported by a grant from the National Research Foundation of Korea, Grant No. NRF-2017R1E1A1A03070307.}}
\author[2]{U Jin Choi}
\affil[1]{\rm Department of Mathematics, Sogang University, Seoul, Korea \protect \\
{\small E-mail: \href{mailto:ksrim@sogang.ac.kr}{ksrim@sogang.ac.kr}}}
\affil[2]{\rm Department of Mathematical Sciences, Korea Advanced Institute of Science and Technology, Daejeon, Korea \protect \\
{\small E-mail: \href{mailto:ujchoi@kaist.ac.kr}{ujchoi@kaist.ac.kr}}}
\date{April 16, 2020}                     
\setcounter{Maxaffil}{0}
\renewcommand\Affilfont{\itshape\small
}

\maketitle

\begin{abstract}
We introduce probabilistic neural networks that describe unsupervised synchronous learning on an atomic Hardy space and space of bounded real analytic functions, respectively. For a stationary ergodic vector process, we prove that  the probabilistic neural network yields a unique collection of neurons in global optimization without initialization and back-propagation. 
During learning, we show that all neurons communicate with each other, in the sense of linear combinations, until the learning is finished.
Also, we give convergence results for the stability of neurons, estimation methods, and topological statistics 
to appreciate unsupervised estimation
of a probabilistic neural network.
As application, we attach numerical experiments on samples
drawn by a standing wave.
\end{abstract}

\begin{IEEEkeywords}
probabilistic neural network, synchronous learning, unsupervised learning, frequency learning, moment learning, network probability, governing probability, energy function, partition function, atomic Hardy space, bounded real analytic function, learning rate, full learning, Koopman mode decomposition, dynamic mode decomposition, active path, topological statistic.
\end{IEEEkeywords}

\section{Introduction}

Nowadays, learning algorithms and architectures  are currently being developed dramatically for deep neural networks using the back-propagation of the gradient descent method. 
In the mathematical theory of artificial neural networks, the universal approximation theorems, 
which were proved by G. Cybenko in 1989  (\cite{cybenko}) and by K. Hornik showed in 1991 (\cite{hornik}),
state
that a feed-forward network with hidden layers containing a finite number of neurons can approximate continuous functions on compact subsets of $\mathbb{R}^n$, with an activation function. 
However, it does not concern the algorithmic learnability of those parameters.

The most common form of machine learning including recurrent neural networks, deep or not, is supervised learning. Wearing the back-propagation process in training, it has not provided  global optimization,  
and has been dependent on initial conditions.
There are many splendent results for the neural networks as main books such as \cite[Bishop]{bishop}, \cite[Goodfellow, Bengio, Courville]{goodfellow-bengio-courville}, \cite[Haykin]{haykin},  \cite[Murphy]{murphy}  are well rewritten mathematically, to provide examples.
 As the next generation of deep neural networks, many scientists mention unsupervised learning (\cite[Bengio, Courville, Vincent]{bengio-courville-vincent}) thanks to advances in their architecture and ways of training them. Also, they expect unsupervised learning to become far more important in the long term, because there widely exist unsupervised signals, e.g., that are originated from human and animal learning.

The aim of this article is to find a network that gives the globally optimal unsupervised synchronous learning without any initialization and back-propagation. Using the probabilistic method and  theory of dynamics,  we define two types of probabilistic neural network and  derive unique collections of neurons such that their network probabilities  are the global solution for the observed samples.

 The learning process of the probabilistic neural network is not carried out  sequentially by hierarchical layers but is transmitted to all neurons at the same time as input. 
Once sample data is presented, all neurons in the probabilistic neural network interact simultaneously until the learning is complete. Also, there is no back-propagation. Meanwhile, the better the data (e.g. independent and identical distributed or stationary ergodic data) for learning get, 
the better the probabilistic neural network predicts.
In addition, the probabilistic neural network gives a certain criterion of learning rate, from which we can control the amount of the observed samples.
Thus, the probabilistic neural network is closer to the biological human brain (\cite[Watson]{watson}, \cite[Geirhos, Janssen, Sch\"utt, Rauber, Bethge, Wichmann]{geirhos-janssen-schutt-rauber-bethge-wichmann}).

This article is organized as follows.
In the next section, we define energy and partition functions for a network probability which contains hidden parameters as unknown neurons. In  section 3 we consider the Kullback-Leibler divergence between two probability densities and discuss that the energy function of the network probability is expanded in a space of functions 
as an infinite sum. 
The Fr\'echet derivative  of a cross entropy is obtained to identify hidden parameters which are to be a solution of Fr\'echet  partial differential equations. 
In section 4 we discuss 
the atomic Hardy space and space of bounded real analytic functions, on which we derive the exact forms of hidden parameters which are determined uniquely. 
The functions of the two spaces play a role of energy functions for frequency  and moment learnings, respectively.
As corollaries, we have simpler forms of parameters. Although an energy function may be an infinite series, we prove that the network probability equipped with the partial sums of the energy function converges to the limiting distribution in $L^1$-norm.  
In section 5 we prove that communication emerges between neurons during learning, until the learning is complete. 
Moreover, the learning rates of probabilistic neural networks are defined and explored  in the section of application.
In section 6 we derive a dynamical system 
from a cumulative  distribution function for a time series of random vectors. If a sequence of samples is the stationary ergodic  processes, then we can generate plenty of samples using the Koopman mode decomposition (KMD) of the induced dynamical system, linear or nonlinear (\cite{rowley-mezic-bagheri-schlatter-henningson}). In section 7 we prove that the empirical distribution function derived from a stationary ergodic process converges to the limiting distribution in $L^1$-norm. This guarantees the stability of convergence for empirical distribution functions.  For probabilistic neural networks, we define the active path for a signal, and compute a likelihood of the signal to exist on the network in section 8.  In addition, the physical interpretation is introduced by suitable topology on the active path. 
Furthermore, more elaborate version of examples will be examined in section 9 including topological statistics for estimations.

Throughout this paper we use the following general notations: 
For a random vector $X$ with its value $x$, $X_r$ means a random vector at time $t$ with its value $x_r$ and  $X_{r,k}$ is the $k$th random vector of $X_{r}$ with its value $x_{r,k}$. Moreover, $(X)$ and $(x)$ denote sequences of $X$ and its value $x$, respectively.   
The notation of $\E(X)_p$ means the expectation of $X$ with a probability distribution of $p$. 
For quantities $A$ and $B$,  we write $A\lesssim_{n} B$ if there is a constant $C_n$ which depends only on $n$ such that $A\le C_n B$, where 
possibly depending on some other variables as well, we append them to $n$. 
Also, 
$A\approx B$ means $A\lesssim B$, $B\lesssim A$, and write $A\equiv B$ when $A$ is defined as $B$. 
If an operations appear between multi-indexes, e.g., an inequality, combinatorial notation, partial derivative, etc.,  
it follows the rule of multi-index operations.  
Especially, $\dot{x}$ is the derivative of $x$ with respect to $t$ when $t$ is regarded as time, and
$\mathbb{Z}^\ast$ the set of all non-negative integers.
%
Finally, the notation of $|\cdot|$ denotes the absolute value of a scalar or multi-index, or the euclidean norm of a vector. 
Sometimes one can meet  `$\cdot$' just like $|\cdot|$ without any concrete variable. To avid abusing notations, we omit  any variable if we do not need to.

\smallskip

\section{Energy and partition functions}

Let $X=(X_1,\ldots,X_n)$ be a  random vector  from $P_0$ unknown, namely, a governing probability. 
Assume that there is a probability $P$ of $X$, namely, a network probability, with $Y$ a collection of parameters such that 
\begin{equation} \label{network probability}
\begin{split}
	&P_0(X_1=x_1,\ldots,X_n=x_n) \\
	 &\qquad= P(X_1=x_1,\ldots,X_n=x_n\mid Y=y),
\end{split}	 
\end{equation}
where $y=(y_\alpha)$ is a countable collection of complex numbers.
For simplicity of expression, we also call
$p_0(x_1,\ldots,x_n)$ and  $p(x_1,\ldots,x_n\mid y)$ a governing probability and network probability for an observed sample vector $x=(x_1,\ldots,x_n)$ with a collection of parameters $y=(y_\alpha)$, respectively, which play a role of neurons. 


Throughout the article, assume that probability distributions defined on a Borel $\sigma$-algebra assign a positive probability to a nonempty open set, 
the entropy of $p_0$ is finite, 
and  for each $x$, $p(x\mid y)$ is continuously differentiable as a function of $y$. 
Since $p_0$ does not contain a neuron, it alone cannot describe a neural network.
From  information of $x$ fixed, 
we devote to find $y$ at which  $p(x\mid y)$ equals  $p_0(x)$.

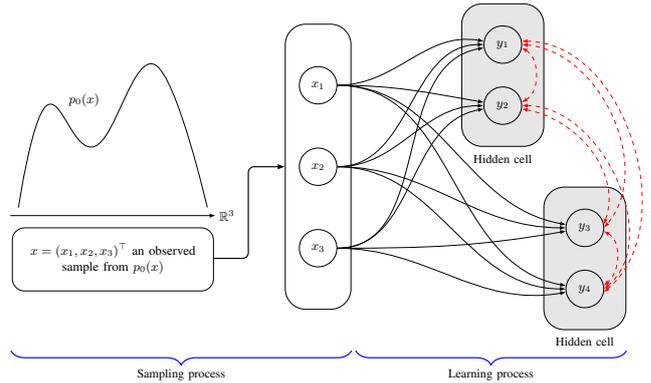
\begin{figure}[tp]
\begin{center}  
%
\tikz \node [scale=0.54, inner sep=0,] {
\begin{tikzpicture}
	\tikzstyle{place}=[circle, text centered, draw=black, text width={width("iMAGi")}]
	\tikzstyle{pretty}=[rectangle, text centered, draw=black, minimum width=0.65cm, minimum height=0.65cm]
	\tikzstyle{ground}=[fill,pattern=north east lines,draw=none,minimum width=0.3,minimum height=0.6]
	
	   \draw[fill=black!0, rounded corners=5mm] (-0.3, 0.5) rectangle (1.3, -6.5) {};
	   \draw[fill=black!10, rounded corners=5mm] (4, 1) rectangle (6, -2.5) {};
	   \draw[fill=black!10, rounded corners=5mm] (6., -3.5) rectangle (8., -7) {};

	\draw node at (0.5, -1) [place] (first_1) {$x_1$}; 
	\draw node at (0.5, -3) [place] (first_2) {$x_2$};  
	\draw node at (0.5, -5) [place] (first_3) {$x_{3}$};	
	
	\node at (5, 0) [place] (1_1)
		{$y_{1}$};  
	\node at (5, -1.5) [place] (1_3)
		{$y_{2}$};

	\node at (7., -4.5) [place] (2_1) {$y_{3}$}; 
	\node at (7., -6) [place] (2_2) {$y_{4}$};

			\draw [->,black,>=latex] (first_1.0) to [out=0,in=170] (1_1.170);
			\draw [->,black,>=latex] (first_1.0) to [out=0,in=170] (1_3.170);
			
			\draw [->,black,>=latex] (first_2.0) to [out=0,in=180] (1_1.180);
			\draw [->,black,>=latex] (first_2.0) to [out=0,in=180] (1_3.180);
			
			\draw [->,black,>=latex] (first_3.0) to [out=0,in=190] (1_1.190);
			\draw [->,black,>=latex] (first_3.0) to [out=0,in=190] (1_3.190);

			\draw [->,black,>=latex] (first_1.0) to [out=0,in=170] (2_1.170);
			\draw [->,black,>=latex] (first_1.0) to [out=0,in=170] (2_2.170);

			\draw [->,black,>=latex] (first_2.0) to [out=0,in=180] (2_1.180);
			\draw [->,black,>=latex] (first_2.0) to [out=0,in=180] (2_2.180);

			\draw [->,black,>=latex] (first_3.0) to [out=0,in=190] (2_1.190);			
			\draw [->,black,>=latex] (first_3.0) to [out=0,in=190] (2_2.190);
			
			\draw [dashed,<->,red,>=latex] (1_1.-10) to [out=-30,in=30] (1_3.10);
			\draw [dashed,<->,red,>=latex] (1_1.0) to [out=-5,in=50] (2_1.0);
			\draw [dashed,<->,red,>=latex] (1_1.10) to [out=0,in=40] (2_2.-10);
			\draw [dashed,<->,red,>=latex] (1_3.-10) to [out=-10,in=70] (2_1.10);
			\draw [dashed,<->,red,>=latex] (1_3.0) to [out=0,in=40] (2_2.0);
			\draw [dashed,<->,red,>=latex] (2_1.-10) to [out=-30,in=40] (2_2.10);

 	\draw [thick, blue,decorate,decoration={brace,amplitude=10pt,mirror},xshift=0.4pt,yshift=-0.4pt](-7, -7.5) -- (1.3, -7.5) node[black,midway,yshift=-0.6cm] {\small Sampling process}; %
	\node [text centered, text width=3.4cm] at (5,-2.8)   {\small Hidden cell} ;
	\node [text centered, text width=3cm] at (7.,-7.3)  {\small Hidden cell} ;
 	\draw [thick, blue,decorate,decoration={brace,amplitude=10pt,mirror},xshift=0.4pt,yshift=-0.4pt](1.4, -7.5) -- (8, -7.5) node[black,midway,yshift=-0.6cm] {\small Learning process}; %
\begin{pgfonlayer}{foreground}
\end{pgfonlayer}

\draw[->,>=latex] (-7,-4.2) -- (-2,-4.2) node[right] {\;$\mathbb{R}^3$};
     \draw  plot[smooth, tension=.7] coordinates{(-6.8,-4) (-6.1176,-1.5) (-4.941,-2.5)  (-3.47,-0.5) (-2.2,-4)};
    \node at (-5.2,-1.35) {$p_0(x)$};    
    \node[align=center] at (-4.5,-5.25) {$x=(x_1,x_2,x_3)^{\top}$ an observed \\ sample from $p_0(x)$};    
   \draw[fill=black!0, rounded corners=3mm, fill opacity=0] (-6.95, -4.5) rectangle (-2.05, -6.05) {};
\draw [thick,->,>=latex,rounded corners] (-2.05,-5.25) -| (-1.2,-3) -- (-0.3,-3);
\end{tikzpicture}
};
\caption{This is an example of signal-flow graph for $p(x\mid y)$. A sample is drawn from $p_0(x)$ from which $(y_\alpha)$ are identified (learned) that satisfy $p_0(x)=p(x\mid y)$. Fo estimation, $y_\alpha$ are classified as several groups, namely, cells. The black arrows mean  flows which start 
from input signals and the red dotted arrows denote communications between parameters with certain relations.} 
\label{fig:M1}
\end{center}
\end{figure}

The fact that a sample $x$ is observed, implies that a certain network probability 
causes $x$, and thus, the network probability  does not vanish identically.
We rewrite $p(x\mid y)$ as a quotient of two positive functions,
\begin{equation} \label{rep pf np}
     p(x\mid y) = \frac{f(x,y)}{g(y)},
\end{equation}
where $f(x,y)$ is an integrable function for $x$ combined with $y$ and
$g(y) = \int_{S}f(x,y)dx$ is a partition function of $y$ with respect to $f$. 
By (\ref{network probability}), it follows that
\begin{equation*} 
    p_0(x) \propto f(x,y)
\end{equation*}
for each $y$. This means that the governing probability is proportional to $f$. 
From $f(x,y)>0$,  (\ref{rep pf np}) is written as  
\begin{equation} \label{rep pf np2}
     p(x\mid y) = \frac{e^{-E(x;y)}}{Z(y)},
\end{equation}
where $E(x;y)=-\ln f(x,y)$ is called an energy function for the network
and rewrite $g(y)$ as $Z(y)$ conventionally.
If the components  $X_1,\ldots,X_n$ of $X$ are \textsc{i.i.d.}, then by (\ref{rep pf np})
$f(x_1,\ldots,x_n,y) = \prod_k f(x_k,y)$, consequently,
\begin{equation} \label{iid}
 	E(x_1,\ldots,x_n;y) = E(x_1;y) + \cdots +  E(x_n;y),
\end{equation}
where $E(x_k;y)=-\ln f(x_k,y)$.

Sometimes, a flow graph is useful to understand a random process.
A signal-flow graph is a network of directed links that are interconnected at certain points called nodes $y_\alpha$. 
A probabilistic neural network  is also a signal-flow graph which consists of an observed sample and parameters of a network probability that satisfies (\ref{network probability}).
A hidden node $y_\alpha$ has associated every input signal $x_k$.
The probabilistic neural network 
 is represented by means of  Figure \ref{fig:M1}
which consists of two parts of the sampling and learning processes. The former provides samples from the data-driven method if we need it, while the latter approximates the values of parameters.  

\smallskip
\section{The Kullback–Leibler divergence} \label{sa}

According to (\ref{network probability}) and (\ref{rep pf np2}), the goal  is to identify $y$ such  that 
\begin{equation} \label{goal}
	p_0(x)=\frac{e^{-E(x;y)}}{Z(y)}.
\end{equation}
We  call a component $y_\alpha$ of $y$ in (\ref{goal}) a neuron of the network probability or governing probability.
To solve the equation (\ref{goal}), information of $E(x;y)$ is very important.
In this study we are
devoted to analyzing it by a linearization which is expressed in a suitable infinite dimensional space.

For  probability distributions $p_1$ and $p_2$ defined on the same probability space, 
the Kullback–Leibler divergence between $p_1$ and $p_2$ is defined by
\begin{equation*}
	\infdiv{p_1}{p_2} = \mathbb{E}\kern -2pt \left(\ln\frac{p_1}{p_2}\right)_{\kern -2pt p_1}, 
\end{equation*}
which is defined only for $x$, where $p_2(x)=0$ implies $p_1(x)=0$. 
Although the Kullback–Leibler divergence is not a distance, it satisfies the following three conditions;
\begin{enumerate}[$(a)$]
\item $\infdiv{p_1}{p_2}\ge0$ \quad (Gibbs' inequality).
\item $\infdiv{p_1}{p_2}=0$\, if and only if \,$p_1=p_2$\; a.e. \quad (identity of indiscernibles).
\item $\infdiv{p_1}{p_2}\ne \infdiv{p_2}{p_1}$ \quad (asymmetricity).
\end{enumerate}
For non-negative measurable functions $f_1$ and $f_2$, by the equality condition of Jensen's inequality, $(b)$ is extended to
$\infdiv{f_1}{f_2}=0$  if and only if 
\begin{equation*}
	f_1=cf_2\;\mbox{ a.e.}
\end{equation*}
for some constant $c$.  


The Kullback-Leibler divergence with $p_0$ and $p$ instead of $p_1$ and $p_2$ yields that 
\begin{equation} \label{cross entropy}
\begin{aligned}
    \infdiv{p_0}{p}
        &=\kern-0.5em\underbrace{-\mathbb{E}\left(\ln p(\;\cdot\mid y)\right)_{p_0}}_{{\mbox{\footnotesize cross entropy of }\,p_0\; \mbox{\footnotesize and}\;p}}\kern-0.5em
         - \underbrace{\big(-\mathbb{E}\left(\ln p_0\right)_{p_0}\big)}_{{\mbox{\footnotesize entropy of }\,p_0}} \\ 
        &\equiv H(p_0,p) - H(p_0)\ge0.
\end{aligned}
\end{equation} 
So, for all $y$, $H(p_0,p)\ge H(p_0)$ and the Kullback-Leibler divergence can be written as the cross entropy of $p_0$ and $p$,  minus the entropy of $p_0$.
To see (\ref{goal}), by the identity of indiscernibles of the Kullback-Leibler divergence, we have to only find $y$ such that $\infdiv{p_0}{p}=0$.
Two quantities of  $\infdiv{p_0}{p}$ and $H(p_0,p)$ are identical by the constant $H(p_0)$ difference. 
The first step toward figuring out the most efficient solution 
is to determine $y$ such that 
\begin{equation} \label{entropy form}
     \argmin_{y} \infdiv{p_0}{p(\,\cdot\mid y)} = \argmin_{y} H(p_0,p(\,\cdot\mid y)).
\end{equation}
Unfortunately, it is  difficult to clarify (\ref{entropy form}) directly because of nonlinearity of $H(p_0,p(\,\cdot\mid y))$. 
To overcome that issue, we will expand the nonlinear energy function in suitable Banach spaces.

Let $r$, $r'$ be a pair of conjugate exponents with $1\le r\le\infty$.
For $1\le r' <\infty$,
let $E(x;y)\in \ell^r(\mathbb{Z}^n,\mathcal{F}(S)) \otimes \ell^{r'}(\mathbb{Z}^n,\mathbb{C})\equiv\ell^r(\mathcal{F}(S))\otimes\ell^{r'}$
be an operator such that
$(\phi_\alpha)\in\ell^r(\mathcal{F}(S))$ and $(y_\alpha)\in\ell^{r'}$, 
where $\mathcal{F}(S)$ is a proper function space on a compact sample space $S$ and $\otimes$  a tensor product. 
If $r'=\infty$, then $(y_\alpha)$ is chosen in $c_0(\mathbb{Z}^n,\mathbb{C})\equiv c_0$ as a subspace of $\ell^{\infty}(\mathbb{Z}^n,\mathbb{C})$. 

Suppose that $E$ has the form of
\begin{equation} \label{linearization}
	E(x;y)=\sum_\alpha\phi_\alpha(x)y_\alpha
\end{equation}
such that for $1\le r<\infty$, 
\begin{equation*}
	\|(\phi_\alpha)\|_{\ell^r} 
\end{equation*}	
is uniformly bounded on $S$,
and for $r=\infty$,
\begin{equation*}
	\sup_\alpha |\phi_\alpha|
\end{equation*}	
is uniformly bounded on $S$.
Note that $E$ converges at every pair of $x$ and $y$ by H\"older's inequality. 
For the partition function of $E$, furthermore, the integrability of $e^{-E(x;y)}$ for any $y$, is always assumed.

\begin{lemma} \label{convexity}
If $E$ satisfies $(\ref{linearization})$, then 
$\infdiv{p_0}{p}$ is well defined and 
its Fr\'echet derivative is induced by 
\begin{equation} \label{frechet}
	\partial \infdiv{p_0}{p}(h)= \sum_\alpha \partial_{y_\alpha} H(p_0,p) h_\alpha
\end{equation}
for $h\in\ell^{r'}$ if $1\le r' <\infty$ and $h\in c_0$ if $r'=\infty$.
\end{lemma}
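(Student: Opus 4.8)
The plan is to use the linear ansatz $(\ref{linearization})$ to turn the cross entropy into an explicit function of $y$ whose only nonlinearity is the logarithm of a partition integral, and then differentiate that function. First I would substitute $\ln p(x\mid y)=-E(x;y)-\ln Z(y)$ together with $(\ref{linearization})$ into the definition of $H(p_0,p)$ to obtain
\begin{equation*}
	H(p_0,p)=\sum_\alpha y_\alpha\,\E(\phi_\alpha)_{p_0}+\ln Z(y),\qquad Z(y)=\int_S e^{-\sum_\alpha\phi_\alpha(x)y_\alpha}\,dx .
\end{equation*}
By $(\ref{cross entropy})$ the divergence $\infdiv{p_0}{p}$ is $H(p_0,p)$ minus the constant $H(p_0)$, so it has the same Fr\'echet derivative; hence it suffices to study the map $y\mapsto H(p_0,p(\,\cdot\mid y))$ on $\ell^{r'}$ (resp.\ $c_0$).

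For well-definedness I would note that the compact set $S$ has finite Lebesgue measure and that, by H\"older's inequality with the uniform boundedness of $\|(\phi_\alpha)\|_{\ell^r}$ (resp.\ of $\sup_\alpha|\phi_\alpha|$ when $r=\infty$), the function $E(\,\cdot\,;y)$ is bounded on $S$ for each fixed $y$. Consequently $0<Z(y)<\infty$, both terms in the display above are finite, $p(\,\cdot\mid y)>0$ on $S$ makes the defining condition of the Kullback--Leibler divergence vacuous, and $\infdiv{p_0}{p}=H(p_0,p)-H(p_0)\ge0$ is a finite number because $H(p_0)$ is assumed finite.

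Next I would compute the partials by differentiating $Z$ under the integral sign --- legitimate since $|\partial_{y_\alpha}e^{-E(x;y)}|=|\phi_\alpha(x)|\,e^{-\Re E(x;y)}$ is dominated by a constant on $S$, locally uniformly in $y$ --- giving $\partial_{y_\alpha}\ln Z(y)=-\E(\phi_\alpha)_{p(\,\cdot\mid y)}$ and hence $\partial_{y_\alpha}H(p_0,p)=\E(\phi_\alpha)_{p_0}-\E(\phi_\alpha)_{p(\,\cdot\mid y)}$. To see that the candidate derivative $h\mapsto\sum_\alpha\partial_{y_\alpha}H(p_0,p)\,h_\alpha$ is a bounded linear functional on $\ell^{r'}$ --- and on $c_0$ when $r'=\infty$, whose dual is $\ell^1=\ell^r$ --- I would invoke Minkowski's integral inequality: $\|(\E(\phi_\alpha)_\mu)_\alpha\|_{\ell^r}\le\sup_{x\in S}\|(\phi_\alpha(x))_\alpha\|_{\ell^r}<\infty$ for $\mu=p_0$ and for $\mu=p(\,\cdot\mid y)$ alike, so the gradient sequence lies in the correct dual space. (The same computation shows the Hessian of $H(p_0,p)$ is the covariance of $(\phi_\alpha)$ under $p(\,\cdot\mid y)$, hence positive semidefinite --- the convexity alluded to in the name of the lemma.)

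Finally, Fr\'echet differentiability proper reduces to a remainder estimate. Writing $\psi_h(x)=\sum_\alpha\phi_\alpha(x)h_\alpha$, H\"older gives $|\psi_h(x)|\le C\,\norm{h}_{\ell^{r'}}$ uniformly on $S$; since $\sum_\alpha y_\alpha\E(\phi_\alpha)_{p_0}$ is exactly linear in $y$, the whole remainder comes from $\ln Z$, and expanding $Z(y+h)=\int_S e^{-E(x;y)}e^{-\psi_h(x)}\,dx$ by $e^{-\psi_h}=1-\psi_h+O(\psi_h^2)$ (uniformly, because $\psi_h$ is uniformly small) yields $\ln Z(y+h)-\ln Z(y)=\sum_\alpha h_\alpha\partial_{y_\alpha}\ln Z(y)+O(\norm{h}_{\ell^{r'}}^2)$, which is $o(\norm{h}_{\ell^{r'}})$; adding the exact linear increment gives $(\ref{frechet})$. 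I expect the only genuine work to be these two uniform bounds --- H\"older together with uniform boundedness of $(\phi_\alpha)$ to control $\psi_h$ and the second-order error, and Minkowski's integral inequality to place the gradient in the right space; when the $y_\alpha$ are genuinely complex, ``Fr\'echet derivative'' is read over the underlying real Banach space and the computation applies unchanged to real and imaginary parts.
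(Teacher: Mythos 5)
Your proof is correct, and while it shares the same skeleton as the paper's argument (decompose $H(p_0,p)$ into the exactly linear term $\sum_\alpha y_\alpha\E(\phi_\alpha)_{p_0}$ plus $\ln Z(y)$, compute the partials $\E(\phi_\alpha)_{p_0}-\E(\phi_\alpha)_p$ by differentiating under the integral, and use Minkowski's integral inequality to place the gradient in $\ell^r$), it handles the actual Fr\'echet-differentiability step by a genuinely different and arguably cleaner route. The paper restricts first to finitely supported increments $h$, writes the remainder via the integral form of the mean value theorem as $\sum_\alpha\bigl(\int_0^1\partial_{y_\alpha}H(p_0,p(\,\cdot\mid y+th))\,dt-\partial_{y_\alpha}H(p_0,p)\bigr)h_\alpha$, kills it with dominated convergence and the continuity of $p(x\mid\cdot\,)$, and only then extends to all of $\ell^{r'}$ (resp.\ $c_0$) by density of clipped sequences together with the boundedness of the limiting functional. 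You instead observe that the entire nonlinearity sits in $\ln Z$, use H\"older to get the uniform bound $|\psi_h(x)|\le C\norm{h}_{\ell^{r'}}$ on $S$, and expand $e^{-\psi_h}$ to second order; this yields a quantitative $O(\norm{h}^2)$ remainder valid for arbitrary $h$ in one stroke, so the clipping-and-density step disappears entirely, and as a bonus you read off the positive semidefinite Hessian (the covariance of $(\phi_\alpha)$ under $p$), which explains the lemma's name. What the paper's route buys in exchange is that it needs only first-order information and mere continuity of $y\mapsto p(x\mid y)$ rather than a uniform second-order Taylor bound; both arguments share the same implicit gloss over the complex-valuedness of the $y_\alpha$, which you at least flag explicitly by passing to the underlying real Banach space.
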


\begin{proof} 
From (\ref{cross entropy}), we only prove the lemma with $H(p_0,p)$ instead of $\infdiv{p_0}{p}$, since $H(p_0)$ is a positive number.
We first show the summability of $H(p_0,p)$.
By the definition of the cross entropy in (\ref{cross entropy}), 
\begin{equation} \label{expansion of cross entropy}
\begin{aligned}
	H(p_0,p) 
	&=\E(E\big(\,\cdot\,;y) + \ln Z(y)\big)_{\kern-0.1em p_0} \\
	&=\E\Big(
		\sum_\alpha \phi_\alpha(\,\cdot\,)y_\alpha\!\Big)_{\kern-2pt p_0} 
	\!+ \ln\! \int_{S} \!e^{-\kern-1pt \sum_\alpha\kern-2pt  \phi_\alpha\kern-1pt (x)y_\alpha} dx \\
	&\equiv I_1+ I_2.	
\end{aligned}
\end{equation}

By assumption, $I_2$ is readily finite. We show the convergence of $I_1$.
Fix $y$. If $r=\infty$, then
by the triangle inequality, $I_1$ is bounded by
\begin{equation*}
	\E\Big(\!\sup_\alpha|\phi_\alpha|\Big)_{\kern-0.1em p_0}\! \|y\|_{\ell^1}
		\le \big\|\sup_\alpha|\phi_\alpha|\big\|_{L^\infty(S)} \|y\|_{\ell^{r'}}<\infty.
\end{equation*}

For $1\le r<\infty$, by the triangle inequality and 
by H\"older's inequality again, $I_1$ is less than or equal to
\begin{equation}  \label{final bound of I_E}
\begin{aligned}
	&\E\!\left(\!\Big( \sum_{\alpha} |\phi_\alpha|^r \Big)^{1/r} \right)_{\kern-0.2em p_0} \!\! \|y\|_{\ell^{r'}} \\
	&\hspace{1.0cm}\le \big\|\|(\phi_\alpha)\|_{\ell^r} \big\|_{L^\infty(S)}\|y\|_{\ell^{r'}} \\
	&\hspace{1.0cm}<\infty.
\end{aligned}		
\end{equation}

We will find the Fr\'echet derivative for $H(p_0,p)$:
By interchangeability of integral and limit signs, 
the ordinary partial derivative of
$H(p_0,p(\,\cdot \mid (\ldots, y_\alpha,\ldots))$ with respect to $y_\alpha$ exists. Indeed,
\begin{equation} \label{f-derivative}
\begin{aligned} 
	&\partial_{y_\alpha} H(p_0,p(\,\cdot\mid y)) \\
	        &=\E\kern-0pt \left(\partial_{y_\alpha} E(\,\cdot\,;y)\right)_{\kern-0pt p_0}
            		+ \E\kern-0pt \left(\frac{\partial_{y_\alpha} Z(y)}{Z(y)} \right)_{\kern-0pt p_0}\\
	        &=\E\kern-0pt \left( \partial_{y_\alpha} E(\,\cdot\,;y) \right)_{\kern-0pt p_0}
            		+ \frac{\partial_{y_\alpha} Z(y)}{Z(y)} \\
	        &=\E\kern-0pt \left(\partial_{y_\alpha} E(\,\cdot\,;y) \right)_{\kern-0pt p_0}
			- \E\kern-0pt \left(\partial_{y_\alpha} E(\,\cdot\,;y) \right)_{\kern-0pt p} \\
	        &=\E\kern-0pt \left(\phi_\alpha \right)_{\kern-0pt p_0}
			- \E\kern-0pt \left(\phi_\alpha \right)_{\kern-0pt p},
\end{aligned}	 
\end{equation}	
where the last term is finite from the uniform boundedness of $\|(\phi_\alpha)\|_{\ell^r}$ $(1\le r\le\infty)$.

Next, we will prove that  the Fr\'echet derivative  of $H(p_0,p)$
is written as
\begin{equation*}
	\partial H(p_0,p)(h) = \sum_\alpha \partial_{y_\alpha} H(p_0,p)h_\alpha
\end{equation*} 
for $h\in\ell^{r'}$ if $1\le r' <\infty$ and $h\in c_0$ if $r'=\infty$.
%
Actually, 
let $h$ be clipped out such that $|\supp h|<\infty$.
By the chain rule, 
\begin{equation*} 
\begin{aligned}
	&H(p_0,p(\,\cdot\mid y+h))\kern-1pt - \kern-1pt H(p_0,p(\,\cdot\mid y))\kern-1pt 
			-\kern-1pt \sum_\alpha\kern-1pt  \partial_{y_\alpha} \kern-1pt H(p_0,p)  h_\alpha \\
		&= \sum_\alpha \kern-2pt \left(
			 	\int_0^1 \kern-2pt\partial_{y_\alpha} \kern-1pt H(p_0,p(\,\cdot\mid y+th)) \,dt 
				- \partial_{y_\alpha} \kern-1pt H(p_0,p) \!\right) \kern-2pt h_\alpha \\
		&\equiv I\kern-1pt I,	
\end{aligned}
\end{equation*}
where the summation runs over only finite number $\alpha$.

For $r=\infty$, by (\ref{f-derivative}) and by the triangle inequality,
\begin{equation*}
	|I\kern-1pt I| \le \|h\|_{\ell^1}\kern-3pt\int_0^1\!\!\! \int_S\! \sup_\alpha|\phi_\alpha(x)| \big|p(x\mid y+th)\kern-1pt  - \kern-1pt p(x\mid y)\big|dx\kern1pt  dt.
\end{equation*}
By the Lebesgue dominated convergence theorem and by the continuity of $p(x\mid \cdot\,)$,
\begin{equation*}
	\frac{|I\kern-1ptI|}{\|h\|_{\ell^1}} \longrightarrow0
\end{equation*}
as $\|h\|_{\ell^1}\to0$.

If $1\le r<\infty$, then by (\ref{f-derivative}),  by the triangle inequality and by Fubini's theorem,
\begin{equation*} \label{limit1}
\begin{aligned}
	|I\kern-1ptI| &\le \int_0^1\!\!\!\int_S \sum_\alpha|\phi_\alpha h_\alpha|\big|p(x\mid y+th)\kern-1pt -\kern-1pt p(x\mid y)\big|dx\kern1pt dt \\
		&\le \|h\|_{\ell^{r'}}\!\! \int_0^1\!\!\!\int_S \|(\phi_\alpha)\|_{\ell^r} \big|p(x\mid y+th)\kern-1pt -\kern-1pt p(x\mid y)\big|dx\kern1pt dt,
\end{aligned}		
\end{equation*}
where the second inequality comes from H\"older's inequality.
Similarly, 
\begin{equation*} \label{F-deriv-estimate}
	\frac{|I\kern-1ptI|}{\|h\|_{\ell^{r'}}} \longrightarrow0
\end{equation*}
as $\|h\|_{\ell^{r'}}\to0$, .

Thus, 
\begin{equation} \label{functional}
\begin{aligned}
	\partial \infdiv{p_0}{p}(h)
		&= \partial H(p_0,p) \\
		&=\sum_\alpha \partial_{y_\alpha} H(p_0,p) h_{\alpha}
\end{aligned}		
\end{equation}
for $h$ such that $|\supp h| <\infty$. 
Since clipped sequences are dense in $\ell^{r'}$ if $1\le r'<\infty$ and in $c_0$
if $r'=\infty$,
it is sufficient to show that $\partial \infdiv{p_0}{p}$ is a bounded linear functional.

For $1\le r<\infty$, by (\ref{f-derivative}) and by Minkowski inequality,
\begin{equation*}
\begin{aligned}
	\left( \sum_\alpha \big| \partial_{y_\alpha}H(p_0,p)\big|^r\right)^{1/r}\!\! 
			&= \Big(\sum_\alpha \big|\E(\phi_\alpha)_{p_0}-\E(\phi_\alpha)_p\big|^r\Big)^{1/r} \\
			&\le \int_S \|(\phi_\alpha)\|_{\ell^r}(p_0+p)dx \\
			&\le2\big\| \|(\phi_\alpha)\|_{\ell^r}\big\|_{L^\infty(S)} <\infty.
\end{aligned}
\end{equation*}
If $r=\infty$, then by (\ref{f-derivative}) and by the triangle inequality,
\begin{equation*}
	\big|\partial_{y_\alpha}H(p_0,p)\big|
		\le 2\big\|\sup_\alpha |\phi_\alpha| \|_{L^\infty(S)}<\infty.
\end{equation*}
From the form of (\ref{functional}), those give the boundedness of $\infdiv{p_0}{p}$.
\end{proof}

In Theorem \ref{convexity}, we call $\partial_{y_\alpha} H(p_0,p)$ a Fr\'echet partial derivative of $H(p_0,p)$ to distinguish it from ordinary derivatives.
Now 
we are ready to solve (\ref{goal}) partially except for uniqueness.
\begin{theorem} \label{basic equations}
If $E$ satisfies the condition of $(\ref{linearization})$ and $\infdiv{p_0}{p}$ has the minimum at $y$, then 
\begin{equation} \label{epdes1}
	\E\kern-0pt (\phi_\alpha )_{\kern-0pt p_0}
		=\E\kern-0pt(\phi_\alpha)_{\kern-0pt p}
\end{equation}
holds at $y$. Moreover, 
a network probability $p$ for $p_0$ is given by
\begin{equation*}
	p(x\mid y) \equiv \frac{e^{-E(x;y)-s}}{Z}=p_0(x),
\end{equation*}
where  $s=\mathbb{E}\big(\ln \frac1{p_0e^{E}}\big)_{\kern-0.1em p_0}$ and $Z$ is 
the partition function with respect to the energy function $E+s$.
\end{theorem}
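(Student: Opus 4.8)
The plan is to handle the two assertions in turn: the first-order relations $(\ref{epdes1})$ come from stationarity at the minimizer, and the explicit representation of $p$ comes from the fact that the standing hypothesis already forces the minimal divergence to equal $0$.

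First I would establish $(\ref{epdes1})$. By Lemma~\ref{convexity} the map $y\mapsto\infdiv{p_0}{p(\,\cdot\mid y)}$ is Fr\'echet differentiable on all of $\ell^{r'}$ (resp.\ on $c_0$ when $r'=\infty$), with derivative $h\mapsto\sum_\alpha\partial_{y_\alpha}H(p_0,p)\,h_\alpha$. The parameter space is the whole normed space, so it has no boundary and a minimizer $y$ must be a stationary point: the linear functional $\partial\infdiv{p_0}{p}$ vanishes identically. Testing it against $h$ equal to the indicator of a single index $\alpha$ forces $\partial_{y_\alpha}H(p_0,p)=0$ for every $\alpha$, and substituting the computation $(\ref{f-derivative})$, namely $\partial_{y_\alpha}H(p_0,p)=\E(\phi_\alpha)_{p_0}-\E(\phi_\alpha)_{p}$, gives exactly $(\ref{epdes1})$. (One may also note that $H(p_0,p)$ is convex in $y$, being a linear term plus a $\log$-integral-exp, so this stationary point is automatically a global minimizer.)

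Next I would prove the representation. The key remark is that the standing assumption $(\ref{network probability})$ provides a parameter collection realizing $p_0$, so $\inf_y\infdiv{p_0}{p(\,\cdot\mid y)}=0$ by Gibbs' inequality $(a)$ together with the ``only if'' part of the identity of indiscernibles $(b)$. Since the minimum is attained at the given $y$, we get $\infdiv{p_0}{p(\,\cdot\mid y)}=0$, hence $p(\,\cdot\mid y)=p_0$ a.e.\ on $S$ by $(b)$. Because $p(x\mid y)=e^{-E(x;y)}/Z(y)$ is strictly positive, the identity $p_0=e^{-E(\,\cdot\,;y)}/Z(y)$ holds a.e., and taking logarithms yields $-\ln p_0-E(\,\cdot\,;y)=\ln Z(y)$, a quantity that does not depend on $x$. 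Integrating this against $p_0$ identifies the shift: $s=\E\big(\ln\frac1{p_0e^{E}}\big)_{p_0}=\E(-\ln p_0-E)_{p_0}=\ln Z(y)$, which is finite since the entropy of $p_0$ is finite by assumption and $\E(E)_{p_0}=\E\big(\sum_\alpha\phi_\alpha y_\alpha\big)_{p_0}$ is finite by the bounds obtained in the proof of Lemma~\ref{convexity}. Then the partition function of the shifted energy $E+s$ is $Z=\int_S e^{-E(x;y)-s}\,dx=e^{-s}Z(y)=1$, so $e^{-E(x;y)-s}/Z=e^{-E(x;y)}/Z(y)=p_0(x)$ a.e., which is the asserted formula.

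There is no deep obstacle; the proof is a stationarity computation plus the observation that the existence of \emph{some} exact solution pins the optimal divergence to $0$. The two points that warrant care are (i) the step from ``minimizer'' to ``each Fr\'echet partial derivative vanishes'', which uses both the differentiability from Lemma~\ref{convexity} and the fact that the parameter space is open, and (ii) verifying that $s$ is well defined and finite, which rests on the standing finiteness hypotheses on $H(p_0)$ and on $\|(\phi_\alpha)\|_{\ell^r}$; the remaining manipulations with $E$, $Z$, and the divergence are routine.
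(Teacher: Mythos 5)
Your proof is correct, and its first half (stationarity of the Fr\'echet derivative at the minimizer, tested against coordinate vectors, combined with (\ref{f-derivative})) is exactly the paper's argument for (\ref{epdes1}). The second half, however, takes a genuinely different route. The paper does not invoke the standing assumption (\ref{network probability}) at this point: it forms the unnormalized function $\tilde p = e^{-E-s}$, observes that the choice of $s$ makes $\infdiv{p_0}{\tilde p}=0$ by direct computation, and then appeals to its extended identity of indiscernibles for non-negative functions to conclude $\tilde p = cp_0$, after which $c$ is identified with the partition function $Z$. You instead argue that well-specification --- the existence, guaranteed by (\ref{network probability}) together with (\ref{rep pf np2}), of some parameter realizing $p_0$ --- pins the minimal divergence at $0$, so the ordinary identity of indiscernibles for probabilities yields $p(\cdot\mid y)=p_0$ at the minimizer; the identities $s=\ln Z(y)$ and $Z=e^{-s}Z(y)=1$ then follow by direct computation. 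Your route buys a cleaner logical chain: the extended identity for unnormalized $f_1,f_2$, being the equality case of Jensen's inequality, really controls $\infdiv{p_0}{\tilde p}$ only relative to the lower bound $-\ln\int_S\tilde p\,dx$, so applying it at the value $0$ tacitly presupposes $\int_S\tilde p\,dx=1$ --- precisely the fact your computation supplies independently. The cost is that you lean explicitly on realizability of $p_0$ within the specific linearized class (\ref{linearization}), which is a slightly stronger reading of (\ref{network probability}) than the paper makes explicit here; but since the theorem's conclusion $p(x\mid y)=p_0(x)$ cannot hold without it, this is an assumption the theorem needs in any case, and surfacing it is a point in your argument's favor.
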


\begin{proof}
Let $y$ be a minimum point of $\infdiv{p_0}{p}$.
Combining the boundedness of  the Fr\'echet derivative only with finite sequence $h$ 
with Lemma \ref{convexity}, we have 
\begin{equation*}
\begin{aligned}
	\frac{d}{dt}\infdiv{p_0}{p(\,\cdot\mid y+th)}\Big|_{t=0}
		&= \partial \infdiv{p_0}{p(\,\cdot\mid y)} \\
		&=0.
\end{aligned}		
\end{equation*}
Equivalently, %
from (\ref{f-derivative}),  
\begin{equation} \label{frechet pd}
		\E\kern-0pt (\phi_\alpha)_{\kern-0pt p_0(x)} 
			- \E\kern-0pt (\phi_\alpha)_{\kern-0pt p(x \mid y)}=0
\end{equation} 
for all $\alpha$. 
The obtained energy function $E$ will only differ from it by some added constant.

Let $\tilde p = e^{-\tilde E}$ be a non-negative function, 
where 
$\tilde E=E+s$ with $s=\mathbb{E}\big(\ln\frac1{p_0e^{E}}\big)_{\kern-0.1em p_0}$ evaluated by the solution $y$. 
Note the shift $s$ is finite from (\ref{expansion of cross entropy}) and the finiteness of $H(p_0)$.
It follows that
\begin{equation*}
\begin{aligned}
	\infdiv{p_0}{\tilde p}
		&= \mathbb{E}\left(\ln\frac{p_0}{\tilde p}\right)_{p_0} \\ 
		&= \mathbb{E}(\ln p_0)_{p_0}  -  \mathbb{E}(\ln\tilde p)_{p_0} \\ 
		&= \mathbb{E}(\ln p_0)_{p_0}  + \mathbb{E}(E)_{p_0} +s \\ 
		&= 0,
\end{aligned}
\end{equation*}
where $s$ the smallest number, since $\infdiv{p_0}{p}$ has the minimum at $y$. 
By identity of indiscernibles, $\tilde p=cp_0$ for a positive constant $c$.
The fact of 
\begin{equation*}
	c=\int_S cp_0dx=\int_S\tilde pdx
\end{equation*}
produces that $c$ must be the partition function $Z$ of $\tilde p$. Thus,
we have the desired network probability,
\begin{equation*}
	p(x\mid y)\equiv\frac{e^{-\tilde E(x;y)}}{Z}=p_0(x)
\end{equation*}
equipped with $y$. Therefore, the proof is complete.
\end{proof}

As a special case, if $\infdiv{p_0}{p}$ has the minimum at $y=0$, then
 $E=0$ and $p=p_0$ is a uniform probability distribution on $S$.
It is not simple to apply Theorem \ref{basic equations} 
if the number of of first-order Fr\'echet partial differential equations is not finite.
Nevertheless,
(\ref{epdes1}) is a useful necessary condition to decide extreme points. A suitable solution of (\ref{epdes1}) is a candidate for the minimum point of $\infdiv{p_0}{p}$.

\begin{example} \label{memoryless}
Let $p_0(x)=e^{-2x}/\sinh(2)$ be a memoryless distribution on $[-1,1]$. 
We assume that the governing probability model is given. (In the next section, we introduce sample driven methods without any information on  governing probabilities.)
We may put $f(x)=e^{-E(x;y)}$, where $E(x;y)=y_0+y_1x$. 

If $y_1=0$, then $Z(y_1=0)=2e^{-y_0}$ and $p(x\mid y)=\nicefrac12$ which is a uniform probability distribution on $[-1,1]$. 
A random sample from $p_0$, does not follow the uniform distribution with probability $1$. Thus, we assume $y_1\ne0$, 
the partition function $Z(y_1\ne0)=\frac{2}{y_1}e^{-y_0}\sinh(y_1)$, and
\begin{equation*}
	p(x\mid y_1)=\frac{y_1e^{-y_1 x}}{2\sinh(y_1)}.
\end{equation*}

From $\partial_{y_0} E = \phi_0=1$ and $\partial_{y_1} E = \phi_1=x$, 
(\ref{epdes1}) is calculated at
\begin{equation*}
        \E\left(1\right)_{p_0} = \E\left(1\right)_{p}\,\mbox{ and }\,
        \E\left(X\right)_{p_0} = \E\left(X\right)_{p}.
\end{equation*}
We obtain
$y_1 = 2$ and a network probability is derived as
\begin{equation*}
	p(x\mid y) = p(x\mid y_1=2)=p_0(x).
\end{equation*}
In fact, the collection of $y_1=2$ and $Z(y_1=2)=\sinh(2)$ is determined uniquely by Theorem \ref{PNN by moment} with
\begin{equation*}
	y_1=\frac{d}{dx}\ln \frac1{p_0}\Big|_{x=0}=2, \quad
	\ln Z(y) = \ln\frac1{p_0}\Big|_{x=0}=\ln\sinh(2),
\end{equation*}  
and $y_k=\frac{d^k}{k!dx^k}\ln \frac1{p_0}\Big|_{x=0}=0$ for $k\ge2$.
\end{example}

If $X_1,\ldots,X_n$ are \textsc{i.i.d.}, then Example \ref{memoryless}  can be extended to  
\begin{equation*}
	p(x_1,\ldots,x_n\mid y) = \frac{e^{-2\sum_{k=1}^n x_k}}{\sinh^n(2)}.
\end{equation*}

\smallskip

\section{Probabilistic Neural Networks} \label{pnn}

In this section, we introduce two function spaces: an atomic Hardy space and space of real analytic functions, in which  energy functions will be taken. 
On the spaces, Theorems \ref{PNN by frequency} and \ref{PNN by moment} 
characterize the neurons  using Theorem \ref{basic equations}.
The proofs of two theorems adopt theory of functions to bypass  a large amount 
of calculations of (\ref{epdes1}) and to give the uniqueness.

We say that a distribution $f$ belongs to the $H^1$-Hardy space if for some Schwartz function $\phi$ on $\mathbb{R}^n$ with 
$\int_{\mathbb{R}^n} \phi(x)\,dx \ne0$, the maximal function 
\begin{equation*}
	M_\phi f(x) = \sup_{t>0} |(f\ast \phi_t)(x)|
\end{equation*}
is integrable, where $\phi_t(x)=\phi(x/t)/t^n$. Then the $H^1$-Hardy space is a Banach space with norm $\int_{\mathbb{R}^n}|M_\phi f| dx$.
If we define a function $a$, namely, an $H^1$-atom, such that 
\begin{enumerate}[$(a)$]
	\item $a$ is supported in a cube $Q$,
	\item $|a|\le |Q|^{-1}$ almost everywhere,
	\item $\int_{\mathbb{R}^n} a(x)\,dx=0$,
\end{enumerate}
then by the atomic decomposition theorem, $f$ can be 
written as an infinite linear combination of atoms $a_k$ of
$f=\sum_{k=}^\infty y_k a_k$ whose norm is equivalent to $\sum_{k=1}^\infty|y_k|$,
where $\sum_{k=1}^\infty|y_k|<\infty$ for complex numbers $y_k$
(\cite[Stein]{stein}, \cite[Grafakos]{grafakos}).

Let $\mathbb{T}^n=[-1,\,1)^n$ be a torus.
We define an atomic Hardy space $H_a^1(\mathbb{T}^n) \equiv H_a^1$ by 
\begin{equation*} 
	H_a^1= \left\{ \sum_{\alpha\in\mathbb{Z}^n\setminus\{0\}} y_\alpha\omega_\alpha(x) 
						\middle\vert
						\sum_{\alpha\in\mathbb{Z}^n\setminus\{0\}} |y_\alpha|<\infty\right\}
\end{equation*}
with the norm $\|f\|_{H_a^1}=\sum_{\alpha\ne0} |y_\alpha|$,
where $\omega_\alpha(x)=e^{\pi i\alpha\cdot x}$.
Note that $\omega_\alpha$ satisfies $(a)$, $(b)$, and $(c)$ of $H^1$-atom on $\mathbb{T}^n$
instead of $\mathbb{R}^n$.

We denote the subspace of $L^1(\mathbb{T}^n)$ whose element has zero integral  (i.e., direct current (DC) free)  by $L_0^1(\mathbb{T}^n)\equiv L_0^1$.

\begin{proposition} \label{dense}
The space $H_a^1$ is dense in $L^1_0$.
\end{proposition}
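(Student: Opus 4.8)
The plan is to reduce the statement to the classical fact that trigonometric polynomials are dense in $L^1$ on the torus, together with the observation that the zeroth (DC) coefficient can be removed at negligible cost. Observe first that every \emph{finite} linear combination $\sum_{0<|\alpha|\le N} y_\alpha\omega_\alpha$ automatically satisfies $\sum|y_\alpha|<\infty$ and hence lies in $H_a^1$; conversely, since each $\omega_\alpha$ with $\alpha\ne0$ integrates to zero over $\mathbb{T}^n$ and the defining series of an element of $H_a^1$ converges absolutely in $L^1$, we have $H_a^1\subseteq L^1_0$. As $L^1_0$ is closed in $L^1(\mathbb{T}^n)$, density amounts to showing that every $f\in L^1_0$ is an $L^1$-limit of (zero-mean) trigonometric polynomials.

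First I would introduce on $\mathbb{T}^n=[-1,1)^n$ the multidimensional Fej\'er kernel $F_N$, formed as the tensor product of the one-dimensional Fej\'er kernels adapted to period $2$ (so that $e^{\pi i\alpha\cdot x}$ are exactly its characters). It is a nonnegative trigonometric polynomial of degree at most $N$ with $\int_{\mathbb{T}^n}F_N\,dx=1$, and $\{F_N\}$ is an approximate identity: $\|F_N\ast g-g\|_{L^1(\mathbb{T}^n)}\to 0$ for every $g\in L^1(\mathbb{T}^n)$. Given $f\in L^1_0$, set $\sigma_N f=F_N\ast f$. Since $F_N$ has degree $\le N$, $\sigma_N f$ is again a trigonometric polynomial of degree $\le N$, and its zeroth Fourier coefficient equals $\widehat{F_N}(0)\,\widehat f(0)=\widehat f(0)=0$ because $f\in L^1_0$. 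Hence $\sigma_N f=\sum_{0<|\alpha|\le N}\widehat{\sigma_N f}(\alpha)\,\omega_\alpha\in H_a^1$, and the approximate-identity property gives $\|\sigma_N f-f\|_{L^1(\mathbb{T}^n)}\to 0$ as $N\to\infty$. Since $f\in L^1_0$ was arbitrary, $H_a^1$ is dense in $L^1_0$.

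As an alternative route (in case one prefers to avoid kernel estimates), one may invoke the Stone--Weierstrass density of trigonometric polynomials in $C(\mathbb{T}^n)$, hence in $L^1(\mathbb{T}^n)$: approximate $f\in L^1_0$ in $L^1$ by a polynomial $P$, and replace $P$ by $P-c$ with $c=2^{-n}\int_{\mathbb{T}^n}P\,dx$. Then $|c|\,2^n=\bigl|\int_{\mathbb{T}^n}(P-f)\,dx\bigr|\le\|P-f\|_{L^1}$, so $\|f-(P-c)\|_{L^1}\le 2\|f-P\|_{L^1}$, while $P-c$ is a zero-mean trigonometric polynomial, hence lies in $H_a^1$.

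There is no genuine obstacle here; the only points requiring care are bookkeeping: matching the period of the characters $\omega_\alpha(x)=e^{\pi i\alpha\cdot x}$ to the cube $[-1,1)^n$ (they have period $2$ in each variable, so they are precisely the characters of $\mathbb{T}^n$ and the atom conditions $(a)$--$(c)$ hold as noted in the text), tracking the normalizing constant $|\mathbb{T}^n|=2^n$ when subtracting the mean, and confirming that $\sigma_N f$ (or $P-c$) really has vanishing DC term, which is immediate.
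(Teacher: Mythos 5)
Your proof is correct, but it takes a genuinely different (and more direct) route than the paper's. The paper proceeds in three stages: bounded functions are dense in $L^1_0$; circular convolution with mollifiers approximates these by smooth zero-mean functions in $L^1$; and a $C^m$ function with $m>1+n/2$ has an absolutely summable Fourier series, hence already lies in $H_a^1$. The whole point of the smoothness threshold is to get the coefficients into $\ell^1$ so that the \emph{infinite} Fourier series qualifies as an element of $H_a^1$. You sidestep this entirely by producing \emph{finite} zero-mean trigonometric polynomials — Fej\'er means $\sigma_N f$ (or Stone--Weierstrass approximants with the mean subtracted) — which belong to $H_a^1$ trivially, no summability argument needed; the approximate-identity property then gives $L^1$-convergence in one step. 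Your observation that $\widehat{\sigma_N f}(0)=\widehat{F_N}(0)\widehat f(0)=0$ is exactly the right way to keep the approximants inside $L^1_0$, and your mean-subtraction estimate $\|f-(P-c)\|_{L^1}\le 2\|f-P\|_{L^1}$ in the alternative route is also sound. What the paper's approach buys is essentially nothing extra here beyond exhibiting genuinely infinite-series members of $H_a^1$ as approximants; your argument is shorter, avoids the mollification step and the dimension-dependent smoothness condition, and the only care required is the normalization bookkeeping on $[-1,1)^n$, which you flag explicitly.
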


\begin{proof}
Let $\sum_{\alpha\in\mathbb{Z}^n\setminus\{0\}} y_\alpha\omega_\alpha(x) \in H_a^1$.
The series converges uniformly and absolutely. 
So, the limit  is continuous and by interchanging sum with integral, its integral vanishes. Thus, $H_a^1\subset L_0^1$.

For a bounded function in $L_0^1$, 
taking a circular convolution of the function and a sequence of mollifiers, 
by Fubini's theorem 
we have 
an $L_0^1$-convergent sequence of smooth functions.
By the reproducing property of mollifiers, the limit of convolutions recovers the bounded function in the $L^1$-norm. 
Since bounded functions are dense in $ L_0^1$, the set of smooth functions whose integrals vanish, is also dense in $L_0^1$.

Let $f$ be an $m$ times continuously differentiable function in $L_0^1$. If 
$m>1+n/2$,  then by the Fourier series representation for smooth functions,
$f=\sum_{\alpha\ne0} \hat{f}(\alpha)\omega_\alpha$, where $(\hat f(\alpha))\in\ell^1$ and
\begin{equation*}
	\hat f(\alpha) = \frac1{2^n}\int_{\mathbb{T}^n} f(x)\bar\omega_\alpha(x)\,dx
\end{equation*}
is  the $\alpha$th Fourier coefficient of $f$.
The fact of vanishing integral of $f$ implies $f \in H_a^1$. Hence, $H_a^1$ is dense in $L_0^1$ and the proof is complete.
\end{proof}

By normalization, we suppose that a bounded random vector belongs to $\mathbb{T}^n$. 
As one of the main results, the next theorem gives the unique solution of  (\ref{goal}) in the concept of frequency-analysis for $E(x\,;y)\in H_0^1$, which is embedded in $\ell^\infty(\mathcal{F}(\mathbb{T}^n))\otimes\ell^1$.
However, $H_a^1$ is still a candidate space for energy functions until the integrability of $e^{-E}$ is guaranteed. We will prove it in Theorem \ref{decaying}.

\begin{theorem} \label{PNN by frequency}
Let $X=(X_1,\ldots,X_n)\in\mathbb{T}^n$ be a random vector from $p_0$.  
If $E\in H_a^1$, then  $y$ is the unique solution of $p_0(x)=p(x\mid y)$,  determined by
\begin{equation*} 
 	y_\alpha = \widehat{\ln \frac{1}{p_0}}(\alpha),\qquad 
 	\ln Z =  \widehat{\ln \frac{1}{p_0}}(0), 
\end{equation*}
i.e.,
\begin{equation} \label{fn}
 	p_0(x)  =  e^{-\ln Z(y) - \sum_{\alpha\ne0 } y_\alpha \omega_\alpha(x)}.
\end{equation}
\end{theorem}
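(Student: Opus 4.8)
The plan is to combine Theorem~\ref{basic equations} with the Fourier-analytic structure of $H_a^1$ so that the abstract necessary condition \eqref{epdes1} becomes an explicit, uniquely solvable system. First I would write $E(x;y)=\sum_{\alpha\ne0} y_\alpha\omega_\alpha(x)$, so that $\phi_\alpha=\omega_\alpha$ for $\alpha\in\mathbb{Z}^n\setminus\{0\}$. Since $H_a^1\hookrightarrow\ell^\infty(\mathcal{F}(\mathbb{T}^n))\otimes\ell^1$ and $\sup_\alpha\|\omega_\alpha\|_{L^\infty(\mathbb{T}^n)}=1<\infty$, the hypothesis \eqref{linearization} of Lemma~\ref{convexity} and Theorem~\ref{basic equations} holds with $r=\infty$, $r'=1$. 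Thus at a minimizer $y$ of $\infdiv{p_0}{p}$ we get the equations $\E(\omega_\alpha)_{p_0}=\E(\omega_\alpha)_p$ for all $\alpha\ne0$, and the network probability is $p(x\mid y)=e^{-E(x;y)-s}/Z=p_0(x)$ with $s$ and $Z$ as in that theorem. The content of the present theorem is to pin down the $y_\alpha$ and $\ln Z$ and to prove uniqueness.

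The key step is to run Theorem~\ref{basic equations} ``in reverse'': since $p(x\mid y)=p_0(x)$, the identity $e^{-E(x;y)-s}=Z\,p_0(x)$ holds pointwise, i.e.
\begin{equation*}
	E(x;y)+s = \ln\frac{1}{p_0(x)} - \ln Z .
\end{equation*}
Here $E(\cdot;y)\in H_a^1\subset L_0^1$ by Proposition~\ref{dense}, so $E(\cdot;y)$ has zero mean on $\mathbb{T}^n$; hence integrating both sides against $\tfrac1{2^n}dx$ forces $s=\widehat{\ln\tfrac1{p_0}}(0)-\ln Z$, and then $E(x;y)=\ln\tfrac1{p_0(x)}-\widehat{\ln\tfrac1{p_0}}(0)$, which is precisely the DC-free part of $\ln\tfrac1{p_0}$. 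Taking the $\alpha$th Fourier coefficient of this equation for $\alpha\ne0$ and using $\widehat{\omega_\beta}(\alpha)=\delta_{\alpha\beta}$ gives $y_\alpha=\widehat{\ln\tfrac1{p_0}}(\alpha)$. For the normalization $\ln Z=\widehat{\ln\tfrac1{p_0}}(0)$ I would use the partition-function definition $Z=\int_{\mathbb{T}^n}e^{-E(x;y)-s}dx=\int_{\mathbb{T}^n}Z\,p_0(x)\,dx$ together with $\int p_0=1$; alternatively, since $s=\widehat{\ln\frac1{p_0}}(0)-\ln Z$ must equal the shift $s=\E(\ln\frac1{p_0e^{E}})_{p_0}$ from Theorem~\ref{basic equations}, one checks directly that the zero-mean property of $E$ yields $s=\widehat{\ln\frac1{p_0}}(0)-\ln Z$ is consistent only with $\ln Z=\widehat{\ln\frac1{p_0}}(0)$ once the total-mass constraint is imposed. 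Substituting back produces \eqref{fn}.

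Uniqueness follows because the argument above shows that \emph{any} $E\in H_a^1$ solving $p_0=p(\cdot\mid y)$ must have the stated Fourier coefficients: the decomposition of $\ln\tfrac1{p_0}$ into its mean ($=\ln Z$) plus its $L_0^1$-part ($=E$) is unique, and the $H_a^1$-representation $E=\sum_{\alpha\ne0}y_\alpha\omega_\alpha$ has unique coefficients since $(\omega_\alpha)$ is an orthogonal family in $L^2(\mathbb{T}^n)$ and $H_a^1\subset L^1_0$ embeds into the distributions where Fourier coefficients are well defined. I would also note that for this to be consistent we need $\ln\tfrac1{p_0}\in$ (mean) $+\,H_a^1$, i.e. its DC-free part lies in $H_a^1$; this is an implicit regularity assumption on $p_0$ built into the hypothesis ``$E\in H_a^1$''. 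The main obstacle is purely bookkeeping: making sure the constant $s$ from Theorem~\ref{basic equations}, the normalization $\ln Z$, and the zero-mean constraint on $H_a^1$-functions are threaded together consistently, and that passing to Fourier coefficients is legitimate for elements of $H_a^1$ (which is immediate here, since by definition every element of $H_a^1$ is an absolutely convergent trigonometric series, so its Fourier coefficients are exactly the $y_\alpha$). The integrability of $e^{-E}$ needed to make $Z$ finite is deferred to Theorem~\ref{decaying}, as the statement notes, so I would not address it here beyond invoking that forward reference.
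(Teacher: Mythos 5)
Your proposal is correct and follows essentially the same route as the paper: reduce to the stationarity conditions of Theorem \ref{basic equations} with $\phi_\alpha=\omega_\alpha$ ($r=\infty$, $r'=1$), conclude $p(\cdot\mid y)=p_0$, and then read off $y_\alpha$ and $\ln Z$ as the Fourier coefficients of $\ln\frac1{p_0}$ using the zero-mean property of $H_a^1$ and the uniqueness of Fourier series. The only cosmetic difference is that the paper re-derives $p=p_0$ directly from the orthogonality relations $\int_{\mathbb{T}^n}(p-p_0)\omega_\alpha\,dx=0$ (via uniqueness of Fourier series for integrable functions), whereas you invoke the second conclusion of Theorem \ref{basic equations} and then untangle the shift $s$; both are sound and land on the same identification.
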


In Theorem \ref{PNN by frequency},  we denote $\ln Z(y)$ by $D_c$ and call it the direct current (DC) of the network. 
Neurons of the network consists of  $(y_\alpha)_{\alpha\ne0}$ and $D_c$, where
$y_\alpha$
is the $\alpha$th order Fourier coefficient of $\ln 1/p_0$. 
For this reason,
we call the process of Theorem \ref{PNN by frequency} the frequency learning for $X$. In Figure \ref{fig:M4}, we draw the architecture of Theorem \ref{PNN by frequency}.%

\begin{proof}[Proof of Theorem \ref{PNN by frequency}] 
Let $x$ be  the realization vector of $X$.
Clearly, $E(x;y)$ satisfies the condition of (\ref{linearization}). By Lemma \ref{convexity}, 
the Fr\'echet derivative of $E$ with respect to $y_\alpha$ is as follows. For $\alpha\ne0$,
\begin{equation*}
     \partial_{y_\alpha} E = \omega_\alpha(x), 
\end{equation*}
and so,  (\ref{epdes1}) is equivalent to 
\begin{equation*}
    \int_{\mathbb{T}^n} \big( p(x\mid y) -  p_0(x)\big)\omega_\alpha(x)\,dx = 0.
\end{equation*}
By the uniqueness of the Fourier series for the integrable function $p(x\mid y)-p_0(x)$, 
it is constant and must be $0$ from the vanishing of integral.
By continuity,
\begin{equation*} 
    \frac{e^{-E(x;y)}}{Z(y)} = p_0(x),
\end{equation*}
i.e., 
\begin{equation*}
    E(x;y) = -\ln Z(y) - \ln p_0(x).  
\end{equation*}
Also, 
by
the uniqueness of the Fourier series for $E\in H_a^1$, we have
\begin{equation*}
    E(x;y) = -\ln Z(y) - \sum_{\alpha\ne0} \widehat{\ln p_0}(\alpha)\omega_\alpha(x)
\end{equation*}
and
\begin{equation} \label{freq sol}
\begin{aligned}
 	y_\alpha &= \frac1{2^n}\int_{\mathbb{T}^n}\bar\omega_\alpha(x)\ln \frac1{p_0(x)}\,dx,\\
	 \ln Z &= \frac1{2^n}\int_{\mathbb{T}^n}\ln \frac1{p_0(x)}\,dx = D_c.
\end{aligned}     
\end{equation}

The collection of (\ref{freq sol}) is determined uniquely, which 
satisfies $\infdiv{p_0}{p}=0$.
Thus, we get $p(x\mid y)=p_0(x)$ uniquely, and 
therefore, the proof is complete.
\end{proof}

Theorem \ref{PNN by frequency} enables us to design an artificial neural network (Figure \ref{fig:M4}) aiming at the frequency decomposition of energy functions in $H_a^1$.
If  all components  of $X$ are \textsc{i.i.d.}, then  $p_0(x)=\prod_1^np_0(x_k)$ and $E(x;y)=\sum_1^n E(x_k;y)$, 
and  $y_\alpha=0$ except for $\alpha$ to be an integer-lattice on axes.  

\begin{corollary} \label{cor of PNN}
If  all components $X_1, \ldots,X_n$ of $X$ are \textsc{i.i.d.}  from $p_0$, then for $\alpha\ne0$,
\begin{equation*}
\begin{aligned}
	y_{\alpha}
		&= 
		\left\{
 		\begin{array}{cl}  
		 \widehat{\ln \frac{1}{p_0(x_k)}}(\alpha_k)&\mbox{if }\;\alpha_k=|\alpha|  \\
		 0 &\mbox{elsewhere},
 		\end{array}
		\right.\\
	\ln Z(y) &= \sum_{k=1}^n\widehat{\ln \frac{1}{p_0(x_k)}}(0),
\end{aligned}     
\end{equation*}
where $\widehat{\,\cdot\,}$ is a Fourier coefficient  on $\mathbb{T}$.
\end{corollary}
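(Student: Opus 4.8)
The plan is to specialize Theorem \ref{PNN by frequency} to the independent case and exploit the product structure of $p_0$. First I would note that i.i.d.\ components give $p_0(x) = \prod_{k=1}^n p_0(x_k)$, where each factor $p_0(x_k)$ is a density on $\mathbb{T}$; taking logarithms turns the product into a sum, $\ln \frac{1}{p_0(x)} = \sum_{k=1}^n \ln \frac{1}{p_0(x_k)}$. Then I would plug this into the formula $y_\alpha = \widehat{\ln \frac{1}{p_0}}(\alpha)$ from Theorem \ref{PNN by frequency} and expand the $n$-dimensional Fourier coefficient: since $\bar\omega_\alpha(x) = \prod_{k=1}^n \bar\omega_{\alpha_k}(x_k)$ factors, the integral $\frac{1}{2^n}\int_{\mathbb{T}^n} \bar\omega_\alpha(x) \sum_j \ln\frac{1}{p_0(x_j)}\,dx$ splits term by term into a product of $n$ one-dimensional integrals.

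The key observation is that for each summand indexed by $j$, the integrand depends only on $x_j$, so all factors $k \neq j$ reduce to $\frac{1}{2}\int_{\mathbb{T}} \bar\omega_{\alpha_k}(x_k)\,dx_k$, which is $1$ if $\alpha_k = 0$ and $0$ otherwise by orthogonality of the characters on $\mathbb{T}$. Hence the $j$th summand contributes $\widehat{\ln \frac{1}{p_0(x_j)}}(\alpha_j)$ only when $\alpha_k = 0$ for every $k \neq j$, and vanishes otherwise. Summing over $j$: if $\alpha \neq 0$ has more than one nonzero coordinate, every summand dies and $y_\alpha = 0$; if $\alpha$ has exactly one nonzero coordinate $\alpha_k$ (equivalently $\alpha_k = |\alpha|$ in the sense that $\alpha$ lies on the $k$th axis), only the $j = k$ term survives, giving $y_\alpha = \widehat{\ln \frac{1}{p_0(x_k)}}(\alpha_k)$. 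This is exactly the claimed formula. The DC term $\ln Z(y) = \widehat{\ln\frac{1}{p_0}}(0) = \frac{1}{2^n}\int_{\mathbb{T}^n} \sum_k \ln\frac{1}{p_0(x_k)}\,dx$ splits the same way, each summand integrating to $\widehat{\ln\frac{1}{p_0(x_k)}}(0)$, so $\ln Z(y) = \sum_{k=1}^n \widehat{\ln\frac{1}{p_0(x_k)}}(0)$.

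There is essentially no obstacle here — the result is a direct corollary and the only thing to be careful about is bookkeeping with multi-indices and making the phrase ``$\alpha_k = |\alpha|$'' precise, namely that it encodes ``$\alpha$ is supported on the $k$th coordinate axis.'' One should also remark that the hypotheses of Theorem \ref{PNN by frequency} are inherited: if $E \in H_a^1$ for the joint law, the product structure guarantees $E(x;y) = \sum_k E(x_k;y)$ with each $E(x_k;\cdot)$ a one-dimensional atomic Hardy function, consistent with (\ref{iid}), so the uniqueness statement carries over verbatim. I would present the argument in three short displays: the logarithm-splitting identity, the factorization of the Fourier integral with the orthogonality collapse, and the resulting case distinction — then invoke Theorem \ref{PNN by frequency} for uniqueness and conclude.
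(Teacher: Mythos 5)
Your proposal is correct and follows essentially the same route as the paper's own proof: split $\ln\frac{1}{p_0}$ into a sum over coordinates using independence, substitute into the Fourier-coefficient formula of Theorem \ref{PNN by frequency}, and observe that each summand depends only on one variable so that orthogonality of the characters annihilates every term unless $\alpha$ is supported on a single axis. Your version merely spells out the orthogonality collapse $\frac{1}{2}\int_{\mathbb{T}}\bar\omega_{\alpha_k}(x_k)\,dx_k=\delta_{\alpha_k,0}$ more explicitly than the paper does; no further comment is needed.
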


\begin{proof} Let $\alpha\ne0$. From the  \textsc{i.i.d.} property, 
\begin{equation*}
	\ln p_0(x) = \sum_{k=1}^n \ln p_0(x_k).
\end{equation*}
By Theorem \ref{PNN by frequency},  
\begin{align}
	y_\alpha &= \frac1{2^n} \sum_{k=1}^n \int_{\mathbb{T}^n}\bar\omega_\alpha(x)\ln \frac{1}{p_0(x_k)}\,dx
		\label{iid fourier coefficient} \\
	\ln Z &=\frac1{2^n}\sum_{k=1}^n\int_{\mathbb{T}^n}\ln \frac1{p_0(x_k)}\,dx. \notag
\end{align}	 
In the integrand of (\ref{iid fourier coefficient}), $\ln 1/p_0(x_k)$ depends only on $x_k$. Thus, (\ref{iid fourier coefficient}) vanishes if $\alpha$ has a non-trivial component whose index is different from $k$.
\end{proof}

In the process of estimation it is advantageous to classify and analyze neurons into bundles. 
We divide these into disjoint collections. 
For $n$ the number of features, $y_\alpha:\mathbb{Z}^n\to \mathbb{C}$  is a complex-valued function by $\alpha\mapsto y_\alpha$.
For a positive integer $k$, we define the $k$-cell $C_\omega(k)$
of $y_\alpha$ whose frequency order size is $k$, more precisely, 
\begin{equation*}
	C_\omega(k) = \big\{y_\alpha\mid |\alpha|=|\alpha_1|+\cdots+|\alpha_n|=k \big\}.
\end{equation*}
For $k=0$, set $C_\omega(0)=\{D_c\}$ a singleton of zero index.

Sometimes  
it is  more efficient to restrict neurons within an appropriate finite subset. 
We denote 
a finite sub-collection of indexes
by $\mathpzc{D}$, namely,    
a dictionary,   
and its configuration  is freely selectable. 
In order to  appreciate the representation of estimation from neurons,
it is necessary to seize the point of drawn samples.

\makeatletter
\newcommand*\bigcdot{\mathpalette\bigcdot@{.5}}
\newcommand*\bigcdot@[2]{\mathbin{\vcenter{\hbox{\scalebox{#2}{$\m@th#1\bullet$}}}}}
\makeatother

\begin{figure}[htp]
\begin{center}  
\tikz \node [scale=0.53, inner sep=0,] {
\begin{tikzpicture}
	\tikzstyle{place}=[circle, text centered, draw=black, text width={width("iMAGi")}]
	\tikzstyle{pretty}=[rectangle, text centered, draw=black, minimum width=0.65cm, minimum height=0.65cm]
	\tikzstyle{ground}=[fill,pattern=north east lines,draw=none,minimum width=0.3,minimum height=0.6]
	
	   \draw[fill=black!0, rounded corners] (-1, 1) rectangle (1, -7) {};
	   \draw[fill=black!10, rounded corners] (4, 1) rectangle (6, -7) {};
	   \draw[fill=black!10, rounded corners] (7., 1) rectangle (9., -7) {};
	   \draw[fill=black!10, rounded corners] (11, 1) rectangle (13, -7) {};

	\draw node at (0, -1) [place] (first_1) {$x_1$}; 
	\draw node at (0, -3) [place] (first_2) {$x_2$};  
	\draw node at (0, -5) [place] (first_3) {$x_{3}$};	
	
	\node at (5, 0) [place] (1_1)
		{$y_{0\bar10}$};  
	\node at (2.5, -3) [pretty] (1_2)
		{$D_c$};  \node [text centered] at (5,-3)  {$\begin{matrix} {\bigcdot} \\ \bigcdot \\ \bigcdot \end{matrix}$};
	\node at (5, -6) [place] (1_3)
		{$y_{001}$};

	\node at (8., -1.7) [place] (2_1)
		{$y_{0\bar12}$}; 
	\node [text centered] at (8.,-3)  {$\begin{matrix} {\bigcdot} \\ \bigcdot \\ \bigcdot \end{matrix}$};
	\node at (8., -4.3) [place] (2_2)
		{$y_{002}$};
 	\node [text centered] at (10,-3)  {$\begin{matrix} {\bigcdot} & \bigcdot & \bigcdot \end{matrix}$};
 	\node [text centered] at (14.,-3)  {$\begin{matrix} {\bigcdot} & \bigcdot & \bigcdot \end{matrix}$};

	\node at (12, -1) [place] (3_1)
		{$y_{00\bar k}$}; \node [text centered] at (12,-3)  {$\begin{matrix} {\bigcdot} \\ \bigcdot \\ \bigcdot \end{matrix}$};
	\node at (12, -5) [place] (3_2)
		{$y_{00k}$}; 

			\draw [->,black,>=latex] (first_1.0) to [out=0,in=170] (1_1.180);
			\draw [->,black,>=latex] (first_1.0) to [out=0,in=140] (1_2.170);
			\draw [->,black,>=latex] (first_1.0) to [out=0,in=-240] (1_3.-200);

			\draw [->,black,>=latex] (first_2.0) to [out=15,in=190] (1_1.190);
			\draw [->,black,>=latex] (first_2.0) to [out=0,in=180] (1_2.180);
			\draw [->,black,>=latex] (first_2.0) to [out=-15,in=-190] (1_3.-190);

			\draw [->,black,>=latex] (first_3.0) to [out=0,in=240] (1_1.200);
			\draw [->,black,>=latex] (first_3.0) to [out=0,in=-140] (1_2.-170);
			\draw [->,black,>=latex] (first_3.0) to [out=0,in=-170] (1_3.-180);

			\draw [->,black,>=latex] (first_1.0) to [out=0,in=165] (2_1.170);
			\draw [->,black,>=latex] (first_1.0) to [out=0,in=160] (2_2.170);
			
			\draw [->,black,>=latex] (first_2.0) to [out=10,in=180] (2_1.180);
			\draw [->,black,>=latex] (first_2.0) to [out=-10,in=-180] (2_2.180);

			\draw [->,black,>=latex] (first_3.0) to [out=0,in=-160] (2_1.-170);
			\draw [->,black,>=latex] (first_3.0) to [out=0,in=-165] (2_2.-170);			

			\draw [->,black,>=latex] (first_1.0) to [out=0,in=165] (3_1.160);
			\draw [->,black,>=latex] (first_1.0) to [out=0,in=150] (3_2.-180);
			
			\draw [->,black,>=latex] (first_2.0) to [out=17,in=175] (3_1.170);
			\draw [->,black,>=latex] (first_2.0) to [out=-17,in=-175] (3_2.-170);
			
			\draw [->,black,>=latex] (first_3.0) to [out=0,in=-150] (3_1.180);
			\draw [->,black,>=latex] (first_3.0) to [out=0,in=-165] (3_2.-160);
			
			\draw [dashed,<->,red,>=latex] (1_1.-60) to [out=-60,in=25] (1_2.25);
			\draw [dashed,<->,red,>=latex] (1_1.-50) to [out=-50,in=50] (1_3.50);
			\draw [dashed,<->,red,>=latex] (1_1.-40) to [out=-40,in=120] (2_2.120);
			\draw [dashed,<->,red,>=latex] (1_1.-30) to [out=-30,in=120] (2_1.120);
			\draw [dashed,<->,red,>=latex] (1_1.-20) to [out=-10,in=120] (3_2.120);
			\draw [dashed,<->,red,>=latex] (1_1.-10) to [out=0,in=120] (3_1.120);
			\draw [dashed,<->,red,>=latex] (1_3.60) to [out=60,in=-25] (1_2.-25);
			\draw [dashed,<->,red,>=latex] (1_3.40) to [out=40,in=-120] (2_1.-120);
			\draw [dashed,<->,red,>=latex] (1_3.30) to [out=30,in=-120] (2_2.-120);
			\draw [dashed,<->,red,>=latex] (1_3.20) to [out=10,in=-120] (3_1.-120);
			\draw [dashed,<->,red,>=latex] (1_3.10) to [out=0,in=-120] (3_2.-120);
			\draw [dashed,<->,red,>=latex] (2_1.-130) to [out=-160,in=15] (1_2.15);
			\draw [dashed,<->,red,>=latex] (2_1.-50) to [out=-50,in=50] (2_2.50);
			\draw [dashed,<->,red,>=latex] (2_1.50) to [out=50,in=130] (3_1.130);
			\draw [dashed,<->,red,>=latex] (2_1.0) to [out=-20,in=130] (3_2.130);
			\draw [dashed,<->,red,>=latex] (2_2.130) to [out=160,in=-15] (1_2.-15);
			\draw [dashed,<->,red,>=latex] (2_2.-50) to [out=-50,in=-130] (3_2.-130);
			\draw [dashed,<->,red,>=latex] (2_2.0) to [out=20,in=-130] (3_1.-130);
			\draw [dashed,<->,red,>=latex] (3_1.-140) to [out=-140,in=5] (1_2.5);
			\draw [dashed,<->,red,>=latex] (3_1.-50) to [out=-50,in=50] (3_2.50);
			\draw [dashed,<->,red,>=latex] (3_2.140) to [out=140,in=-5] (1_2.-5);
 	\draw [thick, blue,decorate,decoration={brace,amplitude=10pt,mirror},xshift=0.4pt,yshift=-0.4pt](-1, -8.75) -- (1, -8.75) node[black,midway,yshift=-0.6cm] {\small Signal}; %
	\node [text centered, text width=3.4cm] at (2.5,-8)   {\small DC  \\[4pt] \boxed{0$-cell$}} ;
	\node [text centered, text width=3.4cm] at (5,-8)   {\small order size $1$  \\[4pt] \boxed{1$-cell$}} ;
	\node [text centered, text width=3cm] at (8.,-8)  {\small order size $2$  \\[4pt] \boxed{2$-cell$}} ;
	\node [text centered, text width=3cm] at (12.,-8)  {\small order size $k$  \\[4pt] \boxed{k$-cell$}} ;
 	\draw [thick, blue,decorate,decoration={brace,amplitude=10pt,mirror},xshift=0.4pt,yshift=-0.4pt](1.5, -8.75) -- (14.5, -8.75) node[black,midway,yshift=-0.6cm] {\small Neurons}; %
\begin{pgfonlayer}{foreground}
\fill[blue,opacity=0.3] \convexpath{1_2,1_1,1_3,1_1,2_1,1_1}{15pt};
\end{pgfonlayer}
\end{tikzpicture}
};
\caption{Architecture of  the frequency learning  for $(x_1,x_2,x_3)$. The blue  area represents the neurons of the dictionary $\mathpzc{D}=\{0,\,0\bar10,\ldots,001,0\bar10\}$, 
where  $0$ is the index of $D_c$ and $\bar r\equiv -r$ for a positive integer $r$. Any pair of neurons is connected  by a red dashed bidirectional arrow in the sense of Theorem \ref{full learning and bidirection for a frequency learning}. This explains that communication of neurons emerges in the process of frequency learning.} 
\label{fig:M4}
\end{center}
\vspace{-0.1cm}
\end{figure}
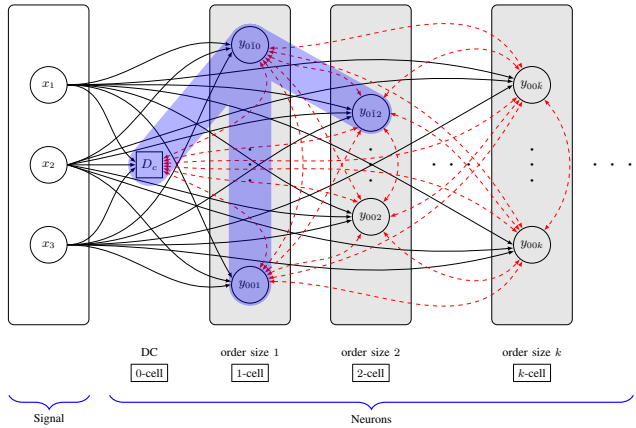

We are occasionally interested in geometric quantities of an energy function, e.g., 
a mean, variation, skewness, or  kurtosis, etc.
To calculate the moments of the energy, 
assume that $p_0$ is sufficiently smooth.
Let $A_{0}(\mathbb{T}^n)\equiv A_{0}$ be the space of all real analytic functions on the interior of  $\mathbb{T}^n$, which fix the origin,
with the supremum norm and write $f\in A_{0}$ as
\begin{equation} \label{series}
	f(x)=\sum_{\alpha\in\mathbb{Z}^{\ast n}\setminus\{0\}}y_\alpha x^\alpha,
\end{equation}
where $y_\alpha\in\mathbb{R}$.
From the definition of $A_0$, $(y_\alpha)\in\ell^1$.

The result below is one of the main theorems that gives the unique solution for  (\ref{goal}) on $A_0$, in the concept of moment-analysis, which is embedded in $\ell^\infty(\mathcal{F}(\mathbb{T}^n))\otimes\ell^1$. Let us note that the integrability of $e^{-E}$ is shown in Theorem \ref{decaying}.

\begin{theorem} \label{PNN by moment}
Let $X=(X_1,\ldots,X_n)\in\mathbb{T}^n$ be a random vector from $p_0$.
If $E\in A_{0}$, then  $y$ is the unique solution of $p_0(x)=p(x\mid y)$, determined by
\begin{equation*} 
 	y_\alpha =  \frac1{\alpha!} \partial_{x}^\alpha \ln \frac1{p_0}\Big\vert_{x=0},
	 \quad\;
 	\ln Z(y) =  \ln \frac1{p_0}\Big\vert_{x=0},
\end{equation*}
i.e.,
\begin{equation} \label{fn2}
 	p_0(x)  =  e^{-\ln Z(y) - \sum_{\alpha\ne0} y_\alpha x^\alpha}.
\end{equation}
\end{theorem}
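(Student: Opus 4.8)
The plan is to mirror the structure of the proof of Theorem~\ref{PNN by frequency}, replacing the Fourier-series uniqueness statement with the analogous uniqueness of Taylor coefficients of a real analytic function. First I would observe that any $E\in A_0$ is of the form $(\ref{series})$, so it satisfies the linearization hypothesis $(\ref{linearization})$ with $\phi_\alpha(x)=x^\alpha$ (and $(\phi_\alpha)\in\ell^\infty(\mathbb{T}^n)$ uniformly, since $|x^\alpha|\le1$ on $\mathbb{T}^n$). Hence Lemma~\ref{convexity} applies and Theorem~\ref{basic equations} gives, at a minimum point $y$, the first-order conditions $\E(\phi_\alpha)_{p_0}=\E(\phi_\alpha)_{p}$, i.e.
\begin{equation*}
	\int_{\mathbb{T}^n}\big(p(x\mid y)-p_0(x)\big)x^\alpha\,dx=0
\end{equation*}
for every multi-index $\alpha\ne0$.

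Next I would argue that this moment condition, together with smoothness, forces $p(x\mid y)-p_0(x)$ to be constant. The density $p(x\mid y)=e^{-E(x;y)}/Z(y)$ is real analytic on the interior of $\mathbb{T}^n$ because $E$ is; and $p_0$ is assumed sufficiently smooth (indeed, for the conclusion to make sense we need $\ln(1/p_0)$ real analytic, so that its Taylor series at the origin is $E$ itself — I would state this as the standing smoothness hypothesis on $p_0$). The function $\psi\equiv p(\cdot\mid y)-p_0$ is then at least continuous, all its polynomial moments over $\mathbb{T}^n$ beyond the constant one vanish, and by the density of polynomials in $C(\mathbb{T}^n)$ (Stone–Weierstrass) this means $\int_{\mathbb{T}^n}\psi(x)q(x)\,dx=0$ for every polynomial $q$ with $\int q=0$, hence $\psi$ is orthogonal to the $L^1$-closure of such $q$, i.e. $\psi$ is a.e.\ constant. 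Since both $p$ and $p_0$ integrate to $1$ over $\mathbb{T}^n$, the constant is $0$, so $p(x\mid y)=p_0(x)$ and therefore $E(x;y)=-\ln Z(y)-\ln p_0(x)=-\ln Z(y)+\ln(1/p_0(x))$.

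Finally I would extract the coefficients. Since $E\in A_0$ fixes the origin, $E(0;y)=0$, which yields $\ln Z(y)=\ln(1/p_0)\big|_{x=0}$. Expanding $\ln(1/p_0)$ — real analytic near $0$ — in its Taylor series and matching with $(\ref{series})$, the uniqueness of Taylor coefficients gives
\begin{equation*}
	y_\alpha=\frac1{\alpha!}\,\partial_x^\alpha\ln\frac1{p_0}\Big|_{x=0}
\end{equation*}
for each $\alpha\ne0$, which is $(\ref{fn2})$. Uniqueness of $y$ as a solution of $p_0=p(\cdot\mid y)$ follows from the same Taylor-coefficient uniqueness, exactly as in Theorem~\ref{PNN by frequency}: any solution $y'$ must satisfy $E(x;y')=-\ln Z(y')+\ln(1/p_0(x))$, whose analytic expansion is forced term by term.

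\textbf{Main obstacle.} The delicate point is the passage from ``all higher moments of $\psi$ vanish'' to ``$\psi$ is constant.'' On a compact set this is clean via Stone–Weierstrass once $\psi$ is known continuous, so the real content is ensuring the hypotheses actually deliver continuity/analyticity: I would need $p_0>0$ and $\ln(1/p_0)$ real analytic on the interior of $\mathbb{T}^n$ (not merely $C^\infty$), both so that the claimed formula for $y_\alpha$ is meaningful and so that the identity $E=-\ln Z+\ln(1/p_0)$ can be read off coefficientwise. A secondary wrinkle is behaviour at the boundary of $\mathbb{T}^n$: $A_0$ consists of functions analytic on the \emph{interior}, so I would phrase the moment/Weierstrass argument on the closed torus using only continuity up to the boundary (which the density $p_0$ has), keeping the analyticity purely as the mechanism for coefficient uniqueness in the interior. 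The integrability of $e^{-E}$ needed to make $Z(y)$ finite is, as the excerpt notes, deferred to Theorem~\ref{decaying}, so I would simply invoke the standing assumption that the partition function is finite.
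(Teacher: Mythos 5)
Your proposal is correct and follows essentially the same route as the paper: the first-order conditions from Lemma~\ref{convexity}/Theorem~\ref{basic equations} give the vanishing moments $\int_{\mathbb{T}^n}(p-p_0)x^\alpha\,dx=0$, density of polynomials forces $p(\cdot\mid y)=p_0$, and uniqueness of Taylor coefficients then yields the formulas for $y_\alpha$ and $\ln Z(y)$. Your Stone--Weierstrass elaboration and the explicit use of $E(0;y)=0$ simply spell out what the paper's terser ``polynomials are dense'' and ``uniqueness of the power series'' steps leave implicit.
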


We denote $\ln Z(y)$ by $D_c$ in Theorem \ref{PNN by moment} and call it the direct current of the network.
Neurons $y_\alpha$ are partial derivatives of $\ln 1/p_0$ at $0$.
By the reason of $y_\alpha$ representing quantitative measures of the shape of $E$,
we call the process of Theorem \ref{PNN by moment} a moment learning for $X$.
For a positive integer $k$, 
we collect $y_\alpha$ having a moment order $\alpha$ such that  
$|\alpha|=k$ and put it by $C_x(k)$, namely, the $k$th cell of $y$. 
In addition, write $C_x(0)=\{D_c\}$.
In Figure \ref{fig:M5} we draw the architecture  of Theorem \ref{PNN by moment}.

\begin{proof}[Proof of Theorem \ref{PNN by moment}]%
Let $x$ be  the realization vector of $X$.
According to $x^\alpha$ is uniformly bounded by 1 and $y\in\ell^1$,
Lemma \ref{convexity} produces that
the Fr\'echet derivative of $E$ with respect to $y_\alpha$ is 
\begin{equation*}
     \partial_{y_\alpha} E= x^\alpha, 
\end{equation*}
and so, for  $\alpha\ne0$,
\begin{equation*}
    \int_{\mathbb{T}^n} \big( p(x\mid y) -  p_0(x)\big)x^\alpha(x)\,dx = 0.
\end{equation*}

The polynomials are dense in $A_0$, that implies that  
the integral of $p(x\mid y)-p_0(x)$ is constant, and besides, the constant must be $0$, since both are probability distributions. 
Hence, there is the unique $y$ such that $p(x\mid y)=p_0(x)$. Analyticity of $E$ turns out to be
\begin{equation*}
	E(x;y)=\sum_{\alpha\ne0} y_\alpha x^\alpha= -\ln Z(y) - \ln p_0(x).
\end{equation*}

By the uniqueness of the power series of (\ref{series}), we have
\begin{equation*}
\begin{aligned}
 	y_\alpha =  \frac1{\alpha!} \partial_{x}^\alpha\ln \frac1{p_0(x)}\Big\vert_{x=0},
	 \quad\;
 	\ln Z(y) =  \ln \frac1{p_0(x)}\Big\vert_{x=0}
\end{aligned}     
\end{equation*}
for $\alpha\ne0$ and 
$\infdiv{p_0}{p}=0$ uniquely. Therefore, $p(x\mid y)=p_0(x)$ and the proof is complete.
\end{proof}

From \textsc{i.i.d.} features,  the non-trivial neurons are located only on axes.

\begin{corollary} \label{cor of moment PNN}
If all components $X_1,\ldots,X_n$ of $X$ are \textsc{i.i.d.}  from $p_0$, then for $\alpha\ne0$,
\begin{equation*}
\begin{aligned}
	y_{\alpha}
		&= 
		\left\{
 		\begin{array}{cl}  
		 \frac{1}{\alpha_k !} \frac{d^{\alpha_k}}{dx_k^{\alpha_k}}\ln \frac1{p_0(x_k)}\Big\vert_{x_k=0} 
		 	&\mbox{if }\;\alpha_k=|\alpha|  \\
		 0 
		 	&\mbox{elsewhere},
 		\end{array}
		\right.\\
	\ln Z(y) &= \sum_{k=1}^n\ln \frac1{p_0(x_k)}\Big\vert_{x_k=0}.
\end{aligned}     
\end{equation*}
\end{corollary}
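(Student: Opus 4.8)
The plan is to mirror the argument for Corollary~\ref{cor of PNN}, transporting it from Fourier coefficients to Taylor coefficients. First I would record the consequence of the \textsc{i.i.d.} hypothesis at the level of the energy function: since $p_0(x)=\prod_{k=1}^n p_0(x_k)$, taking logarithms gives $\ln\frac{1}{p_0(x)}=\sum_{k=1}^n\ln\frac{1}{p_0(x_k)}$, and this additive decomposition is inherited by every quantity appearing in Theorem~\ref{PNN by moment}. In particular the direct current is immediate: $\ln Z(y)=\ln\frac{1}{p_0}\big|_{x=0}=\sum_{k=1}^n\ln\frac{1}{p_0(x_k)}\big|_{x_k=0}$.

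Next I would evaluate $y_\alpha=\frac{1}{\alpha!}\partial_x^\alpha\ln\frac{1}{p_0}\big|_{x=0}$ for $\alpha\ne0$ by distributing the multi-index derivative over the finite sum, writing $\partial_x^\alpha\ln\frac{1}{p_0(x)}=\sum_{k=1}^n\partial_{x_1}^{\alpha_1}\cdots\partial_{x_n}^{\alpha_n}\ln\frac{1}{p_0(x_k)}$. The $k$th summand depends on $x_k$ alone, so whenever $\alpha$ has a nonzero entry $\alpha_j$ with $j\ne k$ the factor $\partial_{x_j}^{\alpha_j}$ annihilates it; hence the only term that survives is the one for which $\alpha$ is concentrated on a single axis $k$, i.e.\ $\alpha_k=|\alpha|$ and $\alpha_j=0$ for $j\ne k$. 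In that case $\alpha!=\alpha_k!\prod_{j\ne k}0!=\alpha_k!$ and $\partial_x^\alpha$ reduces to $\frac{d^{\alpha_k}}{dx_k^{\alpha_k}}$, which gives exactly the stated formula; if $\alpha$ is not supported on a single axis, every summand is killed and $y_\alpha=0$.

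The only step requiring a word of care is the legitimacy of differentiating the finite sum term by term and evaluating at the origin, but this is automatic: $p_0$ was assumed sufficiently smooth for Theorem~\ref{PNN by moment}, and each $\ln\frac{1}{p_0(x_k)}$ is real analytic near $0$, so differentiation commutes with the finite summation. Thus the genuine content is the multi-index bookkeeping, namely the identity $\alpha!=\alpha_k!$ on a single-axis support and the vanishing of a pure partial derivative in a coordinate absent from the integrand; no analytic input beyond Theorem~\ref{PNN by moment} is needed, and I expect this combinatorial bookkeeping to be the only (minor) obstacle.
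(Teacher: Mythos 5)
Your proposal is correct and follows essentially the same route as the paper: decompose $\ln\frac{1}{p_0}$ additively via the \textsc{i.i.d.} hypothesis, apply Theorem~\ref{PNN by moment}, and observe that each summand depends on a single coordinate so that $\partial_x^\alpha$ kills every term unless $\alpha$ is supported on one axis. Your extra bookkeeping ($\alpha!=\alpha_k!$ on a single-axis support and the reduction of $\partial_x^\alpha$ to an ordinary derivative) is a slightly more explicit rendering of what the paper leaves implicit, but it is the same argument.
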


\begin{proof} By  the  \textsc{i.i.d.} property, 
\begin{equation*}
	\ln p_0(x) = \sum_{k=1}^n \ln p_0(x_k).
\end{equation*}
By Theorem \ref{PNN by moment},
\begin{equation} \label{iid moment coefficient} 
	y_\alpha =  \frac1{\alpha!}\partial_{x}^\alpha \sum_{k=1}^n \ln \frac1{p_0(x_k)}\Big|_{x_k=0}.
\end{equation}
In the summand of (\ref{iid moment coefficient}), $\ln 1/p_0(x_k)$ depends only on $x_k$. Thus, (\ref{iid moment coefficient}) vanishes if at least two components of $\alpha$ are non-zero for $\alpha\ne0$. when $\alpha=0$, the result follows directly. 
%
\end{proof}

%
%
%

%

\begin{figure}[htp]
\begin{center}  
\tikz \node [scale=0.53, inner sep=0,] {
\begin{tikzpicture}
	\tikzstyle{place}=[circle, text centered, draw=black, text width={width("MAG")}]
	\tikzstyle{pretty}=[rectangle, text centered, draw=black, minimum width=0.65cm, minimum height=0.65cm]
	\tikzstyle{ground}=[fill,pattern=north east lines,draw=none,minimum width=0.3,minimum height=0.6]
	
	   \draw[fill=black!0, rounded corners] (-1, 1) rectangle (1, -7) {};
	   \draw[fill=black!10, rounded corners] (4, 1) rectangle (6, -7) {};
	   \draw[fill=black!10, rounded corners] (7., 1) rectangle (9., -7) {};
	   \draw[fill=black!10, rounded corners] (11, 1) rectangle (13, -7) {};

	\draw node at (0, -1) [place] (first_1) {$x_1$}; 
	\draw node at (0, -3) [place] (first_2) {$x_2$};  
	\draw node at (0, -5) [place] (first_3) {$x_{3}$};	
	
	\node at (5, 0) [place] (1_1)
		{$y_{100}$};  
	\node at (2.5, -3) [pretty] (1_2)
		{$D_c$};  \node [text centered] at (5,-3)  {$\begin{matrix} {\bigcdot} \\ \bigcdot \\ \bigcdot \end{matrix}$};
	\node at (5, -6) [place] (1_3)
		{$y_{001}$};

	\node at (8., -1.7) [place] (2_1)
		{$y_{200}$}; \node [text centered] at (8.,-3)  {$\begin{matrix} {\bigcdot} \\ \bigcdot \\ \bigcdot \end{matrix}$};
	\node at (8., -4.3) [place] (2_2)
		{$y_{002}$};
 	\node [text centered] at (10.,-3)  {$\begin{matrix} {\bigcdot} & \bigcdot & \bigcdot \end{matrix}$};
 
	\node at (12, -1) [place] (3_1)
		{$y_{k00}$}; \node [text centered] at (12,-3)  {$\begin{matrix} {\bigcdot} \\ \bigcdot \\ \bigcdot \end{matrix}$};	
	\node at (12, -5) [place] (3_2)
		{$y_{00k}$}; 
 	\node [text centered] at (14.,-3)  {$\begin{matrix} {\bigcdot} & \bigcdot & \bigcdot \end{matrix}$};

			\draw [->,black,>=latex] (first_1.0) to [out=0,in=170] (1_1.180);
			\draw [->,black,>=latex] (first_1.0) to [out=0,in=140] (1_2.170);
			\draw [->,black,>=latex] (first_1.0) to [out=0,in=-240] (1_3.-200);

			\draw [->,black,>=latex] (first_2.0) to [out=15,in=190] (1_1.190);
			\draw [->,black,>=latex] (first_2.0) to [out=0,in=180] (1_2.180);
			\draw [->,black,>=latex] (first_2.0) to [out=-15,in=-190] (1_3.-190);

			\draw [->,black,>=latex] (first_3.0) to [out=0,in=240] (1_1.200);
			\draw [->,black,>=latex] (first_3.0) to [out=0,in=-140] (1_2.-170);
			\draw [->,black,>=latex] (first_3.0) to [out=0,in=-170] (1_3.-180);

			\draw [->,black,>=latex] (first_1.0) to [out=0,in=165] (2_1.170);
			\draw [->,black,>=latex] (first_1.0) to [out=0,in=160] (2_2.170);
			
			\draw [->,black,>=latex] (first_2.0) to [out=10,in=180] (2_1.180);
			\draw [->,black,>=latex] (first_2.0) to [out=-10,in=-180] (2_2.180);

			\draw [->,black,>=latex] (first_3.0) to [out=0,in=-160] (2_1.-170);
			\draw [->,black,>=latex] (first_3.0) to [out=0,in=-165] (2_2.-170);			

			\draw [->,black,>=latex] (first_1.0) to [out=0,in=165] (3_1.160);
			\draw [->,black,>=latex] (first_1.0) to [out=0,in=150] (3_2.-180);
			
			\draw [->,black,>=latex] (first_2.0) to [out=17,in=175] (3_1.170);
			\draw [->,black,>=latex] (first_2.0) to [out=-17,in=-175] (3_2.-170);
			
			\draw [->,black,>=latex] (first_3.0) to [out=0,in=-150] (3_1.180);
			\draw [->,black,>=latex] (first_3.0) to [out=0,in=-165] (3_2.-160);

			\draw [dashed,<->,red,>=latex] (1_1.-60) to [out=-60,in=25] (1_2.25);
			\draw [dashed,<->,red,>=latex] (1_1.-50) to [out=-50,in=50] (1_3.50);
			\draw [dashed,<->,red,>=latex] (1_1.-40) to [out=-40,in=120] (2_2.120);
			\draw [dashed,<->,red,>=latex] (1_1.-30) to [out=-30,in=120] (2_1.120);
			\draw [dashed,<->,red,>=latex] (1_1.-20) to [out=-10,in=120] (3_2.120);
			\draw [dashed,<->,red,>=latex] (1_1.-10) to [out=0,in=120] (3_1.120);
			\draw [dashed,<->,red,>=latex] (1_3.60) to [out=60,in=-25] (1_2.-25);
			\draw [dashed,<->,red,>=latex] (1_3.40) to [out=40,in=-120] (2_1.-120);
			\draw [dashed,<->,red,>=latex] (1_3.30) to [out=30,in=-120] (2_2.-120);
			\draw [dashed,<->,red,>=latex] (1_3.20) to [out=10,in=-120] (3_1.-120);
			\draw [dashed,<->,red,>=latex] (1_3.10) to [out=0,in=-120] (3_2.-120);
			\draw [dashed,<->,red,>=latex] (2_1.-130) to [out=-160,in=15] (1_2.15);
			\draw [dashed,<->,red,>=latex] (2_1.-50) to [out=-50,in=50] (2_2.50);
			\draw [dashed,<->,red,>=latex] (2_1.50) to [out=50,in=130] (3_1.130);
			\draw [dashed,<->,red,>=latex] (2_1.0) to [out=-20,in=130] (3_2.130);
			\draw [dashed,<->,red,>=latex] (2_2.130) to [out=160,in=-15] (1_2.-15);
			\draw [dashed,<->,red,>=latex] (2_2.-50) to [out=-50,in=-130] (3_2.-130);
			\draw [dashed,<->,red,>=latex] (2_2.0) to [out=20,in=-130] (3_1.-130);
			\draw [dashed,<->,red,>=latex] (3_1.-140) to [out=-140,in=5] (1_2.5);
			\draw [dashed,<->,red,>=latex] (3_1.-50) to [out=-50,in=50] (3_2.50);
			\draw [dashed,<->,red,>=latex] (3_2.140) to [out=140,in=-5] (1_2.-5);
 	\draw [thick, blue,decorate,decoration={brace,amplitude=10pt,mirror},xshift=0.4pt,yshift=-0.4pt](-1, -9) -- (1, -9) node[black,midway,yshift=-0.6cm] {\small Signal}; %
	\node [text centered, text width=3.4cm] at (2.5,-8)   {\small DC    \\[4pt] \boxed{0$-cell$}};
	\node [text centered, text width=3.4cm] at (5,-8)   {\small order size $1$ \\[4pt] \boxed{1$-cell$}};
	\node [text centered, text width=3cm] at (8.,-8)  {\small order size $2$ \\[4pt] \boxed{2$-cell$}};
	\node [text centered, text width=3cm] at (12,-8)  {\small order size $k$ \\[4pt] \boxed{k$-cell$}};
 	\draw [thick, blue,decorate,decoration={brace,amplitude=10pt,mirror},xshift=0.4pt,yshift=-0.4pt](1.5, -9) -- (14.5, -9) node[black,midway,yshift=-0.6cm] {\small Neurons}; %
\begin{pgfonlayer}{foreground}
\fill[blue,opacity=0.3] \convexpath{1_2,1_1,1_3,1_1,2_1,1_1}{15pt};
\end{pgfonlayer}
\end{tikzpicture}
};
\caption{Architecture of  the moment learning  for $(x_1,x_2,x_3)$. The blue  area represents the neurons in the dictionary $\mathpzc{D}=\{0,100,010,001,200\}$, where $0$ is the index of $D_c$. Any pair of neurons is connected by a red dashed bidirectional arrow in the sense of Theorem \ref{full learning and bidirection for a moment learning}. This explains that communication of neurons emerges in the process of moment learning.} 
\label{fig:M5}
\end{center}
\vspace{-0.1cm}
\end{figure}
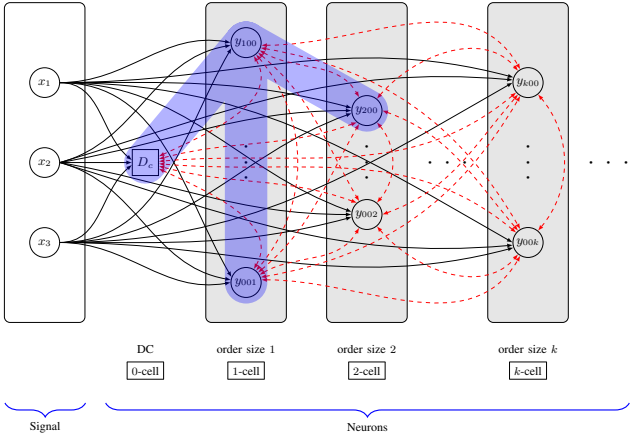

The corollary below shows 
that the lower and upper magnitudes of a governing probability control the size of an energy function
from $\ln 1/p_0(x) = E(x;y) + D_c$. 

\begin{corollary} \label{bound of energy1}
If $y$ is the the solution in Theorem \ref{PNN by frequency} or \ref{PNN by moment}, then
\begin{equation*}
	\essinf_{x}\ln\frac1{p_0(x)} - D_c 
		\le E(x;y) \le 
			\esssup_{x} \ln\frac1{p_0(x)} - D_c
\end{equation*}
for almost every $x$.
\end{corollary}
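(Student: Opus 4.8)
The plan is to read the bound off directly from the explicit forms of the energy function proved in Theorems~\ref{PNN by frequency} and~\ref{PNN by moment}, with no further analysis required. First I would take $-\ln$ of both sides of (\ref{fn}) in the frequency case, or of (\ref{fn2}) in the moment case; since $E(x;y)$ is by construction the convergent series $\sum_{\alpha\ne0}y_\alpha\omega_\alpha(x)$, respectively $\sum_{\alpha\ne0}y_\alpha x^\alpha$, and $D_c$ is merely the abbreviation for $\ln Z(y)$, this yields the single pointwise identity
\[
	\ln\frac1{p_0(x)} = D_c + E(x;y)
\]
valid for almost every $x\in\mathbb{T}^n$; equivalently $E(x;y) = \ln\frac1{p_0(x)} - D_c$ a.e.

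Next I would use that $D_c$ does not depend on $x$, so that subtracting it commutes with $\essinf_x$ and $\esssup_x$. Starting from the trivial a.e.\ sandwich $\essinf_x\ln\frac1{p_0(x)} \le \ln\frac1{p_0(x)} \le \esssup_x\ln\frac1{p_0(x)}$ and subtracting $D_c$ throughout, the identity of the previous paragraph converts this into
\[
	\essinf_x\ln\frac1{p_0(x)} - D_c \;\le\; E(x;y) \;\le\; \esssup_x\ln\frac1{p_0(x)} - D_c
\]
for almost every $x$, which is exactly the assertion.

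The only subtlety worth flagging — and it is not really an obstacle — is that the identity $\ln\frac1{p_0}=D_c+E(\cdot\,;y)$ holds only modulo a null set (for instance where $p_0$ might vanish), so the essential, rather than pointwise, extrema are the correct quantities; this is already the form in which the corollary is stated. Off that null set the bounds are in fact sharp, since the essential extreme values of $E(\cdot\,;y)$ occur precisely where $\ln 1/p_0$ is essentially extremal. In short, no estimate, convergence argument, or appeal to (\ref{epdes1}) is needed beyond the closed-form solutions $p_0(x)=e^{-D_c-E(x;y)}$ themselves.
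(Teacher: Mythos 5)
Your proposal is correct and matches the paper's (implicit) argument exactly: the paper introduces the corollary with the identity $\ln\frac1{p_0(x)} = E(x;y) + D_c$, which follows from the closed forms (\ref{fn}) and (\ref{fn2}), and the bound is then just the a.e.\ sandwich between the essential extrema after subtracting the constant $D_c$. Nothing further is needed, and your remark about why essential (rather than pointwise) extrema appear is a reasonable clarification.
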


The next result ensures the $L^1$-norm convergence of approximated network probabilities equipped with a finite sum of $E$.

\begin{theorem} \label{decaying}
Let $E_N(x;y)$ be either $\sum_{0\ne|\alpha|\le N} y_\alpha \omega_\alpha(x)$ or $\sum_{0\ne|\alpha|\le N} y_\alpha x^\alpha$. Then
\begin{equation*}
		\lim_{N\to\infty}\int_{\mathbb{T}^n} \big|e^{-E(x;y)} - e^{-E_N(x;y)}\big|\,dx = 0.
\end{equation*}
\end{theorem}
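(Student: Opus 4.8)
The plan is to upgrade the coefficientwise control on the neurons to a \emph{uniform} estimate on the energy, and then to push this through the exponential. First I would record that in both cases the neuron sequence is absolutely summable: if $E\in H_a^1$ then $\|E\|_{H_a^1}=\sum_{\alpha\ne0}|y_\alpha|<\infty$, and if $E\in A_0$ then $(y_\alpha)\in\ell^1$ by the definition of $A_0$; in either case set $M:=\sum_{\alpha\ne0}|y_\alpha|<\infty$. Writing $\phi_\alpha$ for the basis function occurring in the linearization $(\ref{linearization})$ — so $\phi_\alpha=\omega_\alpha$ in the frequency case and $\phi_\alpha(x)=x^\alpha$ in the moment case — I would use that $\sup_{x\in\mathbb{T}^n}|\phi_\alpha(x)|\le 1$, since $|\omega_\alpha(x)|\equiv 1$ and $|x^\alpha|=\prod_k|x_k|^{\alpha_k}\le 1$ on $\mathbb{T}^n=[-1,1)^n$. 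Consequently both $E(\cdot\,;y)$ and $E_N(\cdot\,;y)$ are bounded in modulus by $M$ on $\mathbb{T}^n$, and, more to the point,
\[
	\|E-E_N\|_{L^\infty(\mathbb{T}^n)}=\Big\|\sum_{|\alpha|>N}y_\alpha\phi_\alpha\Big\|_{L^\infty(\mathbb{T}^n)}\le\sum_{|\alpha|>N}|y_\alpha|=:\varepsilon_N,
\]
and $\varepsilon_N\to 0$ as $N\to\infty$ because $(y_\alpha)\in\ell^1$, i.e.\ $E_N\to E$ uniformly on $\mathbb{T}^n$.

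Second, I would transfer this uniform convergence across $t\mapsto e^{-t}$, which is Lipschitz with constant $e^M$ on the interval $[-M,M]$. Since $E(x;y)$ and $E_N(x;y)$ both take values in $[-M,M]$, the mean value theorem gives, for a.e.\ $x\in\mathbb{T}^n$,
\[
	\big|e^{-E(x;y)}-e^{-E_N(x;y)}\big|\le e^M\,\big|E(x;y)-E_N(x;y)\big|\le e^M\varepsilon_N.
\]
Integrating over the torus, whose Lebesgue measure is $|\mathbb{T}^n|=2^n<\infty$, yields
\[
	\int_{\mathbb{T}^n}\big|e^{-E(x;y)}-e^{-E_N(x;y)}\big|\,dx\le 2^n e^M\varepsilon_N\longrightarrow 0,
\]
which is exactly the assertion. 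I would also point out that the same bound $e^{-E}\le e^{M}$ on $\mathbb{T}^n$ re-establishes the integrability of $e^{-E(x;y)}$ that was asserted (pending this theorem) after Theorems \ref{PNN by frequency} and \ref{PNN by moment}, so this statement closes that loop as well.

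I do not expect a genuine obstacle here; the argument is short. The one point that actually does work is the uniform bound $M$ on the energy: it is what prevents the exponential from amplifying the $\ell^1$-tail $\varepsilon_N$ in an uncontrolled way, and it is furnished for free by the summability of the neurons together with the normalizations $|\omega_\alpha|\le 1$ and $|x^\alpha|\le 1$ on $\mathbb{T}^n$. (If one wanted instead the \emph{normalized} network probabilities $e^{-E_N}/Z_N$ to converge in $L^1$ — the informal statement in the introduction — one would additionally note that $Z_N=\int_{\mathbb{T}^n}e^{-E_N}\to\int_{\mathbb{T}^n}e^{-E}=Z>0$ by the same uniform convergence and then combine the two limits; but for the unnormalized form stated in Theorem \ref{decaying} this extra step is unnecessary.)
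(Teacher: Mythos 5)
Your proof is correct and follows essentially the same route as the paper's: both arguments hinge on the uniform tail estimate $\|E-E_N\|_{L^\infty(\mathbb{T}^n)}\le\sum_{|\alpha|>N}|y_\alpha|\to0$, the only difference being that you pass through the exponential via the mean value theorem with the global Lipschitz constant $e^M$, whereas the paper expands $e^{-E_N}=e^{-E}e^{E-E_N}$ in a Taylor series and arrives at the bound $\bigl(e^{\sum_{|\alpha|>N}|y_\alpha|}-1\bigr)Z(y)$. Your variant has the minor advantage that the bound $e^{-E}\le e^{M}$ establishes the finiteness of $Z(y)$ directly rather than assuming it, which, as you note, is the integrability claim this theorem was advertised to settle.
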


\begin{proof}
By Taylor's series expansion,  by the triangle inequality, and  by the uniform convergence, 
\begin{equation*}
\begin{aligned}
	\int_{\mathbb{T}^n} & \big|e^{-E(x;y)} - e^{-E_N(x;y)}\big|dx \\
		&\le \sum_{k=1}^\infty\frac{1}{k!}\int_{\mathbb{T}^n} e^{-E(x;y)}|E(x;y)-E_N(x;y)|^k\,dx\\
		&\le \sum_{k=1}^\infty\frac{1}{k!}
			\left(\sum_{|\alpha|>N} |y_\alpha|\right)^k
			\int_{\mathbb{T}^n} e^{-E(x;y)}dx\\
		&= \sum_{k=1}^\infty\frac{1}{k!}
			\left(\sum_{|\alpha|>N} |y_\alpha|\right)^k Z(y) \\
		&= \left(e^{\sum_{|\alpha|>N} |y_\alpha|}-1 \right) Z(y).
\end{aligned}	
\end{equation*}
The last term goes to 0 as $N\to\infty$, since $y\in\ell^1$. Therefore, the proof is complete.
\end{proof}

\begin{remark} \label{description1}
\begin{enumerate}[$(i)$]
\item Theorems \ref{PNN by frequency} and \ref{PNN by moment} provide that the components of a random vector are unidirectionally connected to every neuron simultaneously.
\item One may generalize the energy space to a separable space
that satisfies summable conditions of (\ref{linearization}).
\item Architectures of Theorems \ref{PNN by frequency}, \ref{PNN by moment} say that
cells of higher order neurons are responsible for higher resolutions of frequencies and moments.
\item Theorem  \ref{decaying} guarantees the $L^1$-convergence of $e^{-E_N}$. 
This enables us to approximate likelihoods equipped with $E_N$ instead of $E$.
\end{enumerate}
\end{remark}

\smallskip

\section{Full learning and communication of neurons}
\setcounter{paragraph}{1}

With finite samples, the induced $p_0$ is an approximation.
Moreover, any calculated neuron $\tilde y_\alpha$ is also an approximation of $y_\alpha$  
In this section, 
we will settle a learning status up to the full learning of $\tilde y_\alpha = y_\alpha$.

Since $\mathbb{T}^n$ is  second countable in the standard topology, it is generated by
a countable basis $(B_k)$. 
If $x\in\mathbb{T}^n$ does not have a dense orbit $(x_k)$, then
the orbit is disjoint with some $B_k$. From the assumption in advance that a nonempty open set has a positive probability, we have 
\begin{equation*}
	0=\frac1m\sum_{k=0}^{m-1}\mathbbm{1}_{B_k}(x_k) \ne\mathbb{E}(\mathbbm{1}_{B_k}(X))_{p_0}>0
\end{equation*}
for any $m$, where $\mathbbm{1}_{B_k}$ is an indicator function of $B_k$. By Birkhoff ergodic theorem, the set of all $x$ whose orbit is not dense in $\mathbb{T}^n$ has measure zero. 
Hence, we conclude the following remark.
The Birkhoff ergodic theorem will be described in section  \ref{kmd} more precisely.

\begin{remark} \label{denseness}
For almost every $x$, its orbit is dense in $\mathbb{T}^n$.
\end{remark}

\paragraph{Frequency learning}
Let $(x)$ be stationary ergodic samples. Take a partition of $\mathbb{T}^n$ with non-overlapping rectangles such that every rectangle contains a single sample $x$, whose volume is written as $\Delta_x$.
On $H_a^1$,  from the approximation,  
\begin{equation} \label{app neuron1}
	\tilde y_\alpha  \equiv \frac1{2^n}\sum_{x} \bar\omega_\alpha(x)\ln \frac1{p_0(x)} \Delta_x,
\end{equation}
we have
\begin{equation}\label{connection1}
\begin{aligned}
	\tilde y_\alpha  \approx y_\alpha 
		&= \widehat{\ln \frac1{p_0}}(\alpha) 
		=\widehat{\omega_\beta\ln \frac1{p_0}}(\alpha+\beta) \\ 
		&= \sum_{\gamma\in\mathbb{Z}^n}  
			\widehat{\omega_\beta} (\alpha+\beta-\gamma)(x) 
			\widehat{\ln \frac1{p_0}}(\gamma) \\
		&\approx \frac1{2^n}\sum_{\gamma\in\mathbb{Z}^n}  
			\sum_{x}  \bar \omega_{\alpha-\gamma}(x) \Delta_x\,
			 \tilde y_\gamma,
\end{aligned}		
\end{equation}
where $\tilde y_0$ is an approximation of $D_c$. 
Every $\tilde y_\alpha$ is an infinite linear combination of $\tilde y_\gamma$ approximately, especially, $|y_\gamma|\to0$ as $\tilde\gamma|\to$ is negligible if $|\gamma|$ is sufficiently large according to Theorem \ref{PNN by frequency}.

\begin{theorem} \label{full learning and bidirection for a frequency learning} 
If $x_1,\ldots,x_m$ are realization of a stationary ergodic vector process of $X_1$, $\cdots$, $X_m$, then for any $\alpha$, 
\begin{equation*}
\begin{aligned}
	\tilde y_\alpha \stackrel{(\ref{app neuron1})}{=}
	& \frac1{2^n}\sum_{k=1}^m \bar\omega_\alpha(x_k)\ln  \frac1{p_0(x_k)} \Delta_{x_k}  \\
		& \to y_\alpha \leftarrow
	\frac1{2^n}\sum_{\gamma\in\mathbb{Z}^n}
		\sum_{k=1}^m  
		\bar \omega_{\alpha-\gamma}(x_k) \Delta_{x_k}\tilde y_\gamma \stackrel{(\ref{connection1})}{\approx} \tilde y_\alpha
\end{aligned}		
\end{equation*}
as $m\to\infty$, where $\tilde y_0$ is an approximation of $D_c$.
\end{theorem}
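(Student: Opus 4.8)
The plan is to read off both halves of the displayed convergence from a single observation — that the two sums are tagged Riemann sums on the torus $\mathbb{T}^n$ whose mesh vanishes — together with the closed form of $y_\alpha$ furnished by Theorem \ref{PNN by frequency} and the algebraic identity recorded in (\ref{connection1}).

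First I would fix a realization $x$ whose orbit $(x_k)$ is dense in $\mathbb{T}^n$; by Remark \ref{denseness} (the consequence of the Birkhoff ergodic theorem for the stationary ergodic process) almost every $x$ qualifies, and density lets us insist that the partition of $\mathbb{T}^n$ into non-overlapping rectangles carrying one sample each be chosen with vanishing mesh $\delta_m\to0$ as $m\to\infty$ (e.g.\ Voronoi-type cells of an $\varepsilon$-net). Because $E\in H_a^1$ means $E=\sum_{\alpha\ne0}y_\alpha\omega_\alpha$ with $\sum_\alpha|y_\alpha|<\infty$ and $|\omega_\alpha|\equiv1$, this series converges absolutely and uniformly, so $\ln\tfrac1{p_0}=E(\,\cdot\,;y)+D_c$ is continuous and periodic on $\mathbb{T}^n$, and hence so is $\bar\omega_\alpha\ln\tfrac1{p_0}$. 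A continuous function on a box has Riemann sums converging to its integral at a rate governed by its modulus of continuity and the mesh, so
\[
 \tilde y_\alpha=\frac1{2^n}\sum_{k=1}^m\bar\omega_\alpha(x_k)\ln\frac1{p_0(x_k)}\Delta_{x_k}\;\xrightarrow[m\to\infty]{}\;\frac1{2^n}\int_{\mathbb{T}^n}\bar\omega_\alpha(x)\ln\frac1{p_0(x)}\,dx=y_\alpha,
\]
the last equality being the formula of Theorem \ref{PNN by frequency}; the case $\alpha=0$ gives $\tilde y_0\to\widehat{\ln\tfrac1{p_0}}(0)=D_c$ identically. This is the left-hand arrow.

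For the right-hand arrow I would build on (\ref{connection1}). The shift and convolution rules for Fourier coefficients give, for any $\beta$, $y_\alpha=\widehat{\ln\tfrac1{p_0}}(\alpha)=\widehat{\omega_\beta\ln\tfrac1{p_0}}(\alpha+\beta)=\sum_{\gamma}\widehat{\omega_\beta}(\alpha+\beta-\gamma)\,y_\gamma$, and since $\widehat{\omega_\beta}(\alpha+\beta-\gamma)=\delta_{\alpha\gamma}=\frac1{2^n}\int_{\mathbb{T}^n}\bar\omega_{\alpha-\gamma}\,dx$, each coefficient of $y_\gamma$ is an integral approximated precisely by $\frac1{2^n}\sum_{k=1}^m\bar\omega_{\alpha-\gamma}(x_k)\Delta_{x_k}$. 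Thus the right-hand expression of the theorem is just $y_\alpha=\sum_\gamma\delta_{\alpha\gamma}y_\gamma$ with the two substitutions $\delta_{\alpha\gamma}\rightsquigarrow\frac1{2^n}\sum_k\bar\omega_{\alpha-\gamma}(x_k)\Delta_{x_k}$ and $y_\gamma\rightsquigarrow\tilde y_\gamma$, each of which converges as $m\to\infty$ by the first step. To push the $m$-limit through the $\gamma$-summation I would truncate at $|\gamma|\le N(m)$ with $N(m)\to\infty$ slowly enough that $\delta_mN(m)\to0$: on that range both Riemann sums are uniformly close to $\delta_{\alpha\gamma}$ and to $y_\gamma$ respectively, while $\sum_{|\gamma|>N(m)}|y_\gamma|\to0$ because $(y_\gamma)\in\ell^1$ by the very definition of $H_a^1$ (equivalently, Theorem \ref{PNN by frequency} gives $y_\gamma=\widehat{\ln\tfrac1{p_0}}(\gamma)\to0$). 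Collecting terms then yields $\frac1{2^n}\sum_\gamma\sum_{k=1}^m\bar\omega_{\alpha-\gamma}(x_k)\Delta_{x_k}\tilde y_\gamma\to\sum_\gamma\delta_{\alpha\gamma}y_\gamma=y_\alpha$, which together with (\ref{connection1}) is the asserted $\approx\tilde y_\alpha$ and $\to y_\alpha$.

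The routine part is the Riemann-sum convergence in the first step; the genuine obstacle is the interchange in the second. The difficulty is that for a fixed finite sample the approximate neurons $\tilde y_\gamma$ do \emph{not} decay in $\gamma$ — a finite-mesh Riemann sum of a rapidly oscillating integrand need not be small — so the doubly infinite sum $\sum_\gamma\sum_k$ literally fails to converge for each fixed $m$ and cannot be dominated uniformly in $m$. The resolution is exactly the coupling of the truncation level $N(m)$ to the partition mesh $\delta_m$ indicated above, running the whole estimate inside the truncation and leaning on the $\ell^1$-tail bound for the true coefficients $(y_\gamma)$ from Theorem \ref{PNN by frequency}; once that coupling is fixed, the remaining bounds are the same modulus-of-continuity estimates already used in the first step.
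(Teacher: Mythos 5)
Your first half is exactly the paper's argument: the $H_a^1$ hypothesis makes $\ln\frac1{p_0}=E+D_c$ bounded and continuous, Remark \ref{denseness} supplies a dense orbit for almost every realization, and the tagged Riemann sums converge to the Fourier coefficient $y_\alpha=\widehat{\ln\frac1{p_0}}(\alpha)$ of Theorem \ref{PNN by frequency}. For the second half, however, the paper's own proof simply passes the limit $m\to\infty$ inside the sum over $\gamma$, citing only $|\bar\omega_{\alpha-\gamma}|=1$ together with the pointwise convergences $\Upsilon_\alpha(\gamma)\to\delta_\alpha(\gamma)$ and $\tilde y_\gamma\to y_\gamma$; it never confronts the fact that, for fixed $m$, neither the finite exponential sum $\Upsilon_\alpha(\gamma)$ nor $\tilde y_\gamma$ decays as $|\gamma|\to\infty$, so the doubly infinite sum in the statement is not even convergent and no dominated-convergence argument is available. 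You have correctly diagnosed this as the genuine obstacle, and your repair --- truncate at $|\gamma|\le N(m)$ with $N(m)\to\infty$ slowly relative to the mesh $\delta_m$, control the truncated range by uniform Riemann-sum (modulus-of-continuity) estimates, and absorb the tail using the $\ell^1$ bound on the true coefficients $(y_\gamma)$ --- is the natural way to make the assertion both meaningful and true; it is also how the paper's informal remark after (\ref{connection1}), that $|y_\gamma|$ is ``negligible'' for large $|\gamma|$, has to be read. So your proposal follows the same route as the published proof for the first convergence, but for the second it supplies the justification for interchanging $\lim_{m\to\infty}$ with $\sum_{\gamma\in\mathbb{Z}^n}$ that the paper's three-line computation omits, at the (necessary) cost of interpreting the $\gamma$-sum as one whose cutoff grows with $m$.
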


\begin{proof}
From the estimate of $|\ln 1/p_0(x_k)|=|E(x_k;y)|\le\sum_\alpha |y_\alpha|\approx_n \|E\|_{H_a^1}$, 
Remark \ref{denseness} gives the convergence of 
\begin{equation} \label{app neuron proof}
		\tilde y_\alpha\to \frac1{2^n}\int_{\mathbb{T}^n} \bar\omega_\alpha(x)\ln  \frac1{p_0(x)} \,dx = y_\alpha 
\end{equation}
as $m\to\infty$. 
In addition, by Remark \ref{denseness}, by (\ref{app neuron proof}), and by the boundedness of $|\bar\omega_{\alpha}|=1$,    
\begin{equation*}
\begin{aligned}
	 \frac1{2^n}\sum_{\gamma\in\mathbb{Z}^n} \sum_{k=1}^m  
		\bar \omega_{\alpha-\gamma}(x_k) \Delta_{x_k}\,\tilde y_\gamma
		&\to 
	\sum_{\gamma\in\mathbb{Z}^n}	y_\gamma\frac1{2^n}
		\int_{\mathbb{T}^n} \bar\omega_{\alpha-\gamma}(x) \,dx \\
		&=\sum_{\gamma\in\mathbb{Z}^n}y_\gamma \delta_{\alpha}(\gamma)\\
		&=y_\alpha
\end{aligned}		
\end{equation*}
as $m\to\infty$,
where $\delta_{\alpha}(\gamma)$ is a Dirac delta.
Therefore, the proof is complete.
\end{proof}

Theorem \ref{full learning and bidirection for a frequency learning} says 
bidirectional communication between neurons during learning.
Red dashed arrows of Figures \ref{fig:M4} denote connections between $y_\alpha$ and $y_\gamma$,  the
connection  is defined by
\begin{equation} \label{lr for frequency}
	\Upsilon_\alpha(\gamma) \equiv \frac1{2^n}\sum_{k=1}^m  \bar \omega_{\alpha-\gamma}(x_k) \Delta_{x_k}.
\end{equation}

We define a learning rate for frequency by the inverse proportion to the maximum variance of $\Upsilon_\alpha$ (for example, see Figure \ref{frequency LR}), i.e.,
\begin{equation} \label{learning-rate1}
	\frac1{\max_\alpha \mathrm{Var}(\Upsilon_\alpha)}.
\end{equation}
By the reason of $\Upsilon_\alpha\to\delta_\alpha$ as $m\to\infty$, 
the learning rate grows to $\infty$ as $m\to\infty$.

\paragraph{Moment learning.}
As an approximation of $\partial_x^\alpha f(x)$, 
the $\alpha$th order central difference quotient $\tilde \partial_x^\alpha f(x)$ with a sequence of spacing vectors $(h_k)$
is defined by 
\begin{equation} \label{central difference}
\begin{aligned}
	&\tilde \partial_x^\alpha f(x) 
	\equiv \sum_{\|b\|_\infty=0,1}(-1)^b\frac1{2^\alpha h_k^\alpha} \\
	&\times\! 
		f\Big(x_1\!+\!\sum\limits_{k=1}^{\alpha_1}\!(-1)^{b_1}h_{k,1},
		\ldots,x_n\!+\!\sum\limits_{k=1}^{\alpha_n}\!(-1)^{b_n}h_{k,n}\Big),
\end{aligned}	
\end{equation}
where $h_k=(h_{k,1},\ldots,h_{k,n})$.
As $\max_k|h_k|\to0$, (\ref{central difference}) goes to $\partial_x^\alpha f(x)$ by L'Hospital's theorem if it exists.

Suppose that $(x_k)$ is ordered sample vectors of size $m$.
For notational simplicity, we may put $x_{1,j}$ is the smallest upper bound of $0$, i.e.,
$x_{0,j}<0\le x_{1,j}$  for every $j$,
in the ordering of $x_{k,j}<x_{k+1,j}$ $(k=-m,-m+1,\ldots,m-1)$.

\begin{theorem} \label{full learning and bidirection for a moment learning}
If $x_1,\ldots,x_m$ are ordered realization of a stationary ergodic vector process of $X_1$, $\cdots$, $X_m$, then for each $\alpha$ 
with $|\alpha|\le m$, 
\begin{equation*} 
\begin{aligned}
	\tilde y_\alpha 
	&= \frac{1}{\alpha!}\sum_{\|b\|_\infty=0,1}(-1)^b \frac1{2^\alpha h_k^\alpha} \\
	&\quad\times\!\ln 1/p_0\Big(x_{\sum\limits_{k=1}^{\alpha_1}\!(-1)^{b_1},1},
		\ldots,x_{\sum\limits_{k=1}^{\alpha_n}\!(-1)^{b_n},n}\Big)
		\to y_\alpha
\end{aligned}	
\end{equation*}
as $m\to\infty$,  where 
$h_{k,j}\equiv x_{k+1,j}-x_{k,j}$  is the $j$th component of $h_k$ and
$\tilde y_0$ is an approximation of $D_c$.
\end{theorem}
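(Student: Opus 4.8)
The plan is to reduce the statement to the convergence of a generalized divided difference of the analytic function $\ln 1/p_0$ to its partial derivative at the origin, and then invoke Theorem~\ref{PNN by moment}. Recall that $E\in A_0$ forces $\ln 1/p_0 = E + D_c$ to be real analytic on the interior of $\mathbb{T}^n$, with $0$ an interior point, so every partial derivative $\partial_x^\alpha \ln 1/p_0\big|_{x=0}$ exists, and by Theorem~\ref{PNN by moment} it equals $\alpha!\,y_\alpha$ (while $\ln 1/p_0\big|_{x=0}=D_c=\ln Z(y)$). Thus the theorem will follow once we know that the sample-driven quantity $\tilde y_\alpha$ is a divided-difference approximation of $\tfrac1{\alpha!}\partial_x^\alpha \ln 1/p_0$ at $0$ and that this approximation actually converges.

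First I would observe that $\tilde y_\alpha$ as written is precisely $\tfrac1{\alpha!}\,\tilde\partial_x^\alpha \ln 1/p_0$ evaluated at $x=0$, with the spacing vectors $h_k$ read off from the ordered sample, exactly in the form of $(\ref{central difference})$; the index-shift convention $x_{0,j}<0\le x_{1,j}$ is what makes the central difference symmetric about the origin. By the remark following $(\ref{central difference})$, $\tilde\partial_x^\alpha f(0)\to \partial_x^\alpha f(0)$ as soon as $\max_{1\le k\le|\alpha|}|h_k|\to0$, so the remaining task is to produce this mesh condition from the hypotheses.

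Second I would derive the mesh condition from ergodicity. By Remark~\ref{denseness}, for almost every sample path the orbit $(x_k)$ is dense in $\mathbb{T}^n$; hence along such a path the ordered samples eventually fill any fixed neighbourhood of the origin, so for each coordinate $j$ and each fixed finite band of indices around $0$ the gaps $h_{k,j}=x_{k+1,j}-x_{k,j}$ tend to $0$ as $m\to\infty$. Since $|\alpha|\le m$ is fixed while $m\to\infty$, only finitely many spacings enter the formula for $\tilde y_\alpha$ and all of them shrink, so $\max_k|h_k|\to0$ almost surely. Combining with the previous step gives $\tilde y_\alpha\to\tfrac1{\alpha!}\partial_x^\alpha\ln 1/p_0\big|_{x=0}=y_\alpha$ almost surely; the case $\alpha=0$ is immediate, since $\tilde y_0$ is $\ln 1/p_0$ evaluated at the sample point closest to $0$, which tends to $\ln 1/p_0\big|_{x=0}=D_c$ by continuity.

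The main obstacle is making the divided-difference convergence rigorous for \emph{irregularly} spaced, $k$-dependent spacings rather than a single uniform step. For a real analytic $f$ one expands $f$ in its Taylor series about $0$ and checks that the divided-difference operator annihilates each monomial $x^\beta$ with $\beta<\alpha$ componentwise, reproduces $x^\alpha$ exactly, and contributes an $O(\max_k|h_k|)$ term from the higher monomials; the uniform convergence of the Taylor series on a neighbourhood of $0$, guaranteed by $f\in A_0$, then legitimizes passing to the limit term by term. One must also verify that the random ordering does not spoil the symmetry needed for $x^\alpha$ to be reproduced — this is exactly what the normalization $x_{0,j}<0\le x_{1,j}$ ensures — and that the denominators $h_k^\alpha$ never degenerate, which follows from the standing hypothesis that nonempty open sets carry positive probability, so a.s. no two ordered samples coincide and each $h_{k,j}>0$.
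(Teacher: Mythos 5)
Your proposal is correct and follows essentially the same route as the paper's proof: use Remark~\ref{denseness} to force $\max_k|h_k|\to0$ as $m\to\infty$, identify $\tilde y_\alpha$ with the central difference quotient $(\ref{central difference})$ applied to $\ln 1/p_0=E+D_c$, invoke the convergence of that quotient to $\frac1{\alpha!}\partial_x^\alpha\ln\frac1{p_0}\big|_{x=0}=y_\alpha$ (via Theorem~\ref{PNN by moment}), and handle $\alpha=0$ by continuity at the sample point nearest the origin. The only difference is that you justify the convergence of the irregularly spaced divided difference by a Taylor-expansion argument, where the paper simply appeals to smoothness and the L'Hospital remark following $(\ref{central difference})$; your version fills in that detail rather than departing from the paper's strategy.
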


\begin{proof} 
Let $\alpha\ne0$. By Remark \ref{denseness}, $\max_k|h_k|\to0$ as $m\to\infty$. Since $E$ is sufficiently smooth, 
(\ref{central difference}) yields $\tilde y_\alpha\to y_\alpha$ as $m\to\infty$.
For $\alpha=0$, by Remark \ref{denseness}, the smallest upper bound $x_{1,j}$ of $0$ decrease to $0$ as $m\to\infty$. Hence, 
\begin{equation*}
	\tilde D_c = \ln \frac1{p_0(x_{1,1},\ldots,x_{1,n})} \to \ln \frac1{p_0(0,\ldots,0)}
\end{equation*}
as $m\to\infty$. 
This completes the proof.
\end{proof}

From Theorem \ref{full learning and bidirection for a moment learning},
every $\tilde y_\alpha$ is a linear combination of $(\ln 1/p_0(x_k))$.  This means every $\tilde y_\alpha$ is bidirectionally connected to each other.
Figures \ref{fig:M5} explains communication between neurons
marked with red dashed arrows.

Also, Theorem \ref{full learning and bidirection for a moment learning} says that
the accuracy of $\tilde y_\alpha$ depends on the size of $h_k$. 
Hence, 
the connection  is defined by 
\begin{equation*}
	\Upsilon(k)=h_k,
\end{equation*}
similarly, a learning rate for moment is defined by the inverse proportion to the maximum of differences between samples, i.e., 
\begin{equation} \label{learning-rate2}
	\frac1{\max_{k} |\Upsilon(k)|}.
\end{equation}

\begin{remark}
\begin{enumerate}[$(i)$]
\item The ergodic property in Theorems \ref{full learning and bidirection for a frequency learning} and  \ref{full learning and bidirection for a moment learning} can be replaced with the \textsc{i.i.d.} property.
\item
At the ultimate time, the learning process is complete and every neuron will be independent of each other.
\item
To derive a governing probability, 
its limiting distribution have to be differentiable. Especially, in Theorem \ref{full learning and bidirection for a moment learning}, $p_0$ must be analytic. 
 Otherwise,   
a suitable analytic approximation could be replaced with $p_0$. We present an example in section 9.
\end{enumerate}
\end{remark}

Because all samples come from $p_0$, the lack of samples may cause inaccuracy to calculate $y_\alpha$ of Theorems \ref{full learning and bidirection for a frequency learning} and \ref{full learning and bidirection for a moment learning}.
Although there are several sampling methods, e.g.,
Metropolis-Hastings sampling, Gibbs sampling, both depend on prior information and have accumulated error.  
In the following section, we introduce another sampling method which is very effective,
whose convergence is also proven mathematically.

\smallskip

\section{Dynamical systems} \label{kmd}

In this section we introduce the Koopman mode decomposition (KMD) for a dynamical system as a preprocessing method for samples.
We briefly review the results of Rowley et al. (\cite{rowley-mezic-bagheri-schlatter-henningson}).
The KMD has two remarkable properties (\cite[Rowley, Mezi\'c, Bagheri, Schlatter, Henningson]{rowley-mezic-bagheri-schlatter-henningson}. 
First, 
it could remove redundant features and noise. 
Second, we could extract an amount of samples from snapshots,
even if a sample comes from a nonlinear dynamical system.

We derive a dynamical system for time series of random vectors.
Let $X_t=(X_{t,1},\ldots,X_{t,n})$ 
be a random vector with continuous time $t$ from $p_0$ and
$F(x_{t,j})$  the  cumulative distribution function of $X_{t,j}$. Then    
\begin{equation*}
	\partial_{x_{t,j}}F(x_{t,j}) = p_{0}(x_{t,j}), \quad
 	\frac{d}{dt}F(x_{t,j}) = p_{0}(x_{t,j})\dot{x}_{t,j},
\end{equation*}
where $p_0(x_{t,j})$ is the marginal probability distribution of $X_{t,j}$.

\begin{theorem} \label{dynamical system for stochastic process}
Suppose that $F(x_{t,j})$ is continuously differentiable with respect to $t$ and $x_{t,j}$ for $1\le j\le n$. 
Then the dynamical system for continuous time series of $x_{t,j}$ is obtained by
\begin{equation} \label{cds}
	\dot{x}_{t,j} 
	= \psi_j(x_{t,j}),
\end{equation}
where $\psi_j(x_{t,j})=\frac{\frac{d}{dt} F(x_{t,j} )} {\partial_{x_{t,j}} F(x_{t,j})}$.
\end{theorem}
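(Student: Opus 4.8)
The plan is to obtain $(\ref{cds})$ as nothing more than the chain rule, once two preliminary points are settled: that the trajectory $t\mapsto x_{t,j}$ is actually differentiable, and that the denominator appearing in $\psi_j$ does not vanish. Note that the notation $F(x_{t,j})$ already encodes that $F$ depends on $t$ only through $x_{t,j}$, so the relation $\tfrac{d}{dt}F(x_{t,j})=p_0(x_{t,j})\dot x_{t,j}$ recorded just before the statement is the whole content to be unpacked.

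First I would record that, since $F\in C^1$ in its spatial argument, the marginal density $p_0(x_{t,j})=\partial_{x_{t,j}}F(x_{t,j})$ is continuous; combined with the standing assumption that every nonempty open set carries positive probability, it is strictly positive on the interior of the support of $X_{t,j}$. Hence on that set $F(\,\cdot\,)$ is a strictly increasing $C^1$ function, and by the inverse function theorem its inverse $F^{-1}$ exists and is $C^1$ with $(F^{-1})'=1/(p_0\circ F^{-1})$. This is the only place where the positivity hypothesis on the governing probability is genuinely used; off the support interior one must read $(\ref{cds})$ through a continuous extension, which is harmless since a.e.\ sample trajectory remains in the support.

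Next I would upgrade the regularity of the trajectory. By hypothesis the scalar function $t\mapsto F(x_{t,j})$ is continuously differentiable; writing $x_{t,j}=F^{-1}\big(F(x_{t,j})\big)$ and composing with the $C^1$ map $F^{-1}$ shows that $t\mapsto x_{t,j}$ is itself $C^1$, so $\dot x_{t,j}$ exists and is continuous. This step is in effect the substance of the theorem: a raw sample path of a stochastic process need not be differentiable at all, and the assumption that the post-CDF path $t\mapsto F(x_{t,j})$ is smooth is precisely what transfers differentiability back onto $x_{t,j}$. I expect this to be the main conceptual obstacle to state cleanly, even though the underlying computation is short.

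Finally, the chain rule applied to $t\mapsto F(x_{t,j})$ gives
\begin{equation*}
	\frac{d}{dt}F(x_{t,j}) = F'(x_{t,j})\,\dot x_{t,j} = \big(\partial_{x_{t,j}}F(x_{t,j})\big)\,\dot x_{t,j},
\end{equation*}
which is exactly the identity displayed before the statement; dividing through by the nonvanishing factor $\partial_{x_{t,j}}F(x_{t,j})=p_0(x_{t,j})$ yields $\dot x_{t,j}=\psi_j(x_{t,j})$ with $\psi_j=\big(\tfrac{d}{dt}F\big)\big/\big(\partial_{x_{t,j}}F\big)$. Running the argument for each coordinate $j=1,\dots,n$ assembles the full system $(\ref{cds})$. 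I would present it in three short steps: (i) positivity of $p_0$ and invertibility of $F$; (ii) differentiability of $t\mapsto x_{t,j}$ via $F^{-1}$; (iii) the chain-rule identity and division, with the care concentrated on the nonvanishing denominator.
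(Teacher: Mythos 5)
Your proof is correct and follows essentially the same route as the paper, which presents the theorem as an immediate consequence of the two identities $\partial_{x_{t,j}}F(x_{t,j})=p_0(x_{t,j})$ and $\frac{d}{dt}F(x_{t,j})=p_0(x_{t,j})\dot{x}_{t,j}$ displayed just before the statement and offers no further argument; your chain-rule-plus-division computation is exactly that. The extra care you add (differentiability of $t\mapsto x_{t,j}$ via $F^{-1}$, non-vanishing of the denominator) goes beyond what the paper records and is welcome, with the one caveat that positive probability on nonempty open sets does not by itself force the continuous density $p_0$ to be pointwise positive (e.g.\ a density vanishing at a single point), so strict positivity of $\partial_{x_{t,j}}F$ is really an implicit extra hypothesis in the paper as well as in your write-up.
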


For a discrete time series of random vectors, which is our concern, 
let $X_r=(X_{r,1},\ldots,X_{r,n})$  $(r\in\mathbb{Z}^\ast)$ be a discretization of $(X_t)$.
From (\ref{cds}), 
the dynamical system for $x_{r,j}$ $(r\in\mathbb{Z}^\ast)$ is approximated
by
\begin{equation*}
\begin{aligned}
	x_{r+1,j} 
	&= x_{r,j} + \frac{\frac{d}{dt} F(x_{r,j})} {\partial_{x_{r,j}}F(x_{r,j})}h_{r,j} \\
	&\equiv \tilde\psi_j(x_{r,j})
\end{aligned}	
\end{equation*}
and put
\begin{equation} \label{ds}
	x_{r+1} = \tilde\psi (x_r),
\end{equation}
where $h_{r,j}$ is time step 
and $\tilde\psi(x_r)\!=\!(\tilde\psi_1(x_{r,1}),\ldots,\tilde\psi_n(x_{r,n}))^\top$ is a mapping on
$\mathpzc{M}\subset\mathbb{R}^n$ of an invariant compact manifold.

The space  $L^2(\mu)$ consists of  complex valued $L^2$-functions on $\mathpzc{M}$  with $d\mu\equiv p_0(x)\,dx$ and 
the $L^2$-norm.
For a subspace $\mathpzc{H}$ of $L^2(\mu)$, the Koopman operator $\mathpzc{K}:\mathpzc{H}\to\mathpzc{H}$ is defined by
\begin{equation} \label{koopman operator}
	\mathpzc{K}f=f\circ\tilde\psi.
\end{equation}
We call 
$\lambda\in\mathbb{C}$ an eigenvalue of $\mathpzc{K}$ associated with eigenfunction
$\phi\in\mathpzc{H}$   
if $\mathpzc{K}\phi = \lambda\phi$.

\begin{remark}
\begin{enumerate}[$(i)$]
\item In the dynamical system of (\ref{ds}), it is hard  to calculate $\tilde\psi$, because  the cumulative distribution function $F$ does not reveal itself.  
\item The Koopman operator is a linear, i.e., $\mathpzc{K}(af+bg)=a\mathpzc{K}f+b\mathpzc{K}g$ $(f,g\in\mathpzc{H},\,\,a,b\in\mathbb{C})$.
\end{enumerate}
\end{remark}

To avoid mathematical ambiguity, we need two fundamental  assumptions on $\mathpzc{K}$. First, $\mathpzc{K}$ is bounded on $\mathpzc{H}$ which is dense in $L^2(\mu)$.
Second, the spectrum of $\mathpzc{K}$ consists of only discrete spectrum.  
If $f$ is an infinite linear combination of  Koopman eigenfunctions,  then 
\begin{equation} \label{koopman expansion}
	\mathpzc{K}^r f(x) = \sum_{k=1}^\infty \lambda_k^r v_k \phi_k(x) 
\end{equation} 
for $r\in\mathbb{Z}^\ast$, where
$(\lambda_k, \phi_k)$ is a pair of the Koopman eigenvalue and corresponding eigenfunction,  
$v_k$ a coordinate sequence  of $f$ relative to Koopman eigenfunctions (\cite{rowley-mezic-bagheri-schlatter-henningson}).

Putting $\mathpzc{E}(x)\equiv [f_1(x),\ldots,f_{n'}(x)]^\top$, namely, an ensemble of observables in $\mathpzc{H}$, where
$n'$ is the number of $\mathpzc{E}$,
we have 
\begin{equation} \label{koopman expansion2}
	\mathpzc{K}^r\mathpzc{E}
		= \mathpzc{E}(\tilde\Psi^r)
		= \sum_{k=1}^\infty \lambda_k^r V_k \phi_k
\end{equation} 
for $r\in\mathbb{Z}^\ast$, where a column vector of $V_k\in\mathbb{C}^n$  is the $k$th coordinates of $\mathpzc{E}$ relative to Koopman eigenfunctions, which is called the $k$th Koopman mode with respect to $(\lambda_k,\phi_k)$. 
%

We say that a measure preserving mapping $\tilde\psi$ is ergodic with respect to $\mu$ 
when either $\mu(E)=0$ or $\mu(E)=1$ for any measurable set $E\subset \mathpzc{M}$ with $\tilde\psi^{-1}(E)=E$. Birkhoff ergodic theorem affirms that if $f\in L^2(\mu)\subset L^1(\mu)$, then
\begin{equation*}
	\lim_{N\to\infty} \frac1N \sum_{r=0}^{N-1} f\circ \tilde\psi^r(x) = \int_{\mathpzc{M}}f\,d\mu\quad\mbox{a.e. }\; x\in\mathpzc{M}.
\end{equation*}

Write the measurement of values of $\mathpzc{E}$ along a single trajectory of $\tilde\Psi$ starting at  an initial vector $x\in\mathpzc{M}$ as
\begin{equation} \label{snapshot}
\begin{aligned}
	Y
		&=\left(\begin{matrix*}[c]
		\mid & \mid & & \mid \\ 
		\mathpzc{E}(x) & \mathpzc{E}(\tilde\Psi(x)) & \cdots & \mathpzc{E}(\tilde\Psi^{m-1}(x)) \\
		\mid & \mid & & \mid \\ 
		\end{matrix*}\right), \\[0.5em]		
	Y'
		&=\left(\begin{matrix*}[c]
		\mid & \mid & & \mid \\ 
		\mathpzc{E}(\tilde\Psi(x)) & \mathpzc{E}(\tilde\Psi^2(x)) & \cdots & \mathpzc{E}(\tilde\Psi^{m}(x)) \\
		\mid & \mid & & \mid \\ 
		\end{matrix*}\right)	
\end{aligned}				
\end{equation}
which consist of rows of data snapshots. Put $\mathpzc{A}=Y'Y^{+}$,
where $Y^+$ is the Moore-Penrose pseudo-inverse of $Y$.
According to Remark \ref{denseness}, we have the following relation between the KMD and dynamic mode decomposition (DMD) (\cite[Rowley, Mezi\'c, Bagheri, Schlatter, Henningson]{rowley-mezic-bagheri-schlatter-henningson}).

\begin{theorem} \label{koopman theorem}
Suppose that (\ref{ds}) is ergodic
and  $\mathpzc{E}$ spans an $n_0$-dimensional invariant subspace. 
Let $(\lambda_k,\phi_k)$ be a pair of eigenvalue and corresponding eigenfunction of $\mathpzc{K}$.
If $\phi_k$ belongs to the span of $f_1,\ldots,f_{n'}$, i.e.,
\begin{equation*}
	\phi_k = w_1f_1+\cdots +w_{n'} f_{n'} = w\cdot \mathpzc{E}, 
\end{equation*}
then for almost every $x$,
$w\in\mathpzc{E}\big(\overline{\mathrm{orbit}(x)}\big)$, which is a left eigenvector of $\mathpzc{A}$ with eigenvalue $\lambda_k$.
\end{theorem}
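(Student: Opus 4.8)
The plan is to run the Rowley--Mezi\'c identification of Koopman eigenpairs with dynamic-mode pairs, using ergodicity to guarantee that the finite snapshot matrices $Y,Y'$ of (\ref{snapshot}) are rich enough. Write $\mathpzc{E}=[f_1,\dots,f_{n'}]^{\top}$ and $\phi_k=w\cdot\mathpzc{E}$, and recall $\mathpzc{K}\phi_k=\lambda_k\phi_k$, i.e.\ $\phi_k\circ\tilde\Psi=\lambda_k\phi_k$. After discarding redundant observables we may assume $f_1,\dots,f_{n'}$ are linearly independent, so $n_0=n'$ and $\mathcal{W}:=\mathrm{span}\{\mathpzc{E}(y):y\in\mathpzc{M}\}=\mathbb{C}^{n'}$ (any $c$ with $c\cdot\mathpzc{E}\equiv0$ would be a linear relation among the $f_i$).

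First I would fix a good base point. Since $\tilde\Psi$ is ergodic for $\mu=p_0\,dx$ and $\mu$ gives positive mass to every nonempty open subset of the compact manifold $\mathpzc{M}$, the Birkhoff ergodic theorem applied to the indicators of a countable basis of $\mathpzc{M}$ (cf.\ the argument preceding Remark~\ref{denseness}) shows that for $\mu$-a.e.\ $x$ the forward orbit $\{\tilde\Psi^{r}(x):r\ge0\}$ is dense in $\mathpzc{M}$; fix such an $x$. Next I would check that $Y$ already has full row rank: the subspaces $\mathrm{span}\{\mathpzc{E}(\tilde\Psi^{r}(x)):0\le r\le N\}$ increase with $N$ and are capped by $\dim\mathcal{W}=n'$, and if their union were a proper (hence closed) subspace $W_0\subsetneq\mathbb{C}^{n'}$, then density of the orbit and continuity of $\mathpzc{E}$ would force $\mathpzc{E}(\mathpzc{M})\subseteq W_0$, contradicting $\mathrm{span}\,\mathpzc{E}(\mathpzc{M})=\mathbb{C}^{n'}$. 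Hence for all $m$ beyond some finite threshold, $\mathrm{Range}(Y)=\mathbb{C}^{n'}$, so $YY^{+}=I_{n'}$; in particular $w\in\mathrm{Range}(Y)=\mathrm{span}\{\mathpzc{E}(y):y\in\overline{\mathrm{orbit}(x)}\}$, which is the membership asserted in the theorem. Take $m$ in this range.

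Finally I would read off the eigenvector relation. Evaluating $\phi_k\circ\tilde\Psi=\lambda_k\phi_k$ along the orbit gives $w\cdot\mathpzc{E}(\tilde\Psi^{r+1}(x))=\lambda_k\,w\cdot\mathpzc{E}(\tilde\Psi^{r}(x))$ for every $r$, which, comparing columns in (\ref{snapshot}), is the matrix identity $w^{\top}Y'=\lambda_k\,w^{\top}Y$. Therefore
\begin{equation*}
	w^{\top}\mathpzc{A}=w^{\top}Y'Y^{+}=\lambda_k\,w^{\top}YY^{+}=\lambda_k\,w^{\top},
\end{equation*}
using $YY^{+}=I_{n'}$ (in the general case $n_0\le n'$ one uses instead that $YY^{+}$ is the orthogonal projector onto $\mathrm{Range}(Y)$ and fixes $w$). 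Since $\phi_k\ne0$ forces $w\ne0$, $w$ is a genuine left eigenvector of $\mathpzc{A}=Y'Y^{+}$ with eigenvalue $\lambda_k$.

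The step I expect to be the main obstacle is turning the qualitative density of the orbit (ergodicity) into the quantitative fact that \emph{finitely many} snapshots already span the whole observable subspace, so that $YY^{+}$ collapses to the identity there; this is precisely where the hypotheses ``$\mathpzc{E}$ spans an $n_0$-dimensional invariant subspace'' and Remark~\ref{denseness} are used, the invariance of $\mathrm{span}\{f_i\}$ serving to keep $\mathpzc{E}\circ\tilde\Psi$ (the columns of $Y'$) inside that subspace so that $\mathpzc{A}$ really acts on it. Once this is in place, the rest is the one-line projector computation above.
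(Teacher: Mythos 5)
The paper does not actually prove Theorem \ref{koopman theorem}; it states it as imported from Rowley et al.\ (\cite{rowley-mezic-bagheri-schlatter-henningson}), with Remark \ref{denseness} supplying the ``for almost every $x$'' qualifier. Your reconstruction is the standard argument from that literature and is essentially correct: the eigenfunction identity $\phi_k\circ\tilde\Psi=\lambda_k\phi_k$ evaluated along the orbit gives $w^{\top}Y'=\lambda_k w^{\top}Y$, and once the snapshots span the observable subspace the projector $YY^{+}$ acts as the identity on $w$, yielding $w^{\top}\mathpzc{A}=\lambda_k w^{\top}$. Your use of ergodicity (via the dense-orbit argument of Remark \ref{denseness}) to get full rank of $Y$ after finitely many snapshots is exactly the role that hypothesis plays.

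Two small caveats, neither fatal. First, your rank argument needs $\mathpzc{E}$ continuous (so that a closed proper subspace containing $\mathpzc{E}(\mathrm{orbit}(x))$ would contain $\mathpzc{E}(\mathpzc{M})$); the paper only places the observables in $\mathpzc{H}\subset L^2(\mu)$, so you are quietly strengthening the hypotheses, harmlessly in context. Second, over $\mathbb{C}$ the identity $w^{\top}YY^{+}=w^{\top}$ in the non-full-rank case requires $\bar w\in\mathrm{Range}(Y)$ rather than $w\in\mathrm{Range}(Y)$, since $(YY^{+})^{\top}=\overline{YY^{+}}$; this is immaterial in the full-rank case you reduce to, but your parenthetical about the general case glosses over it. Your reading of ``$w\in\mathpzc{E}\big(\overline{\mathrm{orbit}(x)}\big)$'' as membership in the span of the snapshot images is the only sensible interpretation of the theorem's (loosely worded) conclusion, and is what the argument in fact delivers.
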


On a finite orbit $\big(\tilde\Phi^k(x)\big)_{k=0}^{m-1}$, eigenvalues and eigenvectors of  $\mathpzc{A}$ approximate Koopman eigenvalues, eigenfunctions, and modes. Those can be derived by the dynamic mode decomposition (DMD) that was defined by Schmid and Sesterhenn in \cite[Schmidt, Sesterhenn]{schmidt-sesterhenn} and \cite[Schmidt]{schmidt} to extract the spatial flow structures that evolve linearly with time.

The DMD algorithm can be used to compute the augmented modes which approximate the Koopman modes. From \cite[Tu, Rowley, Luchtenburg, Brunton, Kutz]{tu-rowley-luchtenburg-brunton-kutz}, 
by taking the singular value decomposition put $Y=U\Sigma V^\ast$. Then, $\mathpzc{A}=Y'V\Sigma^{-1}U^\ast$.
For computational efficiency, take projection $\mathpzc{\tilde A}$ of $\mathpzc{A}$ onto the mode of  proper orthogonal decomposition (POD), 
by $\mathpzc{\tilde A}=U^\ast \mathpzc{A} U=U^\ast Y' V\Sigma^{-1}$ of an $n_0\!\times\! n_0$-matrix.

The matrix $\mathpzc{\tilde A}$ defines a low-dimensional linear model of the dynamical system on the POD coordinates $\tilde{y}_{r+1}=\mathpzc{\tilde A} \tilde{y}_r$ and the reconstruction to the high-dimensional state follows as $y = U \tilde{y}$. By Eigen-decomposition, $\mathpzc{\tilde A}W = W\Lambda$, where columns of $W$ are eigenvectors and $\Lambda$ is a diagonal matrix containing the corresponding eigenvalues $\lambda_k$.
Thus, the eigenvectors of $\mathpzc{A}$ (DMD modes) are given by columns of $\Phi$, 
\begin{equation*} 
	\Phi = Y'V\Sigma^{-1} W.
\end{equation*}

With the low-rank decomposition in hand, the projected future solution can be constructed for all time. With time step $h_t$, the solutions at all future times are approximated by
\begin{equation} \label{KMD approximation}
	y_r \approx \Phi \exp(\Omega r)B,
\end{equation}
where 
$\Omega=\diag(\ln(\lambda_k)/h_t)$ is a diagonal matrix, and
$B$ consists of the initial amplitudes of each mode, precisely, $B=\Phi^+ y$ at time $t=0$.

As a consequence of Theorem \ref{koopman theorem} and (\ref{KMD approximation}),  the inversion $\mathpzc{E}^{-1}$ from observables back to state-space yields $x_r$.

\begin{corollary} \label{data generation} 
If there exists the inverse of $\mathpzc{E}$ such that $\mathpzc{E}(x) = y$, then
\begin{equation*} 
	x_r \approx \mathpzc{E}^{-1}( \Phi \exp(\Omega r)B).
\end{equation*}
\end{corollary}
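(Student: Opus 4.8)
The plan is essentially to apply the observable inverse $\mathpzc{E}^{-1}$ to the DMD reconstruction formula (\ref{KMD approximation}), once its right-hand side is identified with the ensemble $\mathpzc{E}$ evaluated along the orbit of $x$. First I would recall that, by (\ref{snapshot}), the columns of the snapshot matrices $Y$ and $Y'$ are precisely the ensemble values $\mathpzc{E}(\tilde\Psi^{k}(x))$ along a single trajectory starting at $x$, so that the low-rank model $\mathpzc{\tilde A}$ together with the lift $U$ produces, at each discrete time $r$, a reconstructed observable $y_r$ with $y_r \approx \mathpzc{E}(\tilde\Psi^{r}(x))$. By Theorem \ref{koopman theorem} and Remark \ref{denseness}, along an almost-everywhere dense orbit the eigenpairs of $\mathpzc{A}$ (hence of $\mathpzc{\tilde A}$) approximate the Koopman eigenvalues $\lambda_k$ and the columns of $\Phi = Y'V\Sigma^{-1}W$ approximate the Koopman modes $V_k$ of (\ref{koopman expansion2}), while $\exp(\Omega r)$ with $\Omega = \diag(\ln(\lambda_k)/h_t)$ reproduces the factors $\lambda_k^{r}$. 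Consequently $\Phi\exp(\Omega r)B$ is the truncated Koopman expansion $\sum_k \lambda_k^{r} V_k \phi_k(x)$ evaluated at time $r$, which is exactly $\mathpzc{E}(\tilde\Psi^{r}(x))$ up to the truncation and DMD errors; in other words (\ref{KMD approximation}) reads $\mathpzc{E}(x_r) \approx \Phi\exp(\Omega r)B$ with $x_r = \tilde\Psi^{r}(x)$.

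Next, under the hypothesis that $\mathpzc{E}$ is invertible on the invariant compact manifold $\mathpzc{M}$ with $\mathpzc{E}(x) = y$, I would apply $\mathpzc{E}^{-1}$ to both sides of $\mathpzc{E}(x_r) \approx \Phi\exp(\Omega r)B$. Since $\mathpzc{M}$ is compact and $\mathpzc{E}$ is a continuous bijection, $\mathpzc{E}^{-1}$ is (uniformly) continuous, so it carries the approximate identity over: $x_r = \mathpzc{E}^{-1}(\mathpzc{E}(x_r)) \approx \mathpzc{E}^{-1}(\Phi\exp(\Omega r)B)$, which is the claimed formula. The identity $B = \Phi^{+}y$ at $t=0$ makes this consistent at $r=0$, since then $\Phi\exp(\Omega\cdot 0)B = \Phi\Phi^{+}y$ is the projection of $y = \mathpzc{E}(x)$ onto the range of $\Phi$, and $\mathpzc{E}^{-1}$ of it returns $x = x_0$ whenever $\mathpzc{E}(x)$ lies in that range.

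The only delicate point, and the one I would phrase carefully rather than ``grind through,'' is that the conclusion chains several ``$\approx$'' relations: the truncation error of the infinite Koopman expansion (\ref{koopman expansion2}), the DMD approximation error on a finite orbit, and the modulus of continuity of $\mathpzc{E}^{-1}$. None of these is a genuine obstacle, because Theorem \ref{koopman theorem} together with Remark \ref{denseness} guarantees that the first two tend to zero as the orbit length $m \to \infty$ for almost every initial $x$, and compactness of $\mathpzc{M}$ controls the third; since the statement is itself asymptotic, it suffices to record that each of these errors vanishes.
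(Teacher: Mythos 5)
Your proposal is correct and follows essentially the same route as the paper, which treats the corollary as an immediate consequence of Theorem \ref{koopman theorem} and the reconstruction formula (\ref{KMD approximation}): identify $\Phi\exp(\Omega r)B$ with $y_r=\mathpzc{E}(\tilde\Psi^r(x))$ and apply $\mathpzc{E}^{-1}$. Your added remarks on the continuity of $\mathpzc{E}^{-1}$ on the compact manifold and on the consistency at $r=0$ via $B=\Phi^{+}y$ are sound elaborations of what the paper leaves implicit.
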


If $\mathpzc{E}$ is identified, e.g., $f_k(x)=x_k$, then Corollary \ref{data generation} generates data which follows (\ref{ds})
and 
we obtain sufficiently large size of data.
For more information for the KMD and DMD, 
refer to \cite[Korda, Mezi\'c]{korda-mezic}, \cite[Arbabi, Mezi\'c]{arbabi-mezic}. 

\begin{remark}
\begin{enumerate}[$(i)$]
\item The ergodic property in Theorem \ref{koopman theorem} can be replaced with the \textsc{i.i.d.} property.
\item The ergodicity allows us to reduce in computational cost in contrast to the \textsc{i.i.d.} property 
and to raise accuracy up for learning.
\end{enumerate}
\end{remark}

\smallskip

\section{$L^1$-stability of empirical distribution functions}

In this section, it will be discussed the convergence of empirical  distribution functions for  sample vectors.
Suppose that  a stochastic vector process of $(X_r)$ 
from a governing probability $p_0$ of $X$
with values in $\mathpzc{M}$.
Let $F$ be the (multivariate) cumulative distribution function of $X$. 
Since the set of discontinuities is an $F_\sigma$ set,
by Cavalieri's principle, the componentwise monotonicity of $F$ permits continuity except possibly measure zero set. 

The multivariate empirical distribution function of $(X_r)$ is given by
\begin{equation} \label{multi-empirical}
	\hat F_m(x)=\frac1m\sum_{r=1}^m\mathbbm{1}_{X_r\le x},
\end{equation}
where 
$X_r\le x$ means $X_{r,k}\le x_k$ componentwise.  
If $(X_r)$ is a sequence of \textsc{i.i.d.} random vectors, then $\hat F_m(x)$ converges to $F$ in the supremum norm  
by Vapnik-Chervonenkis theorem  \cite[p.823, Example 1 of p.833]{shorack-wellner} 
a generalization of Glivenko-Cantelli theorem.


For  a sequence of locally integrable functions, 
we say that the sequence is $L^1$-stable if the sequence 
is a Cauchy sequence in  
$L^1(\mathbb{R}^n)$. 
We prove the $L^1$-stability of multivariate empirical distribution functions of (\ref{multi-empirical}) 
on $\mathbb{R}^n$,  
even if those may not be integrable.

\begin{theorem} \label{norm convergence}
If $(X_r)$ is a stationary ergodic process from $p_0$ on $\mathbb{R}^n$ such that
$\mathbb{E}(|X_{1,j}|)<\infty$ $(1\le j\le  n)$, then 
$\hat F_m$ converges pointwise to $F$  almost everywhere and is $L^1$-stable.
\end{theorem}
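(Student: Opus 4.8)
The plan is to prove the two assertions separately. \emph{For the pointwise convergence}, I would fix $x\in\mathbb{R}^n$ and note that $\mathbbm{1}_{\{X_1\le x\}}$ is a bounded, hence integrable, functional of the stationary ergodic process, so the Birkhoff ergodic theorem gives
\begin{equation*}
\hat F_m(x)=\frac1m\sum_{r=1}^m\mathbbm{1}_{\{X_r\le x\}}\longrightarrow\mathbb{E}\big(\mathbbm{1}_{\{X_1\le x\}}\big)=F(x)
\end{equation*}
on an event $\Omega_x$ of probability one. To upgrade this to a single full-probability event carrying convergence at Lebesgue-a.e.\ $x$, I would run the classical Glivenko--Cantelli sandwich: by the Cavalieri/monotonicity remark preceding the statement the discontinuity set of $F$ is Lebesgue-null, so one may choose a countable set $D$ dense among the continuity points of $F$; on $\Omega_0=\bigcap_{x\in D}\Omega_x$ the componentwise monotonicity of $\hat F_m$ and of $F$, together with continuity of $F$ at $x$, traps $\hat F_m(x)$ between values $\hat F_m(x^{\mp})$ at points $x^{\mp}\in D$ arbitrarily close to $x$, and letting $m\to\infty$ gives $\hat F_m(x)\to F(x)$. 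Hence for every $\omega\in\Omega_0$ the convergence holds for a.e.\ $x$.

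\emph{For the $L^1$-stability}, by the triangle inequality it suffices to show $\|\hat F_m-F\|_{L^1}\to0$. Let $\mathbbm{1}_{\mathbf{0}}$ denote the indicator of the closed positive orthant $\{x:x_j\ge0\ \text{for all }j\}$; this reference function is the standard device that makes a cumulative distribution function integrable. The first-moment hypothesis is precisely what renders $\hat F_m-\mathbbm{1}_{\mathbf{0}}$ and $F-\mathbbm{1}_{\mathbf{0}}$ integrable with controllable norm, through the bookkeeping estimate
\begin{equation*}
\int_{\mathbb{R}^n}\big|\mathbbm{1}_{\{X_r\le x\}}-\mathbbm{1}_{\mathbf{0}}(x)\big|\,dx\ \lesssim_n\ \sum_{j=1}^n|X_{r,j}|,
\end{equation*}
so that $\|\hat F_m-\mathbbm{1}_{\mathbf{0}}\|_{L^1}\lesssim_n\frac1m\sum_{r=1}^m\sum_{j}|X_{r,j}|$ while $\|F-\mathbbm{1}_{\mathbf{0}}\|_{L^1}\lesssim_n\sum_{j}\mathbb{E}|X_{1,j}|<\infty$; applying the Birkhoff theorem to the integrable observable $\sum_{j}|X_{1,j}|$ shows the former tends a.s.\ to the latter (in one dimension these are the exact identities $\int|\hat F_m-\mathbbm{1}_{\mathbf{0}}|=\frac1m\sum_r|X_r|$ and $\int|F-\mathbbm{1}_{\mathbf{0}}|=\mathbb{E}|X_1|$, and the multivariate case is the same domination scheme). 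Then the nonnegative functions $h_m:=|\hat F_m-\mathbbm{1}_{\mathbf{0}}|+|F-\mathbbm{1}_{\mathbf{0}}|$ dominate $|\hat F_m-F|$, converge pointwise a.e.\ (on $\Omega_0$) to $h:=2|F-\mathbbm{1}_{\mathbf{0}}|$, and satisfy $\int h_m\to\int h<\infty$, so the generalized dominated convergence theorem (Pratt's lemma) forces $\int_{\mathbb{R}^n}|\hat F_m-F|\,dx\to0$, which is exactly the asserted $L^1$-Cauchy property.

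The hard part will be this integrability/tail step: arranging the dominating sequence $h_m$ with $h_m\to h$ a.e.\ and $\int h_m\to\int h<\infty$. That is where finiteness of the first absolute moments is indispensable---it is what keeps the tails of $\hat F_m-F$ uniformly small---and where the full-probability Glivenko--Cantelli event from the first part has to be reused; the pointwise statement, by contrast, is a routine ergodic adaptation of Glivenko--Cantelli. I would also remark that under the \textsc{i.i.d.}\ assumption one has the stronger uniform (Vapnik--Chervonenkis) convergence recalled just above the statement, but with only ergodicity in hand one must argue through $L^1$ as above.
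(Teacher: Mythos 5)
Your first part (pointwise convergence) is essentially the paper's argument carried out more carefully: the paper invokes Birkhoff for the observables $f_x=\mathbbm{1}_{\,\cdot\,\le x}$ and appeals to monotonicity and Cavalieri, while you add the countable dense set and the monotone sandwich that is actually needed to pass from ``for each fixed $x$, a.s.'' to ``a.s., for a.e.\ $x$''. That part is correct.

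The $L^1$ part has a genuine gap, precisely at the step you flagged as the hard one. The bookkeeping estimate
\begin{equation*}
\int_{\mathbb{R}^n}\big|\mathbbm{1}_{\{X_r\le x\}}-\mathbbm{1}_{\mathbf 0}(x)\big|\,dx\ \lesssim_n\ \sum_{j=1}^n|X_{r,j}|
\end{equation*}
is false for $n\ge2$: the integrand is the indicator of the symmetric difference of the two translated orthants $\{x\ge X_r\}$ and $\{x\ge 0\}$, and for $n\ge2$ that set contains unbounded slabs (for $X_r=(1,1)$ in $\mathbb{R}^2$ it contains $\{0\le x_1<1,\ x_2\ge 0\}$), so the left-hand side is $+\infty$ whenever $X_r\ne 0$. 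Consequently $\hat F_m-\mathbbm{1}_{\mathbf 0}$ and $F-\mathbbm{1}_{\mathbf 0}$ are not integrable, your dominating functions $h_m$ are not in $L^1$, and Pratt's lemma cannot be launched. Worse, the intermediate target $\|\hat F_m-F\|_{L^1}\to 0$ is itself unattainable for $n\ge2$: as $x_1\to+\infty$ with the remaining coordinates fixed, $\hat F_m(x)-F(x)$ tends to the difference between the empirical and true marginal distribution functions of the last $n-1$ coordinates, which is nonzero on a set of positive measure for finite $m$, so $\hat F_m-F\notin L^1(\mathbb{R}^n)$. Your scheme is exact and clean in dimension one, where $\int|\hat F_m-\mathbbm{1}_{[0,\infty)}|=\frac1m\sum_r|X_r|$ and $\int|F-\mathbbm{1}_{[0,\infty)}|=\mathbb{E}|X_1|$ and Pratt/Scheff\'e closes the argument, but it does not extend to $n\ge2$ ``by the same domination scheme''. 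The paper proceeds differently: it works with $\hat F_m-\hat F_{m'}$ directly, splits $\mathbb{R}^n$ into $[-R,R]^n$ and its complement, kills the compact part by dominated convergence plus Birkhoff, and controls the tail by applying Birkhoff to the truncated-moment observables $(|X_1^b|-R)\mathbbm{1}_{|X_1^b|\ge R}$ before letting $R\to\infty$; any repair of your argument will need a truncation device of this kind (or a weighted/local $L^1$ framework) rather than a single reference indicator, and even then the integration over the unbounded $(n-1)$-dimensional cross-sections is the delicate point that must be addressed explicitly.
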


\begin{proof} 
For each $x$ put  $f_x(\,\cdot\,)=\mathbbm{1}_{\,\cdot\,\le x}\in L^1(\mu)$, where $d\mu(x)=p_0(x)\,dx$.
By Birkhoff ergodic theorem, 
there exists a set of almost all initial samples such that 
\begin{equation} \label{ergodic identity}
	\hat F_m(x)=\frac1m \sum_{r=1}^m f_x(X_r)\to \mathbb{E}(f_x)=F(x)
\end{equation}
at almost every $x$ as $m\to\infty$.  
In fact, the componentwise monotonicity of $\hat F_m$ implies the componentwise monotonicity of the limit of $\hat F_m$.
By Cavalieri's principle, the limit is continuous except at most measure zero set.

As $\hat F_m-\hat F_{m'}$ is a linear combination of indicator functions, it vanishes if a vector variable is larger than every $x_k$. 
It is supported compactly and belongs to $L^1(\mathbb{R}^n)$,
accordingly the integral of $|\hat F_m-\hat F_{m'}|$ is well defined for $m$ and $m'$.
Let $R$ be a positive integer and split $\mathbb{R}^n$ into two parts of
\begin{equation*}
	\mathbb{R}^n = [-R,R]^n \cup \mathbb{R}^n\setminus[-R,R]^n.
\end{equation*}
Then
\begin{equation} \label{split}
\begin{aligned}
	\int_{\mathbb{R}^n}|\hat F_m-\hat F_{m'}|\,dx
		&= \int_{[-R,R]^n} |\hat F_m-\hat F_{m'}|\,dx \\
		&\qquad + \int_{\mathbb{R}^n\setminus[-R,R]^n} |\hat F_m-\hat F_{m'}|\,dx \\
		&\equiv I + II.
\end{aligned}		
\end{equation}

Estimate of $I$: 
By the compactness of $[-R,R]^n$,
the Lebesgue dominated convergence theorem and Birkhoff ergodic theorem imply 
\begin{equation} \label{estimate of I}
	I \to 0
\end{equation}	
as $m,m'\to\infty$.

Estimate of $II$: 
The region can be written the union of two part separations, precisely, 
\begin{equation*}
	\mathbb{R}^n\setminus[-R,R]^n
	= \bigcup_{k=0}^{n-1}\mathbb{R}^{k}\times\left(\mathbb{R}\setminus [-R,R] \right)\times \mathbb{R}^{n-1-k}.
\end{equation*}

Let $b=\{0,1\}^n$ be a binary multi-index and $b'$ the negation of $b$.
The notation of $x^b$ means 
the selection of corresponding components $x_{j}$ when $b_j=1$.
By Fubini's theorem,
\begin{align}
	II &\le \sum_{|b|=1}
			\int_{\mathbb{R}^{n-1}} 
			\int_{\mathbb{R}\setminus[-R,R]} 
			|\hat F_m-\hat F_{m'}|
			(dx)^b(dx)^{b'} \notag \\
	       &\equiv \sum_{|b|=1}
	       		\int_{\mathbb{R}^{n-1}} V_{m,m'}(x^{b'})(dx)^{b'}, \label{estimate of II}
\end{align}	
where  $(dx)^b= dx^b$ and
\begin{equation*}
	V_{m,m'}(x^{b'})
		= \int_{\mathbb{R}\setminus[-R,R]} 
			|\hat F_m-\hat F_{m'}|(dx)^b.
\end{equation*}

Estimate of $V_{m,m'}(x^{b'})$: By the change of variables,
\begin{equation*}
\begin{aligned}
	V_{m,m'}(x^{b'})
		&= \int_{(-\infty,-R]} |\hat F_m-\hat F_{m'}|(dx)^b   \\
		&\qquad\quad\qquad	+ \int_{[R,\infty)} |\hat F_m-\hat F_{m'}|(dx)^b  \\
		&= \int_{[R,\infty)} |\hat F_m(-1\odot^b x)
				-\hat F_{m'}(-1\odot^b x)|(dx)^b   \\
		&\qquad\quad\qquad + \int_{[R,\infty)} |\hat F_m-\hat F_{m'}|(dx)^b, 
\end{aligned}
\end{equation*}
where $-1\odot^b x$ is the componentwise multiplication only for non-zero components of $b$. 
By the triangle inequality, the last sum of integrals is bounded by
\begin{align}
		\int_{[R,\infty)} & \hat F_m(-1\odot^b x) + \hat F_{m'}(-1\odot^b x)(dx)^b  \notag \\
		&\qquad + \int_{[R,\infty)} |\hat F_m-1|+|1-\hat F_{m'}|(dx)^b \notag \\
		&= \int_{[R,\infty)} \hat v_m(x)(dx)^b  + \int_{[R,\infty)} \hat v_{m'}(x)(dx)^b \notag \\
		&\equiv A_b+B_b,	\label{estimate of V_m}		
\end{align}
where 
\begin{equation*}
\begin{aligned}
	\hat v_m(x) &= \hat F_m(-1\odot^b x) + 1 - \hat F_m(x), \\
	            \hat v_{m'}(x) &= \hat F_{m'}(-1\odot^b x) + 1 - \hat F_{m'}(x).
\end{aligned}
\end{equation*}

Estimate of $A_b$ and $B_b$: Since $\hat F_m(x)$ and $\hat F_{m'}(x)$ are sums of products of Heaviside functions, 
by Birkhoff ergodic theorem,
\begin{align}
	A_b 
	   &= \frac1m \sum_{r=1}^m  (|X_{r}^b|-R) \mathbbm{1}_{|X_{r}^b|\ge R} \notag \\
	   &\quad \longrightarrow
	   \mathbb{E}\left( (|X_{1}^b|-R) \mathbbm{1}_{|X_{1}^b|\ge R} \right)_{p_0}  \label{estimate of A}
\end{align}
as $m'\to\infty$.
Similarly,
\begin{equation}  \label{estimate of B}
	B_b  \longrightarrow
	   \mathbb{E}\left( (|X_{1}^b|-R) \mathbbm{1}_{|X_{1}^b|\ge R} \right)_{p_0} 
\end{equation}
as $m'\to\infty$.

Finally, according to (\ref{split}) $\sim$ (\ref{estimate of B}),
\begin{align}
	&\limsup_{m.m'\to\infty}  \int_{\mathbb{R}^n}   |\hat F_m-\hat F_{m'}|\,dx \notag \\
		&\le \limsup_{m,m'\to\infty} 
			\sum_{|b|=1} \int_{\mathbb{R}^{n-1}} V_{m,m'}(x^{b'})(dx)^{b'} \notag \\
		&\le  \sum_{|b|=1} \int_{\mathbb{R}^{n-1}} 
			\limsup_{m,m'\to\infty}
			V_{m,m'}(x^{b'})(dx)^{b'} \notag \\
		&= 2 \sum_{|b|=1} \int_{\mathbb{R}^{n-1}}
			\E\big(|X_{1}^b|-R)\mathbbm{1}_{|X_{1}^b| \ge R}\big)_{p_0}(dx)^{b'} \notag \\
		&= 2  \int_{\mathbb{R}^{n-1}}
			\E\left(\sum_{|b|=1}(|X_{1}^b|-R)\mathbbm{1}_{|X_{1}^b| \ge R}\right)_{p_0}(dx)^{b'}. \label{ergodic vanishing}
\end{align}
By the Lebesgue dominated convergence theorem, we can interchange the limit of $R\to\infty$ and two integrals of (\ref{ergodic vanishing}).
Then, the integrand of  (\ref{ergodic vanishing})  goes to 0. 
Therefore, $\hat{F}_m$ is $L^1$-stable for
almost every initial sample.
\end{proof}


\section{Estimations}
We describe how the designed and learned neurons respond to a signal.
For convenience, we define a lexicographical ordering to classify neurons. 
For $\alpha,\,\beta\in\mathbb{Z}^n$, define the ordering by
\begin{equation*}
	\alpha < \beta
\end{equation*} 
if either 
\begin{equation*}
	|\alpha| = \sum_i^n |\alpha_i| < \sum_i^n |\beta_i| = |\beta|
\end{equation*}
or 
\begin{equation*}
	\sum_i^n |\alpha_i| = \sum_i^n |\beta_i|\;\mbox{ and }\;
	\alpha_i<\beta_i
\end{equation*}
for the largest $i$ such that  $\alpha_i\ne \beta_i$.  
If $\alpha_i=\beta_i$ for all $i$, then we define $\alpha=\beta$.  
For example, let $n=2$. Putting $\bar m=-m$ for $m>0$, we have
%

\tikzstyle{materia}=[draw, fill=blue!20, text width=6.0em, text centered,
  minimum height=1.5em,drop shadow]
\tikzstyle{practica} = [materia, text width=9em, minimum width=10em,
  minimum height=3em, rounded corners, drop shadow]
\tikzstyle{decision} = [diamond, materia, text width=9em, minimum width=10em,
  minimum height=3em, rounded corners, drop shadow]
\tikzstyle{texto} = [above, text width=6.0em, text centered]
\tikzstyle{line} = [draw, thick, color=black!50, -latex']
\tikzstyle{ur}=[draw, text centered, minimum height=0.01em]
 
\newcommand{\blockdist}{1.3}
\newcommand{\edgedist}{1.5}

\newcommand{\practica}[2]{node (p#1) [practica]
  {\scriptsize{#2}}}
\newcommand{\decision}[2]{node (p#1) [decision]
  {\scriptsize{#2}}}

\newcommand{\background}[5]{%
  \begin{pgfonlayer}{background}
    \path (#1.west |- #2.north)+(-0.5,0.35) node (a1) {};
    \path (#3.east |- #4.south)+(+0.5,-0.35) node (a2) {};
    \path[fill=yellow!20,rounded corners, draw=black!50, dashed]
      (a1) rectangle (a2);
    \path (a1.east |- a1.south)+(0.1,-0.3) node (u1)[texto]
      {\scriptsize{#5}};
  \end{pgfonlayer}}

\newcommand{\transreceptor}[3]{%
  \path [linepart] (#1.east) -- node [above]
    {\scriptsize Transreceptor #2} (#3);}

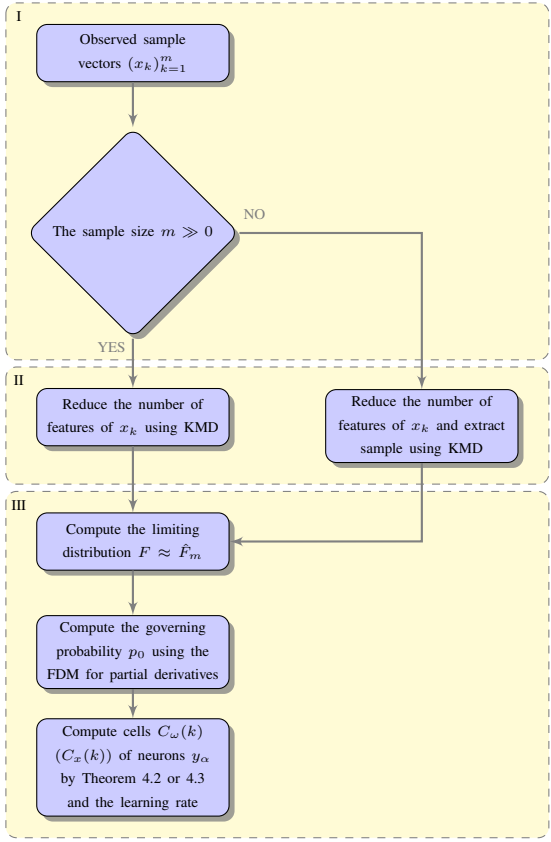
\begin{figure}[H]
\centering
\begin{tikzpicture}[scale=0.8,transform shape]
 
  \path \practica {1}{Observed sample vectors $(x_k)_{k=1}^m$};
  \path (p1.south)+(0.0,-2.5) \decision{2}{The sample size  $m\gg0$};
  \path (p2.south)+(0.0,-1.3) \practica{3}{Reduce the number of features of $x_k$ using KMD};
  \path (p2.east)+(3.0,-3.2) \practica{4}{Reduce the number of features of $x_k$ and extract sample using KMD};
  
  \path (p3.south)+(0.0,-1.58) \practica{5}{Compute the limiting distribution $F\approx \hat{F}_m$};
  \path (p5.south)+(0.0,-1.35) \practica{6}{Compute the governing probability $p_0$ using the FDM for partial derivatives};
  \path (p6.south)+(0.0,-1.3) \practica{7}{Compute cells $C_\omega(k)$ $(C_x(k))$ of neurons $y_\alpha$ by Theorem \ref{PNN by frequency} or  \ref{PNN by moment} and the learning rate};
     
  \path [line] (p1.south) -- node [above] {} (p2);

  \path [line] (p2.south) -- +(0.0,-0.475) 
    -- node [pos=-1.0, left] {\scriptsize YES} (p3);
  \path [line] (p2.east) -- +(3.0, 0.0) -- +(3.0,-2.4)
    -- node [above,xshift=-2.75cm,yshift=2.6cm] {\scriptsize NO} (p4);

  \path [line] (p3.south) -- +(0.0,-0.30) -- node [above] {} (p5) ;
  \path [line] (p4.south) -- +(0.0,-1.3) -- node [above] {} (p5) ;
  \path [line] (p5.south) -- +(0.0,-0.1) -- node [above] {} (p6) ;
  \path [line] (p6.south) -- +(0.0,-0.1) -- node [above] {} (p7) ;

  \background{p3}{p1}{p4}{p2}{I}
  \background{p3}{p3}{p4}{p4}{II}
  \background{p3}{p5}{p4}{p7}{III}

\end{tikzpicture}  
\caption{Block diagram for learning: Part I is the area to generate more samples. Part II controls the KMD and the learning process is achieved in Part III.}
\label{block diagram for learning}
\vspace{-0.0cm}
\end{figure}

\vspace{-0.7cm}

\hfsetfillcolor{green!10}
\hfsetbordercolor{green!20!black!30}
\begin{equation*}
	\underbrace{\scriptstyle 00}_{\text{Index of DC}} \!\! <
	\underbrace{\scriptstyle 0\bar1<\bar10<10 <01}_{\text{Indexes of $1$-cell}} <\;
	\underbrace{\scriptstyle 0\bar2<\bar1\bar1<1\bar1<\bar11<\bar20<20<11<02}_{\text{Indexes of $2$-cell}} \;< \cdots.	 
\end{equation*}

Neurons in Theorems \ref{PNN by frequency} and \ref{PNN by moment} are grouped into
the $k$th ordered cells $C_{\omega}(k)$ and $C_x(k)$, respectively. These are vector-valued functions from $\mathbb{Z}^\ast$ defined by
\begin{equation*}
	C_\omega(k)=
	\begin{pmatrix}
	\mid  \\
	y_\alpha  \\
	\mid  \\
	\end{pmatrix}_{|\alpha|=k}\kern -1em \in\mathbb{C}^{{}_{n}\mathsf{\kern -1pt H}_k}, \quad
	C_x(k)=
	\begin{pmatrix}
	\mid  \\
	y_\alpha  \\
	\mid  \\
	\end{pmatrix}_{|\alpha|=k} \kern -1em\in \mathbb{R}^{{}_{n}\mathsf{\kern -1pt H}_k}
\end{equation*}
in the lexicographical ordering, where ${}_{n}\mathsf{\kern -1pt H}_k$ is a combination of $n$ things taken $k$ at a time with repetition.
The $k$th  evaluation vectors $V_\omega(k)$ and $V_x(k)$  of a signal $x$ are ordered vector-valued functions from $\mathbb{Z}^\ast$ defined by
\begin{equation*}
	V_\omega(k)=
	\begin{pmatrix}
	\mid  \\
	\omega_\alpha(x) \\
	\mid  \\
	\end{pmatrix}_{|\alpha|=k} \kern -1.5em\in\mathbb{C}^{{}_{n}\mathsf{\kern -1pt H}_k},\quad
	V_x(k)=
	\begin{pmatrix}
	\mid  \\
	x^\alpha \\
	\mid  \\
	\end{pmatrix}_{|\alpha|=k} \kern -1.5em \in\mathbb{R}^{{}_{n}\mathsf{\kern -1pt H}_k}.
\end{equation*}
For convenience, set $C_\omega(0)=C_x(0)=(D_c)$ and $V_\omega(0)=V_x(0)=(1)$ of a singleton. 

\smallskip

\noindent\textsc{Active path.}
We will select the best matching neurons for a signal, namely, the active path. Those are chosen 
so that the likelihood of $x$ is maximized in the following way.
From $|e^{-E}|=e^{-\Re{E}}$, 
collect indexes $\alpha$ such that  $\Re (C_w(k)\odot V_w(k))_\alpha < 0$ $\big((C_x(k)\odot V_x(k))_\alpha < 0, \mbox{resp.}\big)$ 
and put it by
\begin{equation*}
\begin{aligned}
	\Gamma_k &= \begin{pmatrix}
	\mid \\
	\alpha \\
	\mid\\
	\end{pmatrix},
\end{aligned}	
\end{equation*}
where $(A\odot B)_\alpha$ is the $\alpha$th component of the Hadamard product of $A$ and $B$.
The active path for $x$ is defined by
\begin{equation*}
	\Gamma=\bigcup_{k=0}^\infty\big\{\Gamma_k\big\}.
\end{equation*}

We say that any subset $\mathpzc{D}$  of indexes is a dictionary if an index size is at most $N$ so that 
$\|\hat F_m - \hat F_{m'}\|_{L^1}<\epsilon$ for $m,m'\ge N$. 
Usually, an active path is considered in the dictionary.  
 So, we call
$\Gamma_{\mathpzc{D}}\equiv\Gamma\cap \mathpzc{D}$  the active path for $x$ with respect to $\mathpzc{D}$.

\newcommand{\backgroundlong}[5]{%
\begin{pgfonlayer}{background}
    \path (#1.west |- #2.north)+(-0.5,0.35) node (a1) {};
    \path (#3.east |- #4.south)+(+0.5,-1.8) node (a2) {};
    \path[fill=yellow!20,rounded corners, draw=black!50, dashed]
      (a1) rectangle (a2);
    \path (a1.east |- a1.south)+(0.1,-0.3) node (u1)[texto]
      {\scriptsize{#5}};
\end{pgfonlayer}}

\begin{figure}[H]
\centering
\begin{tikzpicture}[scale=0.8,transform shape]
 
  \path \practica {1}{A new signal $x$ and a dictionary $\mathpzc{D}$};
  \path (p1.south)+(0.0,-1.2) \practica{2}{Compute the evaluation vector $V_\omega(k)$ $(V_{x}(k))$};
  \path (p2.east)+(2.92,-7.6) \practica{2-1}{Compute the likelihood $p_0(x)$ of $x$ in the network};
  \path (p2.south)+(0.0,-2.6) \decision{3}{$\displaystyle \Re\!\big(C_\omega(k)\!\odot\! V_\omega(k)\big) <0$};
  \path (p2.east)+(2.92,-3.1) \practica{4}{Discard $\alpha$};
  \path (p3.south)+(0.0,-1.45) \practica{5}{Compute the active path $\Gamma_k\ni\alpha$};
  \path (p5.south)+(0.0,-1.0) \practica{6}{Active path on the dictionary of $\Gamma\cap \mathpzc{D}$};
     
  \path [line] (p1.south) -- node [above] {} (p2);

  \path [line] (p3.south) -- +(0.0,-0.475) 
    -- node [xshift=-0.4cm,yshift=0.6cm] {\scriptsize YES} (p5);
    
  \path [line] (p3.east) 
    -- node [above,xshift=-0.2cm,yshift=0.0cm] {\scriptsize NO} (p4);

  \path [line] (p2.south) -- +(0.0,-0.30) -- node [above] {} (p3) ;
  \path [line,] (p2.east) -- +(4.7,0.0) --+(4.7,-6.5) --+(2.92,-6.5)  --node [above] {} (p2-1) ;
  \path [line] (p5.south) -- +(0.0,-0.1) -- node [above] {} (p6) ;

  \background{p3}{p1}{p4}{p2}{I}
  \backgroundlong{p3}{p3}{p4}{p4}{II}
  \background{p3}{p5}{p2-1}{p6}{III}
    
\end{tikzpicture}  
\caption{Block diagram for estimation: Part I consists of a signal and a dictionary of indexes, in which the evaluation vector and likelihood are computed.
Part II chooses the indexes at which neurons have a negative projection to maximize a network probability. In Part III, an active path for the signal is estimated.}
\label{block diagram for estimation}
\vspace{-0.0cm}
\end{figure}
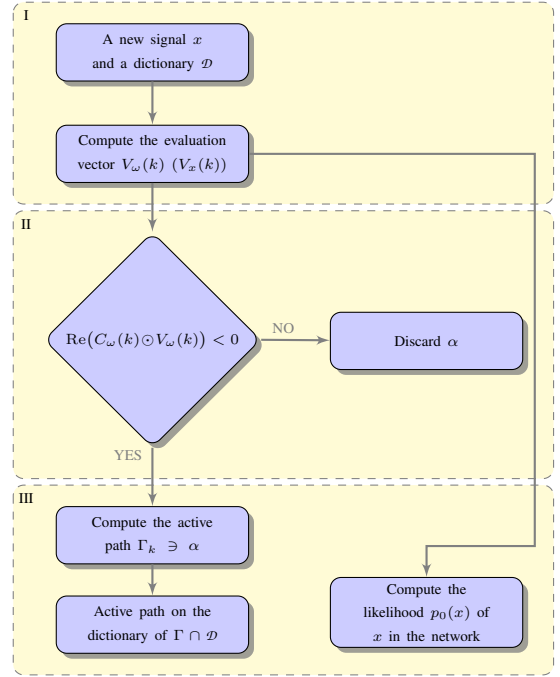

\smallskip
\noindent\textsc{Probability of active neuron.}
For a frequency learning, we define the probability of active neurons (POAN) on $\Gamma_{\mathpzc{D}}$ by
\begin{equation*}
	\bigcup_{k=0}^N
	\left\{ 
	\begin{pmatrix}
	\mid  \\
	-\Re\frac{(C_w(k)\odot V_w(k))_\alpha}{|(C_w(k)\odot V_w(k))_\alpha|} \\
	\mid  \\
	\end{pmatrix}\Big\lvert\, \alpha\in \Gamma_k\cap\mathpzc{D}
	\right\}.
\end{equation*}
For a moment learning, in extension to definition, each probability of active neurons must be $1$.

\smallskip
\noindent\textsc{Likelihood.}
The likelihood of $x$ is a non-negative real number induced by 
$p(x\mid y)$. The value means the possibility that  $x$  belongs to the network. 
The value is relatively large if and only if $x$ may be likely to happen in the network. 
See Examples \ref{Frequency learning ex}, \ref{Moment learning ex}, and Examples of section \ref{application to a standing wave}.

With energy functions assumed in advance, in the following examples we will explain how to calculate estimations of the network, i.e., an active path, probability of active neuron, and likelihood.
Figures \ref{block diagram for learning} and \ref{block diagram for estimation} show the learning and estimation procedures, respectively.

\begin{example}[Frequency learning] \label{Frequency learning ex}
Suppose that $E$ is given by
\begin{equation*}
	E(x_1,x_2;y ) 
	= -3\sin \pi x_1 +\cos \pi x_2  + 2\cos 2\pi x_2 
\end{equation*}
and  $(\nicefrac{1}{12},\nicefrac{1}{6})$ is a signal to be estimated by the network.

The energy function has the indexed form of 
\begin{equation*}
\begin{aligned}
	&-3\sin \pi x_1 +\cos \pi x_2  + 2\cos 2\pi x_2 \\
	&= \frac{3i}{2}(\omega_{10}-\omega_{\bar10}) 
		+ \frac{1}{2}(\omega_{01}+\omega_{0\bar1})  
		+ \omega_{02}+\omega_{0\bar2},
\end{aligned}	
\end{equation*}
its value of the partition function
\begin{equation*}
	Z = \int_{\mathbb{T}^2} e^{-E}dx = 47.9883,
\end{equation*}
and the DC value $D_c=\ln Z=3.8710$.
Thus, the network probability is given by
\begin{equation*}
	p(x_1,x_2\mid y) = e^{-3.8710 + 3 \sin \pi x_1 - \cos \pi x_2 -2 \cos 2\pi x_2}
\end{equation*}
(see Figure \ref{governing probability1}). 
Non-trivial cells are
\begin{equation*}
	C_\omega(0)=({\scriptstyle 3.8710}), \;
	C_\omega(1)=
	\begin{pmatrix*}[r]
	\nicefrac{1}{2}  \\ 
	\nicefrac{-3i}{2}  \\ 
	\nicefrac{3i}{2}  \\ 
	\nicefrac{1}{2}  
	\end{pmatrix*}, \;
	C_\omega(2)=
	\begin{pmatrix}
	\scriptstyle 1  \\
	\scriptstyle 0 \\
	\scriptstyle \vdots  \\
	\scriptstyle 0 \\
	\scriptstyle 1 
	\end{pmatrix}
\end{equation*}
and evaluation vectors that we need for $(\nicefrac{1}{12},\nicefrac{1}{6})$  are
\begin{equation*}
	V_\omega(0)=\big( {\scriptstyle 1 }\big),\;
	V_\omega(1)=
	\begin{pmatrix*}[r]
	\nicefrac{(1-\sqrt3i)}{2} \\ 
	\nicefrac{(\sqrt3-i)}{2} \\ 
	\nicefrac{(\sqrt3+i)}{2} \\ 
	\nicefrac{(1+\sqrt3i)}{2}  
	\end{pmatrix*}, \;
	V_\omega(2)=
	\begin{pmatrix}
	\nicefrac{(-1-\sqrt3i)}{2}  \\
	\vdots  \\[5pt]
	\nicefrac{(-1+\sqrt3i)}{2}   
	\end{pmatrix}.
\end{equation*}

Hence, for $(\nicefrac{1}{12},\nicefrac{1}{6})$, the active path,
POAN of $\Gamma$,
and  likelihood are as follows.
\begin{equation*}
	\Gamma =
	\left\{
	\begin{pmatrix}
	\scriptstyle \bar10 \\
	\scriptstyle 10 
	\end{pmatrix},\;
	\begin{pmatrix}
	\scriptstyle \bar20 \\
	\scriptstyle 20 
	\end{pmatrix}	
	\right\},\quad
	\mathrm{POAN}(\Gamma) =
	\left\{ 
	\begin{pmatrix}
	\scriptstyle \nicefrac{1}{2} \\
	\scriptstyle \nicefrac{1}{2} 
	\end{pmatrix},\;
	\begin{pmatrix}
	\scriptstyle \nicefrac{1}{2} \\
	\scriptstyle \nicefrac{1}{2} 
	\end{pmatrix}	
	\right\},
\end{equation*}
and    
\begin{equation*}
	p(\nicefrac{1}{12},\nicefrac{1}{6} \mid y) = 0.007,
\end{equation*}
respectively.

Since the likelihood is relatively small (Figure \ref{governing probability1}),  we may guess that $(\nicefrac{1}{12},\nicefrac{1}{6})$ may happen with small possibility in the network.
\end{example}

\begin{example}[Moment learning] \label{Moment learning ex}
Suppose that $E$ is defined by
\begin{equation*}
	E(x_1,x_2;y) 
	= -0.5x_1 - 2x_2 + 4x_1x_2 + 3x_2^2
\end{equation*}
and  $(-\nicefrac{4}{5},\nicefrac{4}{5})$ is a signal to be estimated by the network.

The indexed form of  $E$ is  
\begin{equation*}
\begin{aligned}
	& -0.5x_1 - 2x_2 + 4x_1x_2 + 3x_2^2 \\
	&= -0.5x^{10} - 2x^{01} + 4x^{11} + 3x^{02},
\end{aligned}	
\end{equation*}
its value of the partition function
\begin{equation*}
	Z = \int_{\mathbb{T}^2} e^{-E}dx = 4.2883
\end{equation*}
and the DC value $\ln Z=1.4559=D_c$.
So, the network probability is given by
\begin{equation*}
	p(x_1,x_2\mid y)
		= e^{-1.4559+0.5x_1 + 2x_2 - 4x_1x_2 - 3x_2^2}
\end{equation*}
(refer to Figure \ref{governing probability2}).
All non-trivial cells are
\begin{equation*}
	C_x(0)=({\scriptstyle 1.4559}), \;
	C_x(1)=
	\begin{pmatrix*}[r]
	\scriptstyle 0.5 \\
	\scriptstyle 2  \\
	\end{pmatrix*}, \;
	C_x(2)=
	\begin{pmatrix*}[r]
	\scriptstyle 0  \\
	\scriptstyle -4  \\
	\scriptstyle 0  \\
	\scriptstyle -3  
	\end{pmatrix*}
\end{equation*}
 and 
some evaluation vectors that we need for $(-\nicefrac{4}{5},\nicefrac{4}{5})$ are
\begin{equation*}
	V_x(0)=\big({\scriptstyle 1 }\big),\;
	V_x(1)=
	\begin{pmatrix*}[r]
	\scriptstyle -\nicefrac{4}{5} \\
	\scriptstyle \nicefrac{4}{5}  
	\end{pmatrix*}, \;
	V_x(2)=
	\begin{pmatrix}[r]
	\scriptstyle \nicefrac{16}{25}  \\
	\scriptstyle -\nicefrac{16}{25}  \\
	\scriptstyle -\nicefrac{16}{25} \\
	\scriptstyle \nicefrac{16}{25}   
	\end{pmatrix}.
\end{equation*}

Hence, for  $(-\nicefrac{4}{5},\nicefrac{4}{5})$, 
the active path, likelihood  are  
\begin{equation*}
	\Gamma =
	\left\{
	\begin{pmatrix}
	\scriptstyle 10 
	\end{pmatrix},\;
	\begin{pmatrix}
	\scriptstyle 02 
	\end{pmatrix}	
	\right\},
\end{equation*}
\begin{equation*}
	p(-\nicefrac{4}{5},\nicefrac{4}{5} \mid y) = 1.4683,
\end{equation*}
respectively.
The likelihood is relatively large (Figure \ref{governing probability1}), and so, we may guess that $(-\nicefrac{4}{5},\nicefrac{4}{5})$ may happen with large possibility in the network..
\end{example}

\begin{figure}[H]
\vspace{-0.1cm}
\centering
    \includegraphics[width=0.82\columnwidth,trim={1.5cm 1cm 1.25cm 0cm},clip]{./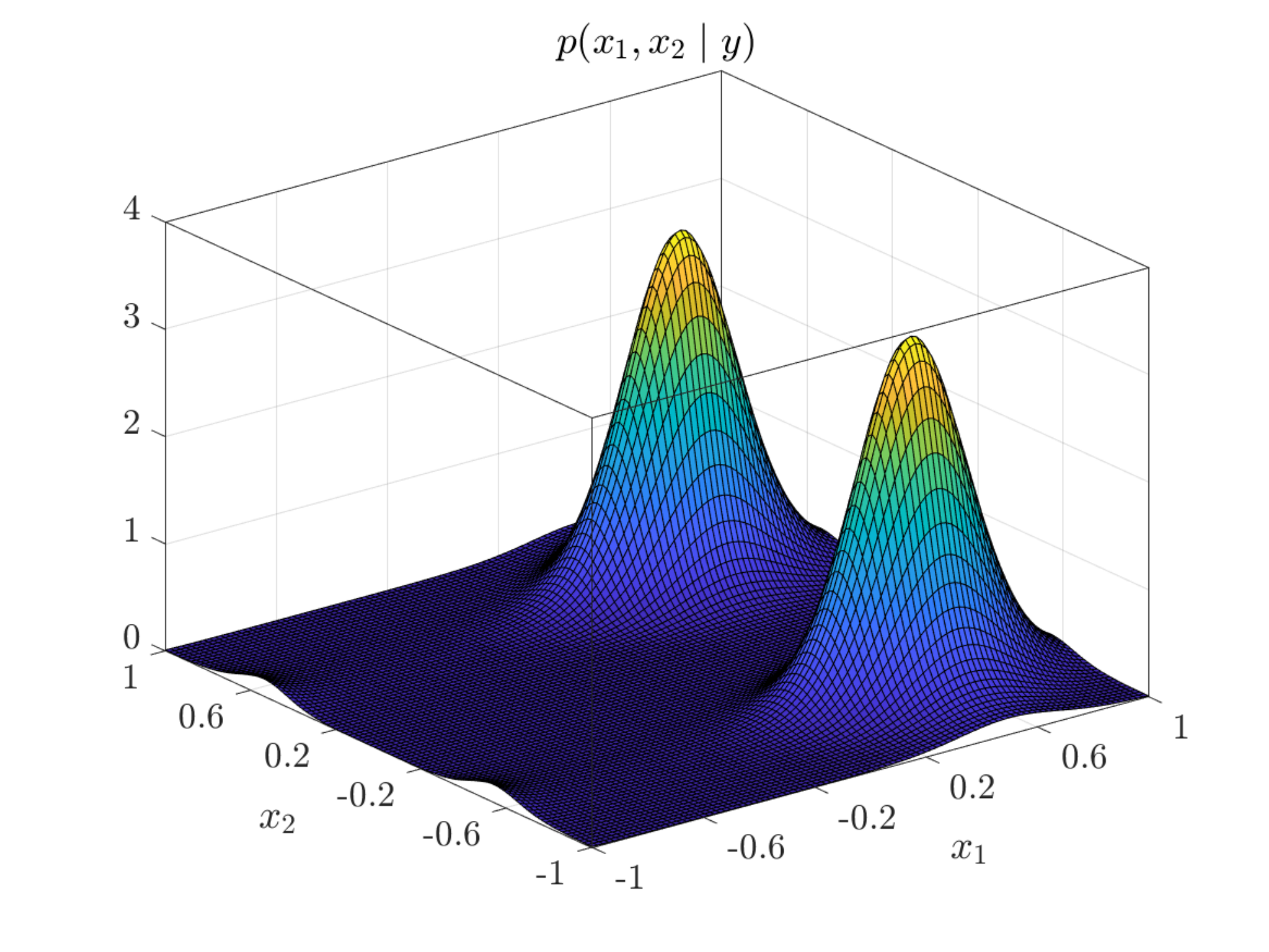}
\caption{The network  probability of Example \ref{Frequency learning ex}.}
\label{governing probability1}
\end{figure}

\begin{figure}[H]
\vspace{-0.5cm}
\centering
    \includegraphics[width=0.82\columnwidth,trim={1.5cm 1cm 1.25cm 0cm},clip]{./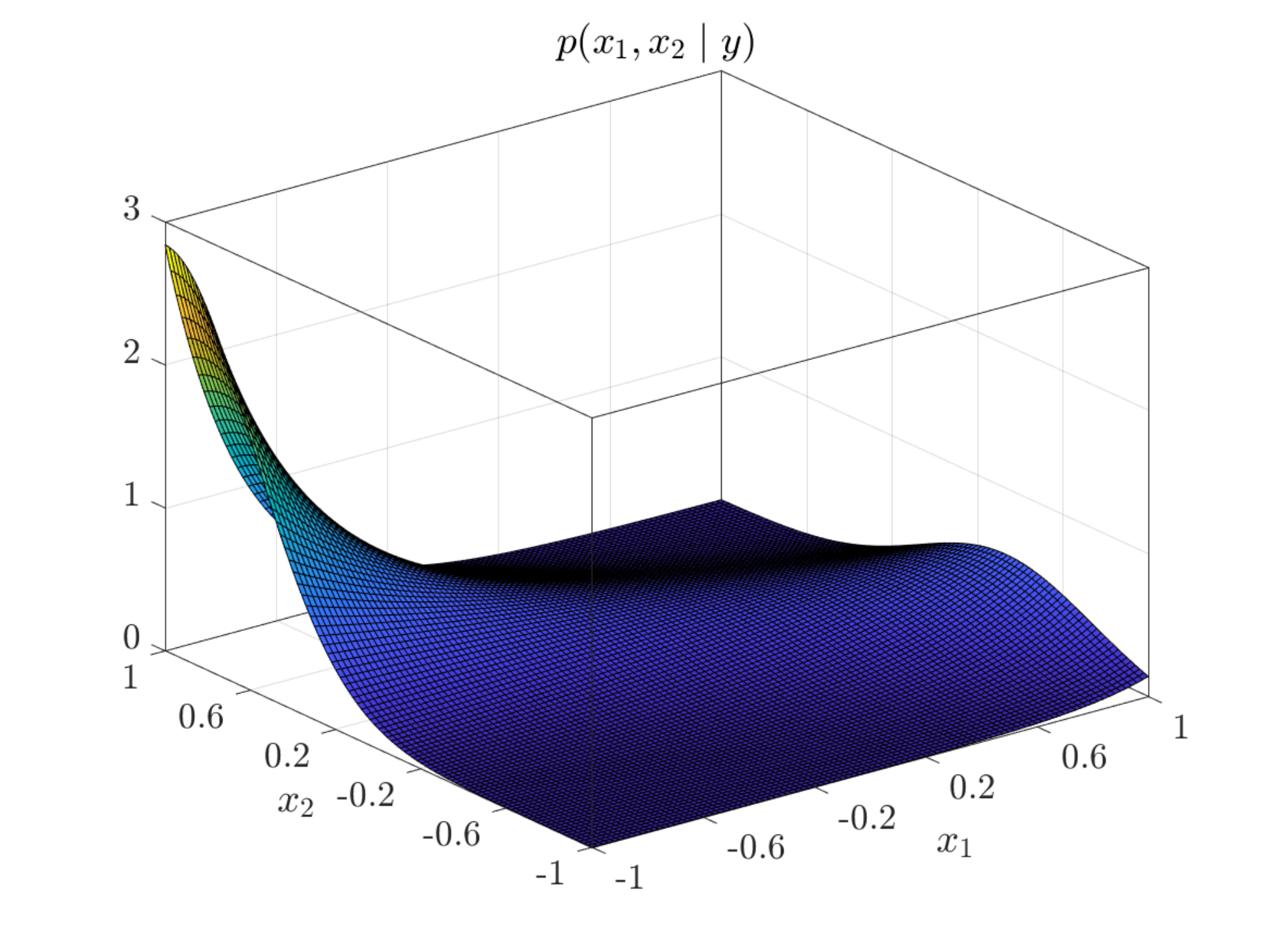}
\caption{The network probability of Example \ref{Moment learning ex}.}
\label{governing probability2}
\end{figure}

\smallskip
\noindent\textsc{Interpretation of estimations.}
To simplify what the active path says we 
give a topological structure on $\Gamma_{\mathpzc{D}}$.
Setting $d(y_\alpha,y_\beta)=|\alpha-\beta|$ on $\Gamma_{\mathpzc{D}}$, we define the number of basic open balls $N:\mathbb{Z}^\ast \to\bar{\mathbb{N}}$ by 
\begin{equation} \label{neighborhood}
	N(k)=\frac{1}{2}\big|k\mbox{-topology}\big|,
\end{equation}
where 
\begin{equation*}
	k\mbox{-topology}=\big\{(\alpha,\beta)\mid d(y_\alpha,y_\beta)=k\;\mbox{on}\;\Gamma\big\}
\end{equation*}
and $\bar{\mathbb{N}}$ is the extended natural numbers.

The definition (\ref{neighborhood})  could be changed 
according to learning models. 
For more methods, refer to \cite[Geirhos, Janssen, Schutt, Rauber, Bethge, Wichmann]{geirhos-janssen-schutt-rauber-bethge-wichmann}.  

\smallskip

\section{Application to a standing wave} \label{application to a standing wave}

A standing wave (or a stationary wave)  appears on the surface of a liquid in a vibrating container or on  vibrating strings,
is oscillation in time but whose peak amplitude outline does not move in space.
The locations at which the amplitude is minimum are defined as  nodes, and the locations where the amplitude is maximum are defined as antinodes.

For  the realization $x$ of the amplitude random variable $X$,
 the differential equation of the normalized wave is represented by
	$\ddot{x} + x = 0$,
where $-1\le x\le 1$.
Putting $x_1 = x$ and $x_2 = \dot{x}$, we have the dynamical system of
\begin{equation} \label{governing equation}
	\frac{d}{dt} 
	\left(\begin{matrix}
	x_1 \\
	x_2
	\end{matrix}
	\right)
	=
	\left(
	\begin{matrix}
	0 & 1\\
	-1 & 0
	\end{matrix}
	\right)
	\left(
	\begin{matrix}
	x_1 \\
	x_2
	\end{matrix}
	\right)
\end{equation}
which contains the standing wave.

\paragraph{Frequency learning.}
Suppose that there are $32$ samples with time step $h_t = 0.2$ for $0\le t \le 2 \pi$  
in Figure \ref{amplitude sample of standing wave}.

\begin{figure}[H]
\vspace{-0.2cm}
\centering
\includegraphics[width=0.82\columnwidth,trim={0.5cm 0cm 0.5cm 0cm},clip]{./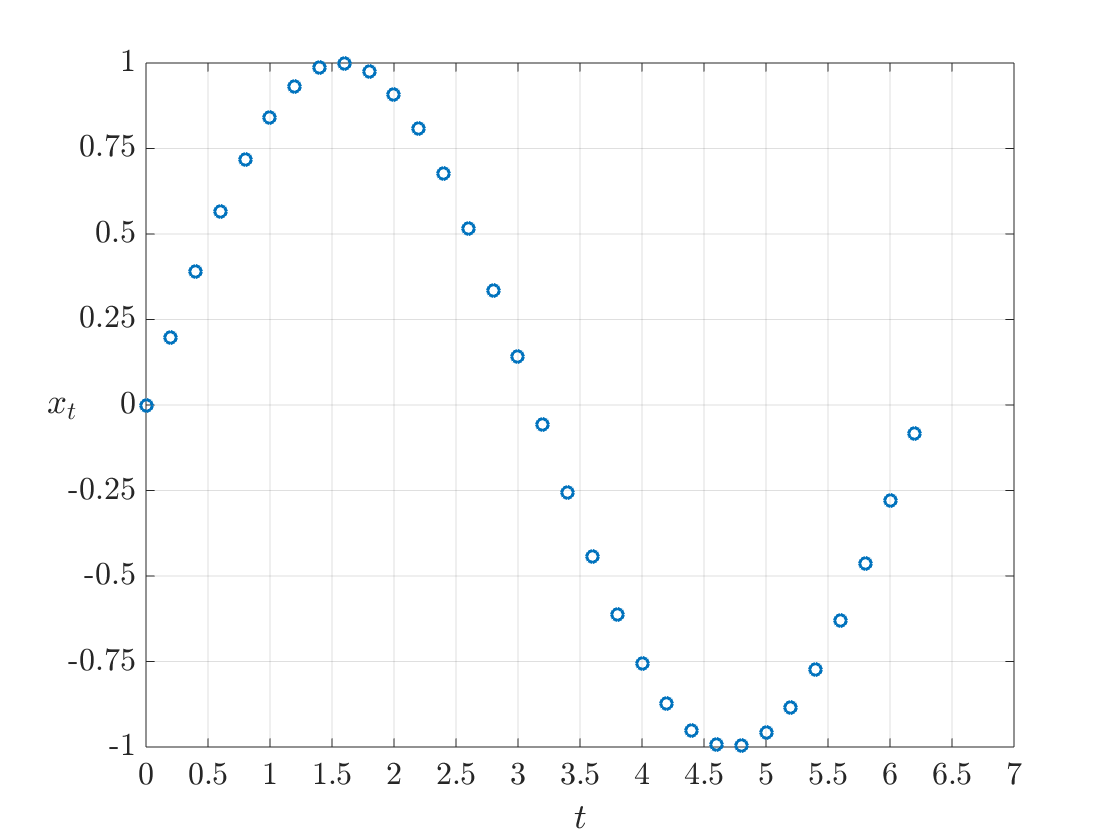}
\caption{Amplitude samples of size $32$.}
\vspace{-0.2cm}
\label{amplitude sample of standing wave}
\end{figure}

\begin{figure}[H]
\vspace{-0.5cm}
\centering
\includegraphics[width=0.82\columnwidth,trim={0.5cm 0cm 0.5cm 0cm},clip]{./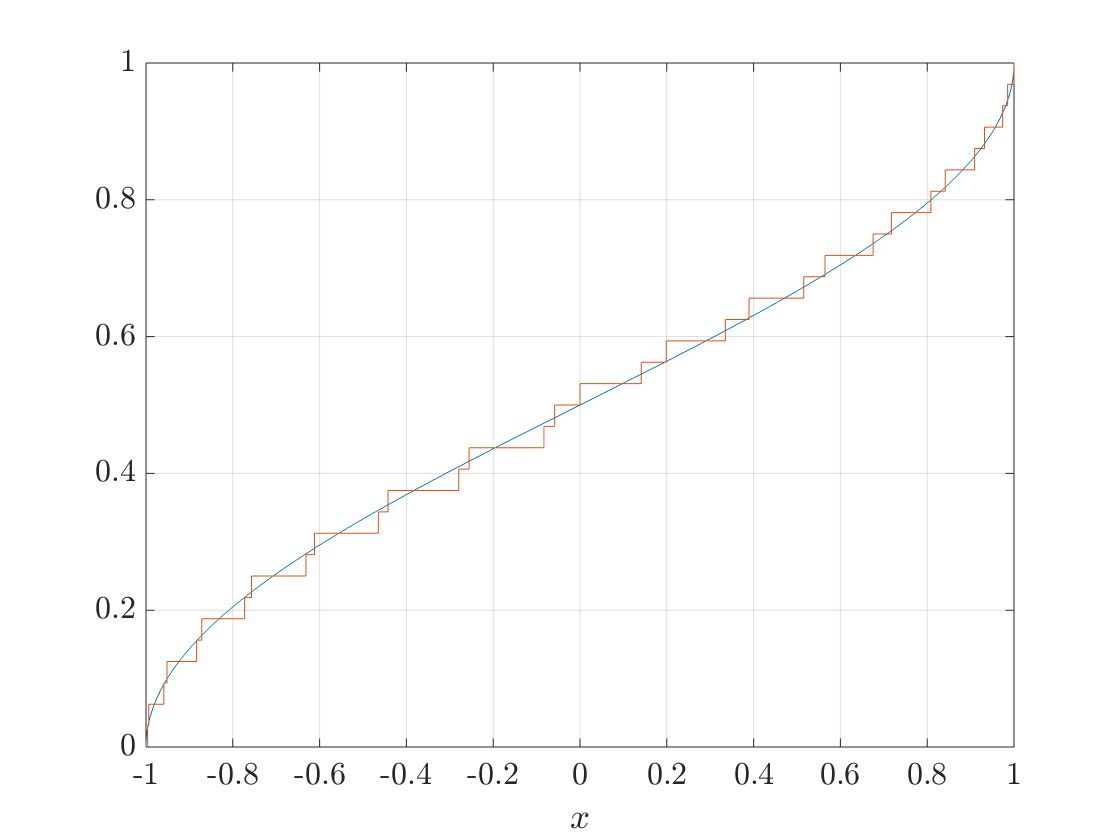}
\caption{The empirical distribution function $\hat{F}_m$ (red) and the limiting distribution $F$ (blue).}
\vspace{-0.2cm}
\label{amplitude empirical distributions}
\end{figure}

From snapshots of Figure \ref{amplitude sample of standing wave}, the KMD enables us to generate amplitude samples of size 6,284 ($h_t = 0.001$) as Figure \ref{generated sample2}. 
We regard $\hat F_{6,284}$ as the limiting distribution in Figure \ref{limiting distribution} by cubic spline interpolation.
With the assumption of differentiability of $p$ with respect to $x$, by taking partial derivatives, we approximate the governing probability $p_0$  as Figure \ref{f network probability}.

\begin{figure}[H]
\centering
\vspace{-0.0cm}
\includegraphics[width=0.82\columnwidth,trim={0.5cm 0cm 0.5cm 0cm},clip]{./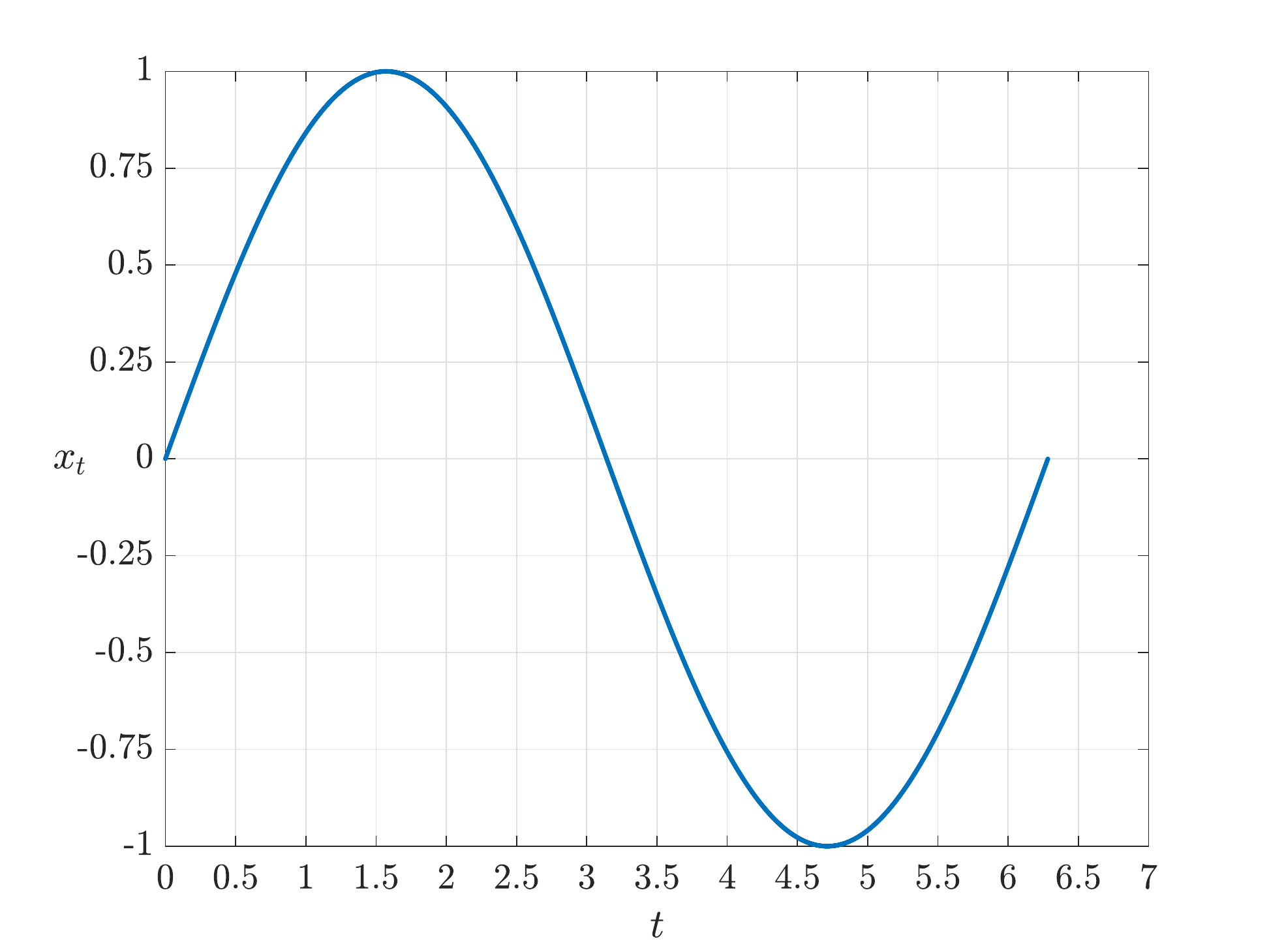}
\caption{Generated samples of size 6,284 with the KMD.}
\label{generated sample2}
\vspace{-0.2cm}
\end{figure}

\begin{figure}[H]
\centering
\vspace{-0.2cm}
\includegraphics[width=0.82\columnwidth,trim={0.5cm 0cm 0.5cm 0cm},clip]{./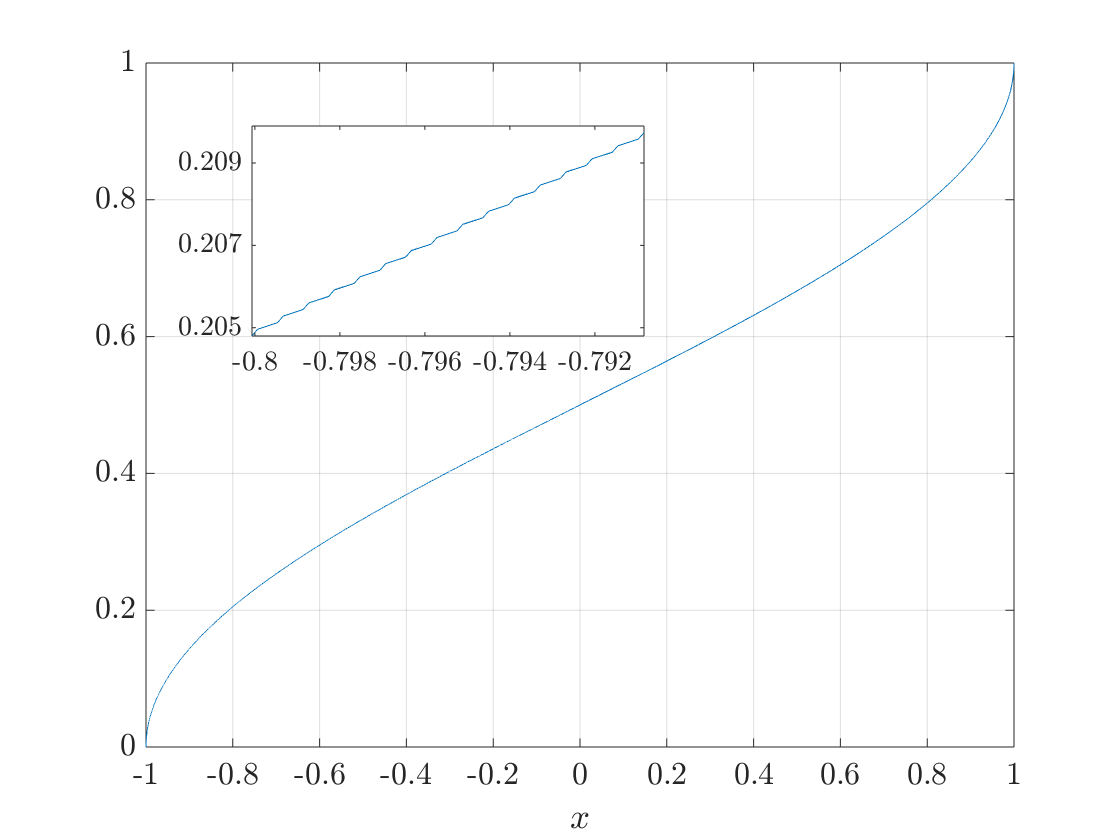}
\caption{The limiting distribution $F$.} 
\label{limiting distribution}
\vspace{-0.2cm}
\end{figure}

\begin{figure}[H]
\centering
\vspace{-0.2cm}
\includegraphics[width=0.82\columnwidth,trim={0.5cm 0cm 0.5cm 0cm},clip]{./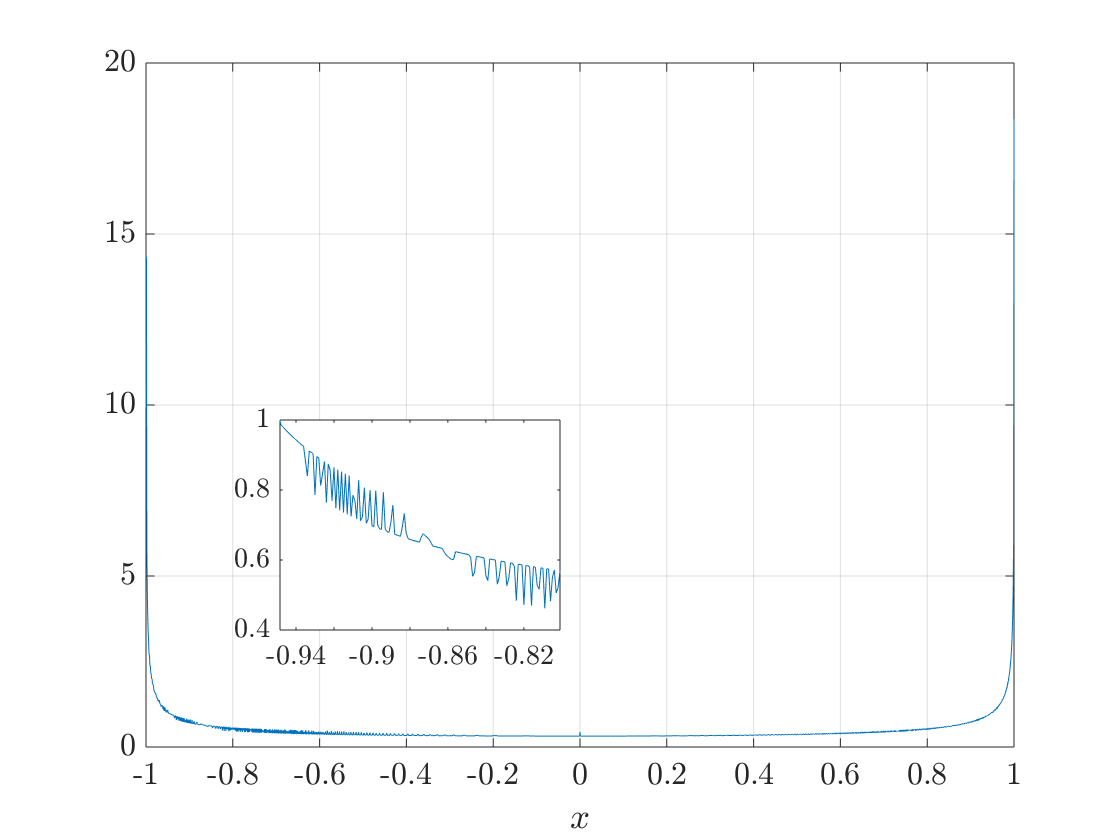}
\caption{The governing probability $p_0$.}
\label{f network probability}
\vspace{-0.2cm}
\end{figure}

\begin{figure}[H]
\centering
\vspace{-0.2cm}
\includegraphics[width=0.82\columnwidth,trim={1.25cm 0 1.25cm 1cm},clip]{./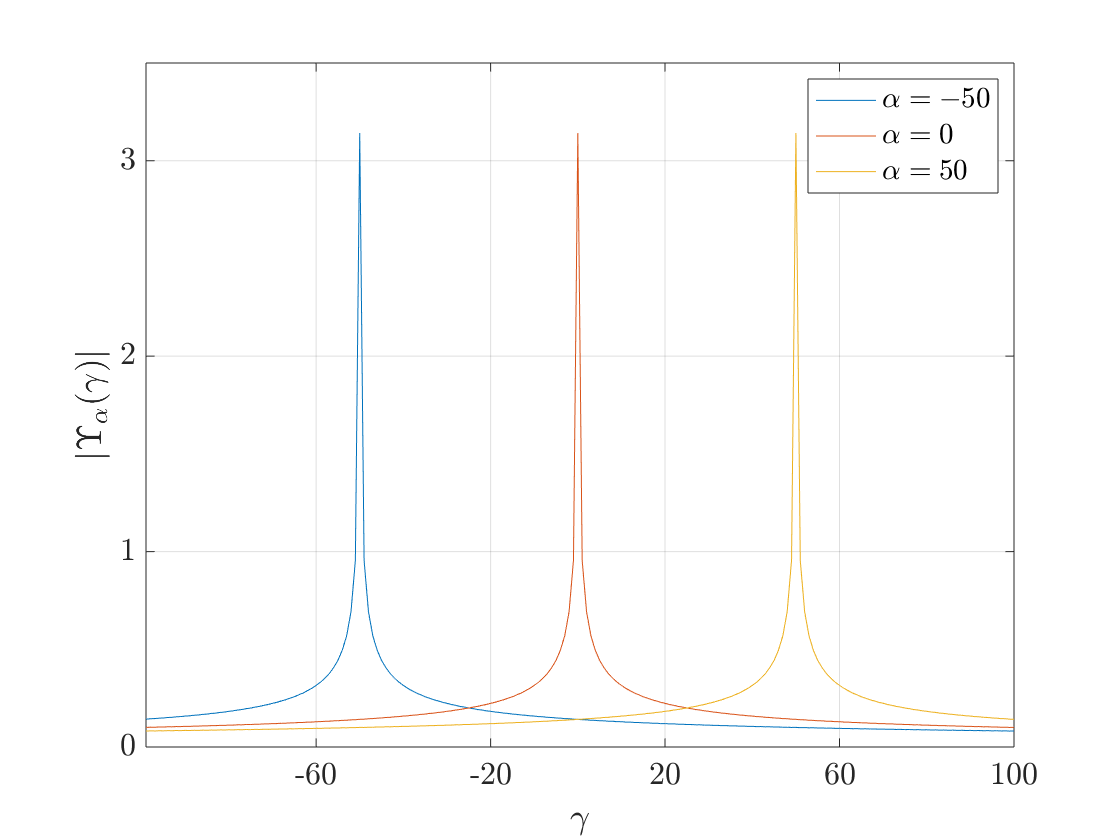}
\caption{$|\Upsilon_\alpha(\gamma)|$ for $\alpha=-50$, $0$, and $50$.}
\vspace{-0.2cm}
\label{frequency LR}
\end{figure}

According to (\ref{lr for frequency}),
every connection function of 
\begin{equation*}
	\Upsilon_\alpha(\gamma)  
		= \frac12\sum_{k=1}^{6,284}  \exp(-\pi i (\gamma-\alpha)x_k) \cdot 0.001 
\end{equation*}
is a translation of $\Upsilon_0$.
Figure \ref{frequency LR} shows three kinds of absolute values of $\Upsilon_\alpha$. By  (\ref{learning-rate1}),
the learning rate of frequency with the sample regenerated is as follows,
\begin{equation*}
	\frac1{\max_\alpha \mathrm{Var}(\Upsilon_\alpha)} 
	=\frac1{\mathrm{Var}(\Upsilon_0)} =   221.62. 
\end{equation*}

\smallskip

\noindent\textsc{Interpretation of estimations.}
Let $\mathpzc{D}$ be a dictionary of $|\alpha|\le100$. Red dots of Figures \ref{active neurons2} denote the active neurons for $x=0.75$, $1$, $1.5$, and $3$, respectively, 

Active neurons have specific patterns formed by bundles of neighboring neurons. 
In pictures  of Figure \ref{active neurons2} 
we draw neighborhood groups of red areas of rectangles with rounded corners only on positive index set because of symmetry.
As $x$ increases to $1$, the adjacent groups of active neurons are distributed between frequencies $10$ and $30$ in $(b)$ of Figure \ref{active neurons2}. On the other hand, If $x$ is far from $1$, the number of groups decreases, whereas the ball size increases. Moreover, if $x=3$, then there is no such groups between $10$ and $30$ in $(d)$ of Figure \ref{active neurons2}.

In addition, we examine topological statistics on $\Gamma_{\mathpzc{D}}$ 
for $x=0.75$, $1$, $1.5$, $3$, and $0$.
In the union of $k$-balls of each active path,
the number of balls, 
mean, and variance of indexes  
are compared in Figures \ref{frequency topology}, \ref{frequency means}, and \ref{frequency variances}, respectively.

Finally, from the network probability,
\begin{equation} \label{frequency network probability}
	\argmax_{x\in\mathbb{R}}p(x\mid y) = \argmax_{-1 \le x \le 1}p(x\mid y) = \pm 1,
\end{equation}
i.e., the likelihood has the maximum at the node and antinode.
Hence,  a random variable of amplitude for a standing wave has the maximum likelihood at peaks.
With all $y_\alpha$, (\ref{frequency network probability}) and $p_0(x)$ are identical. 
We can calculate likelihoods $p(x\mid \mathpzc{D})$ if the dictionary is main focus of interest.

\paragraph{Moment learning.}
We derive the network probability for $(x,\dot{x})$ as a moment learning and analyze estimations of the network through the topological interpretation.  
Suppose that there are $32$ samples of $(x,\dot{x})$ drawn from (\ref{governing equation})  with time step $h_t = 0.2$ for $0\le t \le 2 \pi$ in Figure \ref{sample of standing wave}. 
By (\ref{multi-empirical}), the empirical distribution function $\hat{F}_{32}(x,\dot{x})$ is shown in Figure \ref{empirical cdf for moment}.
 
Using the KMD with $h_t = 0.001$, we gain 6,284 samples (Figure \ref{m generated sample1}) with which
we draw 
the limiting distribution $F(x,\dot{x})$ and governing probability $p_0(x,\dot{x})$
in Figures \ref{limiting distribution for moment} and \ref{network probability for moment}, respectively.
Here, $p_0$ is calculated by partial derivatives of a finite-difference method at centers of $300\times300$ equally spaced bins.

The sample  has a uniform time step $h_t= 0.001$.
By (\ref{learning-rate2})
the learning rate for moment is given by
\begin{equation*}
	\frac1{\max_{k} \|h_k\|} = \frac{1}{0.001}=\num[group-separator={,}]{1000}.
\end{equation*}

According to topological statistics of Figures \ref{frequency topology} $\sim$ \ref{frequency variances}, 
mean- and variation-distributions have unstable behaviors on small balls when $x=1$. 
Note that $x=1$ is the magnitude of an antinode.
A loss of stem in figures is caused by 
absence of $k$-ball in $\Gamma_{\mathpzc{D}}$, and so, there does not exist any topological statistic at the position.

\begin{figure}[H]
   \centering
\begin{tikzpicture}[scale=0.533]
\node[inner sep=0pt] (active) at (-0.44,0)
   {\includegraphics[width=0.85\columnwidth,trim={0cm 0.25cm 1cm 0.25cm},clip]{./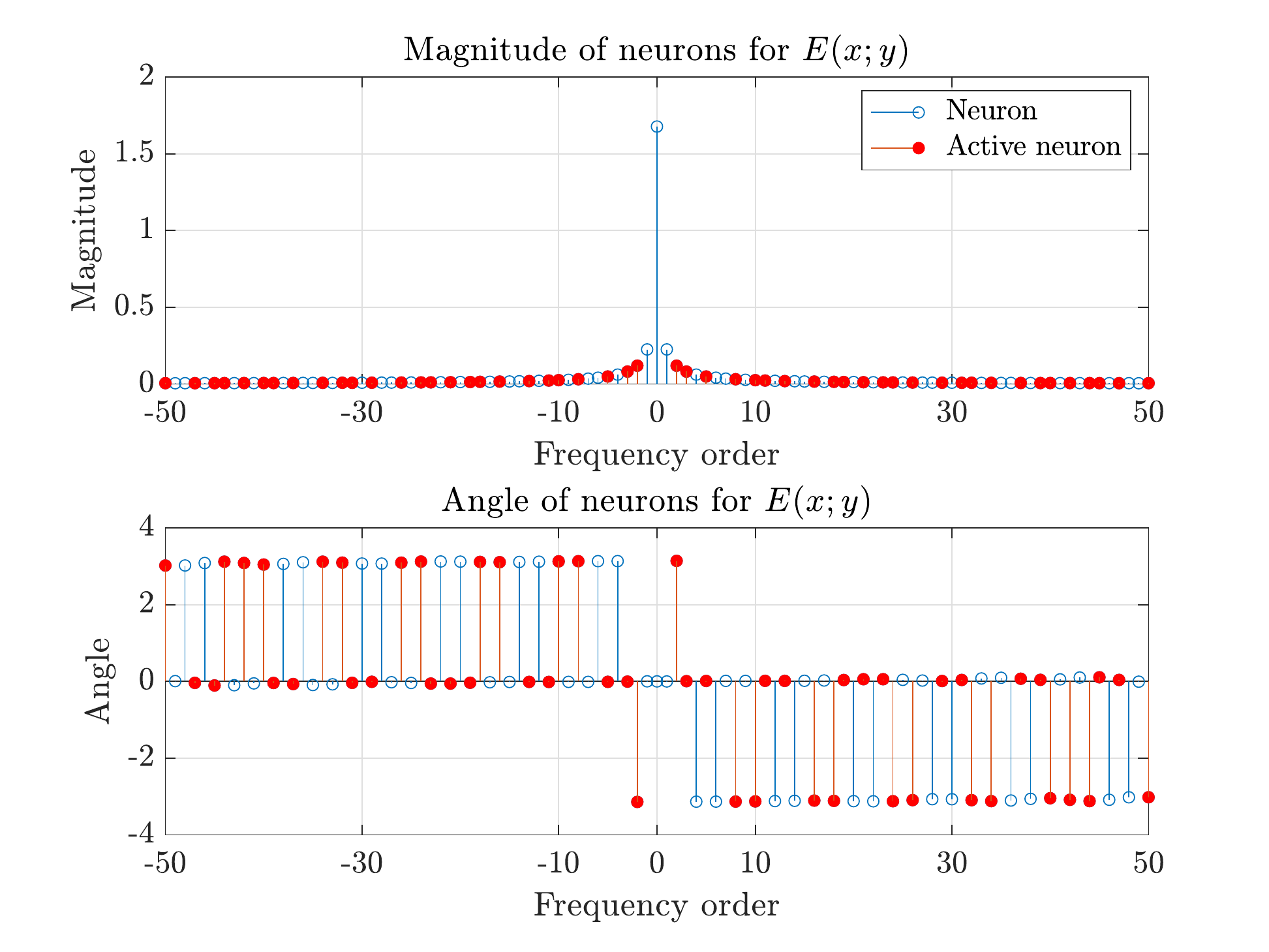}};
   \fill[fill=red!20, rounded corners=0.5mm, fill opacity=0.4] (0.27, -3.9) rectangle (0.59,2.5) {};   
   \fill[fill=red!20, rounded corners=0.5mm, fill opacity=0.4] (1.14, -3.9) rectangle (1.45,2.5) {};   
   \fill[fill=red!20, rounded corners=0.5mm, fill opacity=0.4] (2.0, -3.9) rectangle (2.32,2.5) {};   
   \fill[fill=red!20, rounded corners=0.5mm, fill opacity=0.4] (2.54, -3.9) rectangle (2.86,2.5) {};   
   \fill[fill=red!20, rounded corners=0.5mm, fill opacity=0.4] (3.41, -3.9) rectangle (3.72,2.5) {};   
   \fill[fill=red!20, rounded corners=0.5mm, fill opacity=0.4] (4.27, -3.9) rectangle (4.58,2.5) {};   
   \fill[fill=red!20, rounded corners=0.5mm, fill opacity=0.4] (4.81, -3.9) rectangle (5.125,2.5) {};   
	\node [text centered, text width=3.4cm] at (0,-5.2)   {\tiny $(a)$};
\end{tikzpicture}

\begin{tikzpicture}[scale=0.533]
\node[inner sep=0pt] (active) at (-0.44,0)
   {\includegraphics[width=0.85\columnwidth,trim={0cm 0.25cm 1cm 0.25cm},clip]{./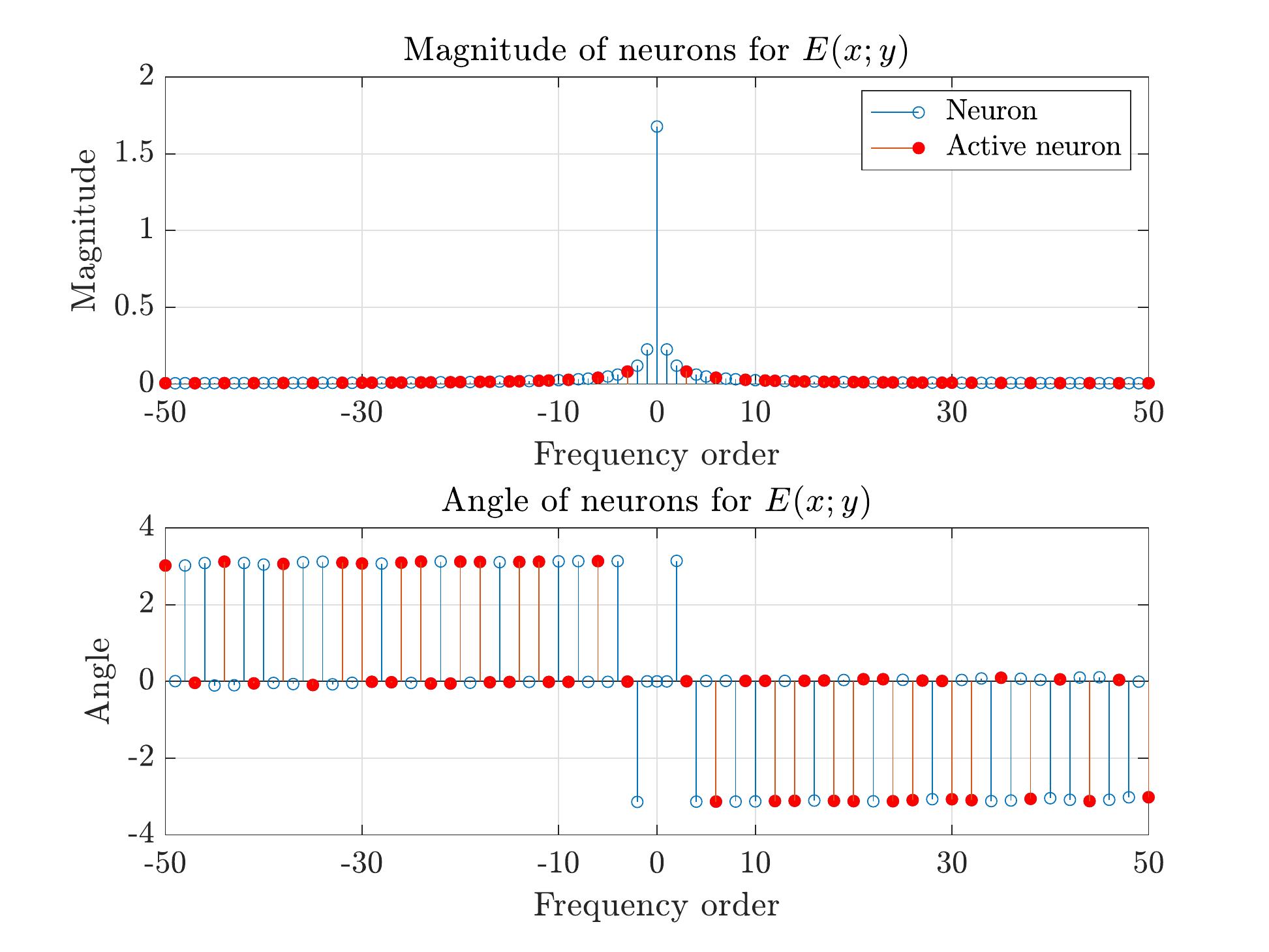}};
   \fill[fill=red!20, rounded corners=0.5mm, fill opacity=0.4] (1.25, -3.9) rectangle (3.5,2.5) {};   
	\node [text centered, text width=3.4cm] at (0,-5.2)   {\tiny $(b)$};
\end{tikzpicture}

\begin{tikzpicture}[scale=0.533]
\node[inner sep=0pt] (active) at (-0.44,0)
   {\includegraphics[width=0.85\columnwidth,trim={0cm 0.25cm 1cm 0.25cm},clip]{./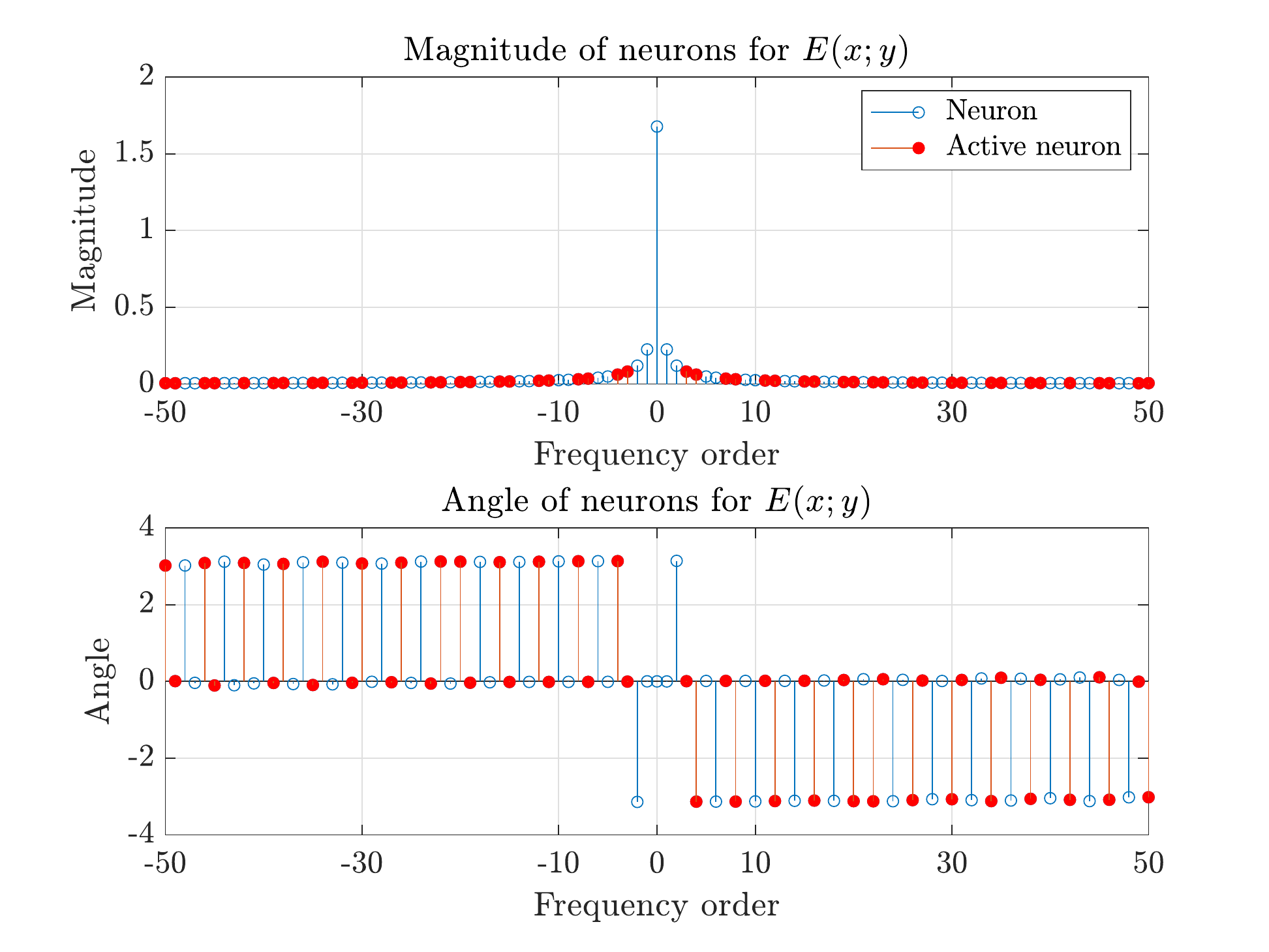}};
   \fill[fill=red!20, rounded corners=0.5mm, fill opacity=0.4] (0.38, -3.9) rectangle (0.7,2.5) {};   
   \fill[fill=red!20, rounded corners=0.5mm, fill opacity=0.4] (0.816, -3.9) rectangle (1.125,2.5) {};   
   \fill[fill=red!20, rounded corners=0.5mm, fill opacity=0.4] (1.245, -3.9) rectangle (1.56,2.5) {};   
   \draw[line width=0.2mm,red, rounded corners=0.5mm, fill opacity=0] (1.25, -3.9) rectangle (3.5,2.5) {};   
   \fill[fill=red!20, rounded corners=0.5mm, fill opacity=0.4] (1.68, -3.9) rectangle (1.99,2.5) {};   
   \fill[fill=red!20, rounded corners=0.5mm, fill opacity=0.4] (2.11, -3.9) rectangle (2.75,2.5) {};   
   \fill[fill=red!20, rounded corners=0.5mm, fill opacity=0.4] (2.868, -3.9) rectangle (3.183,2.5) {};   
   \fill[fill=red!20, rounded corners=0.5mm, fill opacity=0.4] (3.3, -3.9) rectangle (3.614,2.5) {};   
   \fill[fill=red!20, rounded corners=0.5mm, fill opacity=0.4] (3.73, -3.9) rectangle (4.045,2.5) {};   
   \fill[fill=red!20, rounded corners=0.5mm, fill opacity=0.4] (4.16, -3.9) rectangle (4.475,2.5) {};   
   \fill[fill=red!20, rounded corners=0.5mm, fill opacity=0.4] (4.92, -3.9) rectangle (5.233,2.5) {};   
   \fill[fill=red!20, rounded corners=0.5mm, fill opacity=0.4] (5.353, -3.9) rectangle (5.662,2.5) {};   
	\node [text centered, text width=3.4cm] at (0,-5.2)   {\tiny $(c)$};
\end{tikzpicture}

\begin{tikzpicture}[scale=0.533]
\node[inner sep=0pt] (active) at (-0.44,0)
   {\includegraphics[width=0.85\columnwidth,trim={0cm 0.25cm 1cm 0.25cm},clip]{./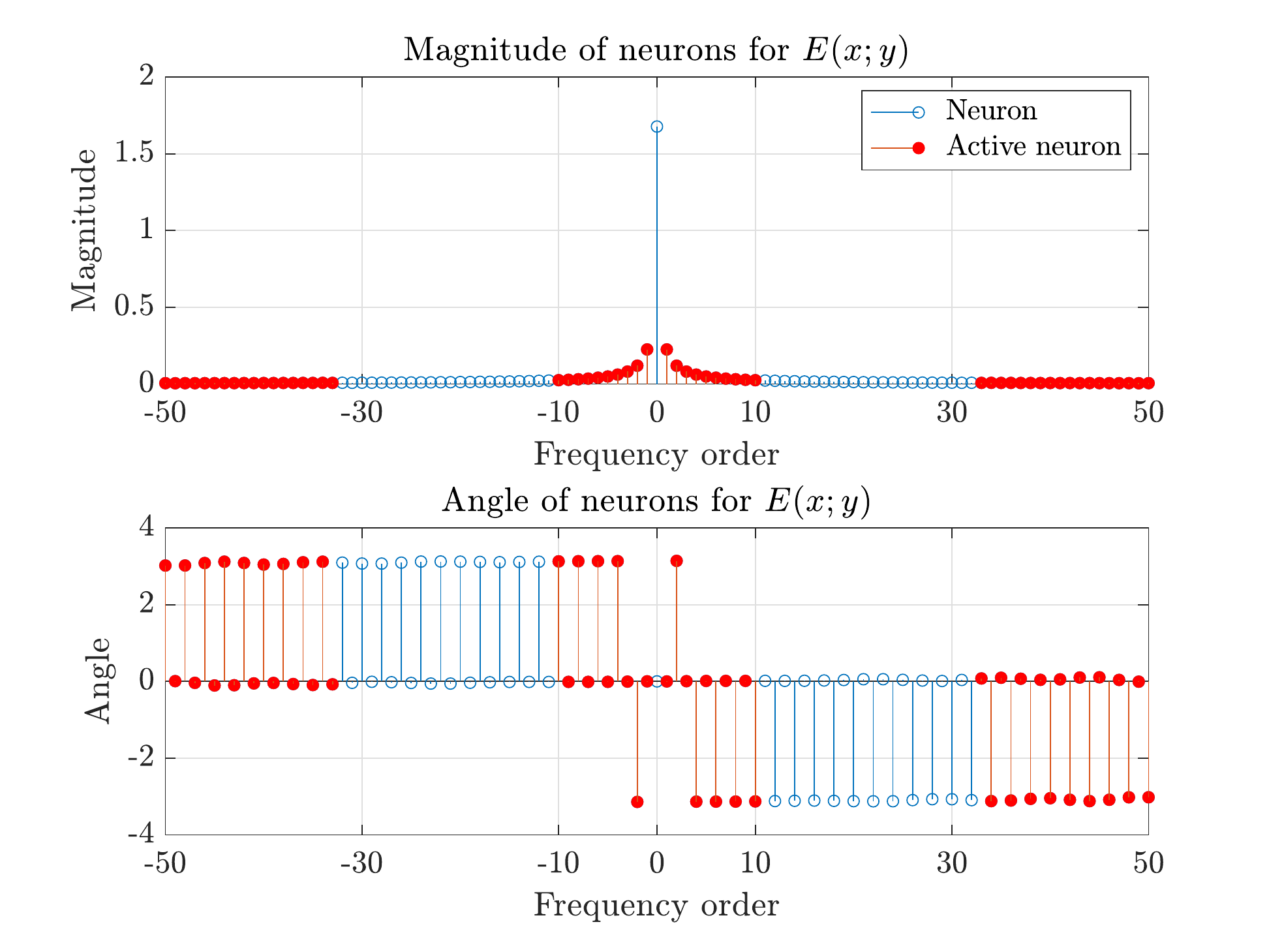}};
   \draw[line width=0.2mm,red, rounded corners=0.5mm, fill opacity=0] (1.25, -3.9) rectangle (3.5,2.5) {};   
      \fill[fill=red!20, rounded corners=0.5mm, fill opacity=0.4] (0.15, -3.9) rectangle (1.35,2.5) {};   
      \fill[fill=red!20, rounded corners=0.5mm, fill opacity=0.4] (3.62, -3.9) rectangle (5.662,2.5) {};   
	\node [text centered, text width=3.4cm] at (0,-5.2)   {\tiny $(d)$};
\end{tikzpicture}
   \captionof{figure}{$(a)$ $x=0.75$, $(b)$ $x=1$, $(c)$ $x=1.5$, $(d)$ $x=3$. Red rectangular areas with rounded corners denote groups of adjacent active neurons.  Red rounded rectangles of $(c)$ and $(d)$ are copies from $(b)$.}
   \label{active neurons2}
\end{figure}

On the other hand, 
the induced $p_0$  is almost  cylindrical and every partial derivative of $p_0$ vanishes at the origin. 
More precisely,  $p_0$  is an approximation of singular measure which is supported in  
the unit circle $S^1$  and 
\begin{equation*}
	\iint_{\mathbb{T}^2} p_0(x,\dot{x})\,dx\,d\dot{x} = 1.
\end{equation*}
Thus, $\ln 1/p_0$ does not belong to $A_0$  on $\mathbb{T}^2$. 
For this reason, the energy function cannot be expanded as a power series near the origin.

To avoid a dead end, we adopt an auxiliary function as a pullback limit of $p_0$. Since $p_0$ could be regarded as the limit of suitable 
analytic functions, take an approximation of  $p_0$, for example, 
\begin{equation} \label{aux}
	p_a(x,\dot{x}) = 
		\left\{
 		\begin{array}{cl}  
		C(1-x^2-\dot{x}^2)^{-\nicefrac{1}{2}}\quad & \mbox{if }\; x^2+\dot{x}^2\le 1 \\
		0\quad & \mbox{otherwise},
 		\end{array}
		\right.
\end{equation}
where $C$ is chosen such that $\|p_a\|_{L^1}=1$ (see Figure \ref{auxiliary network probability}).
(In fact, the bowl-shaped functions such as $p_a$ have a similar distribution of neurons 
to Figures \ref{active neurons for nonzero-zero} $\sim$ \ref{active neurons for nonzero-nonzero1}.)

Table \ref{three signals} presents some examples of likelihoods of $p_a(x,\dot{x}\mid y)$ and $p_0(x,\dot{x})$ with zero velocity. 
All express that $(1,0)$ has the maximum likelihood when $\dot{x}=0$.

\begin{table}[H]%
  \begin{center}
    \caption{Likelihoods.}
    \label{three signals}
    \begin{tabular}{c|c|c} 
      \toprule 
{\small $(x,\,0)$}  &   {\small $p_a(x,0\mid y)$}  &  {\small $p_0(x,0)$} \\ 
      \midrule 
{\small $0$} & {\small $0.1592$}  &  {\small $0$} \\ 
{\small $\nicefrac{1}{\sqrt{2}}$} & {\small $0.2251$}  & {\small $0$} \\
{\small $1$} & {\small $\infty$}  & {\small $23.9926$} \\
{\small $|x|>1$} & {\small $0$}  & {\small $0$} \\
      \bottomrule 
    \end{tabular}
  \end{center}
  \vspace{-0.3cm}
\end{table}

\begin{figure}[H]
\vspace{-0.3cm}    
\centering
    \includegraphics[width=0.82\columnwidth,trim={0.5cm 0cm 0cm 1.5cm},clip]{./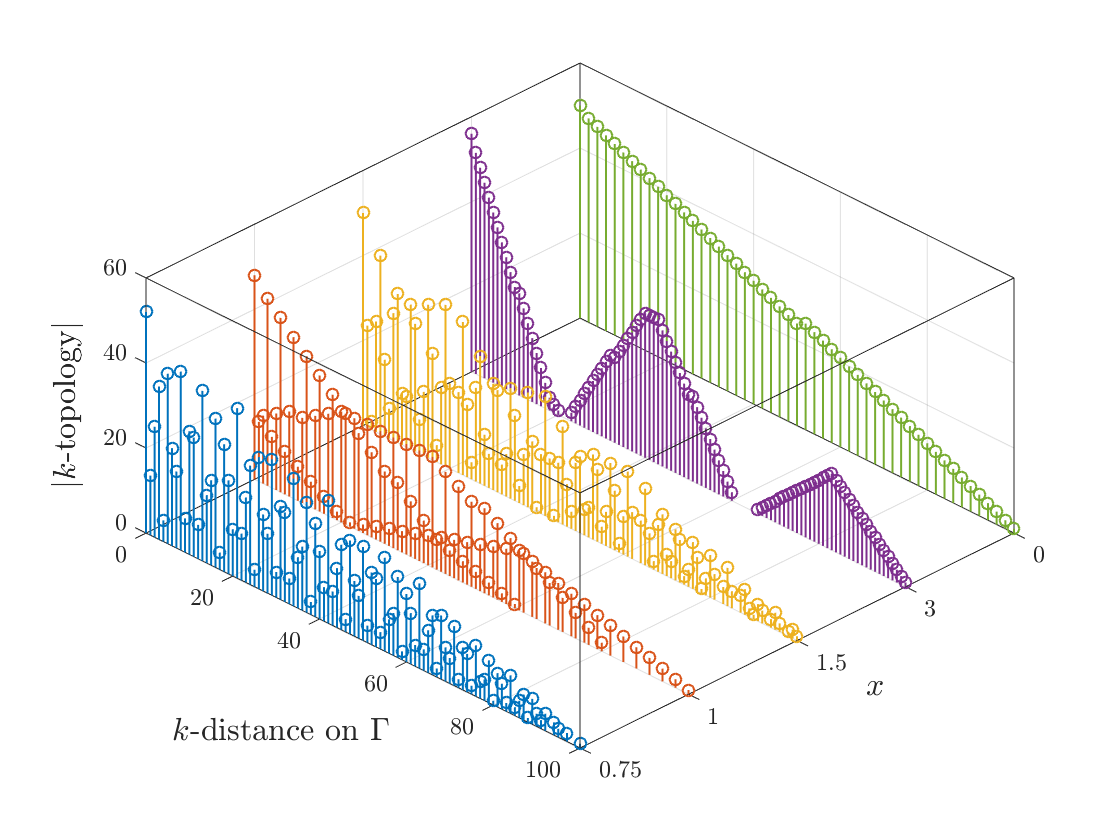}
\vspace{-0.5cm}    
\caption{The number of $k$-distance open balls on each active path for $x=0.75$, $1$, $1.5$, $3$, and $0$. Distribution of the number of $k$-topology for $x=3$, $0$ are more regular and simpler than others.}
\label{frequency topology}
\end{figure}

\begin{figure}[H]
\vspace{-0.3cm}    
\centering
    \includegraphics[width=0.82\columnwidth,trim={0.5cm 0cm 0cm 1.5cm},clip]{./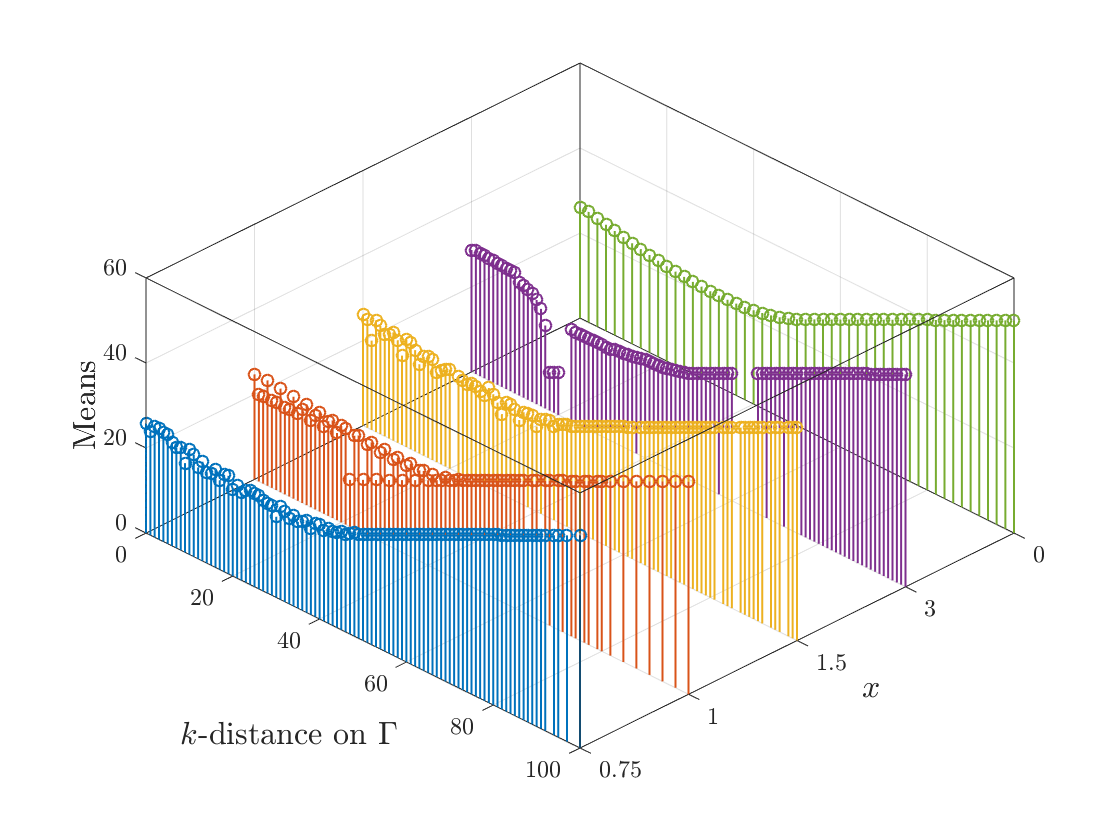}
\vspace{-0.5cm}    
\caption{The means of $k$-distance neurons on active paths for $x=0.75$, $1$, $1.5$, $3$, and $0$. 
The mean-distributions for small balls for $x=0.75$, $1$, $1.5$ are more unstable than others.}
\label{frequency means}
\end{figure}

\begin{figure}[H]
\centering
    \includegraphics[width=0.82\columnwidth,trim={0.5cm 0cm 0cm 1.5cm},clip]{./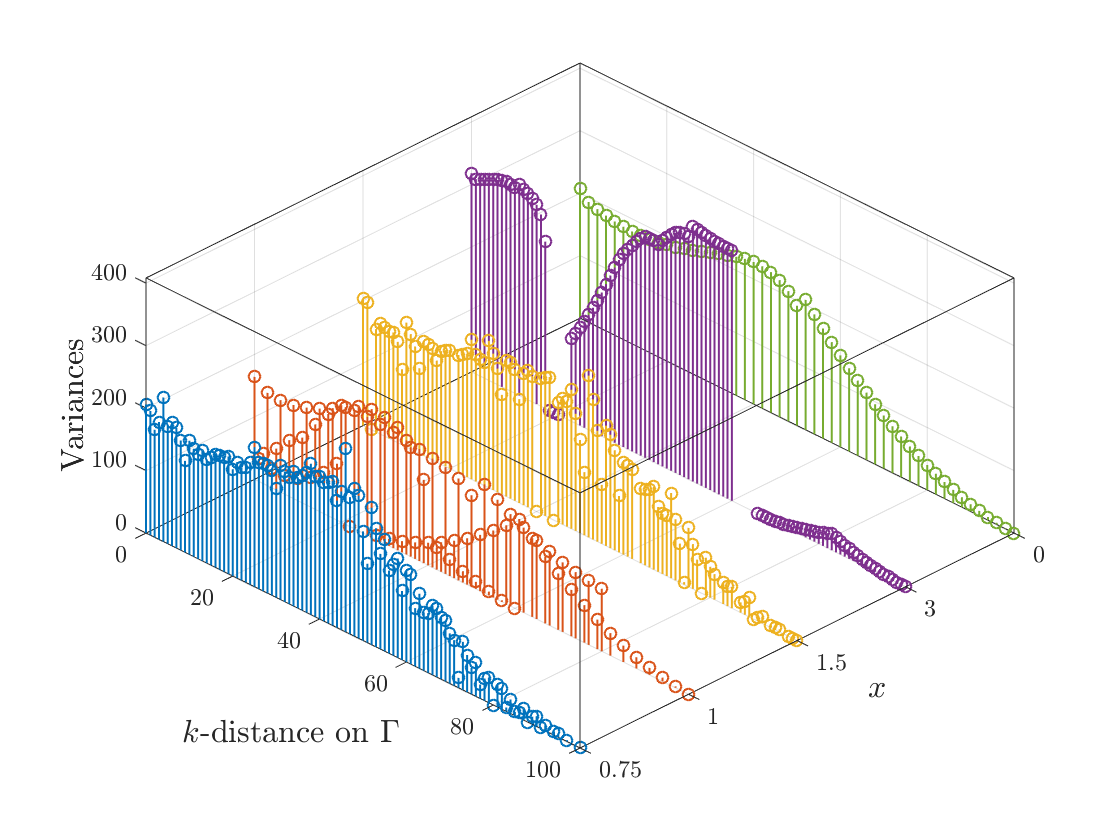}
\vspace{-0.5cm}    
\caption{The variances of $k$-distance neurons on active paths for $x=0.75$, $1$, $1.5$, $3$, $0$. 
The most unstable variance-distribution is of $x=1$.}
\label{frequency variances}
\end{figure}

\vspace{-0.2cm}

More precisely, 
\begin{equation*}
	\argmax_{x\in\mathbb{R}}p_a(x,0\mid y)
	=\argmax_{x\in\mathbb{R}}p_0(x,0) 
	=\pm1,
\end{equation*}
and thus, we conclude that  the likelihood of $(\pm1,0)$ has the maximum when $\dot{x}=0$. 
In general, for $-1\le \dot{x}\le 1$, we
obtain $x=\pm\sqrt{1-\dot{x}^2}$ which have the maximum likelihood of $p_0(x,\dot{x})$.
Therefore, $(x,\pm\sqrt{1-x^2})$  is most likely in the network (refer to Figures \ref{network probability for moment} and \ref{auxiliary network probability}).
In fact, the relation between $x$ and $\dot{x}$ is equal to $x^2 + \dot{x}^2 = 1$ for $0\le x\le 2 \pi$, e.g., $\dot{x}=\cos t$ when $x=\sin t$.

\begin{figure}[htp]
\vspace{-0.2cm}    
\centering
    \includegraphics[width=0.82\columnwidth,trim={0.5cm 0cm 1cm 0.75cm},clip]{./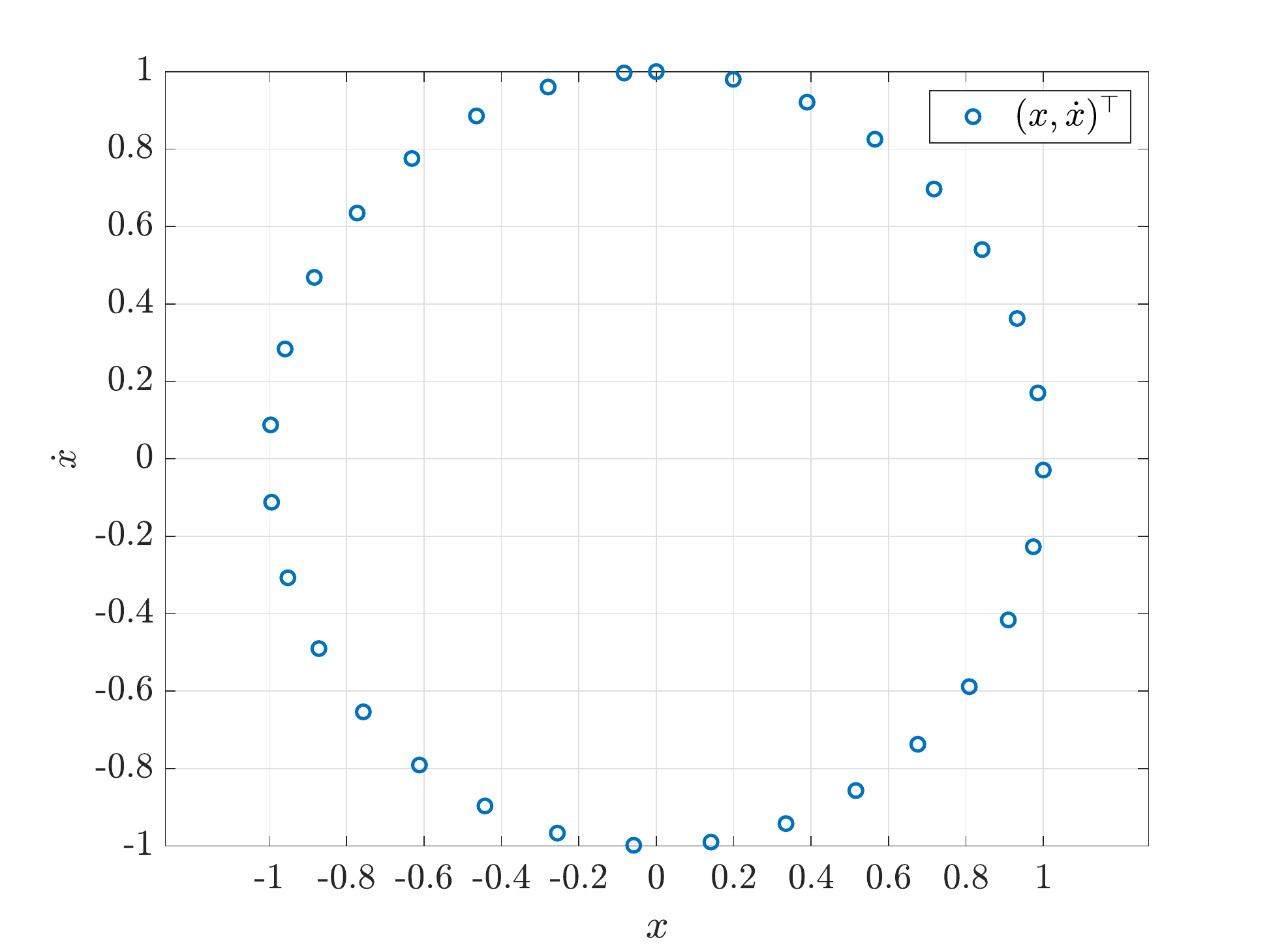}
\caption{Samples $(x,\dot{x})$ of size $32$.}
\label{sample of standing wave}
\end{figure}

\begin{figure}[htp]
\vspace{-0.3cm}    
\centering
    \includegraphics[width=0.82\columnwidth,trim={0.5cm 0cm 1cm 1.05cm},clip]{./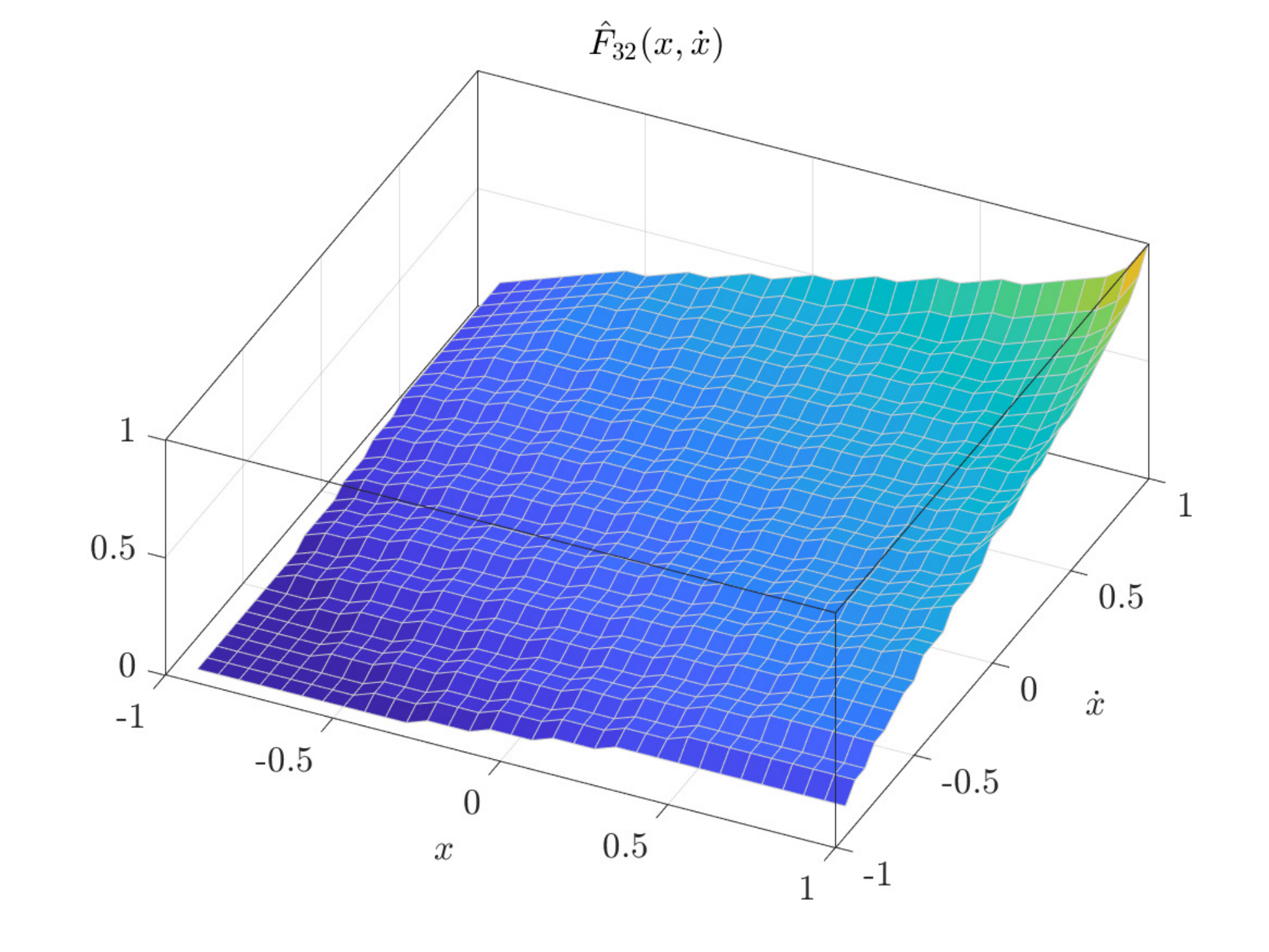}
\caption{The empirical distribution function $\hat{F}_{32}(x,\dot{x})$.}
\label{empirical cdf for moment}
\end{figure}

\begin{figure}[H]
\vspace{-0.0cm}    
\centering
    \includegraphics[width=0.82\columnwidth,trim={0.5cm 0cm 1cm 0.75cm},clip]{./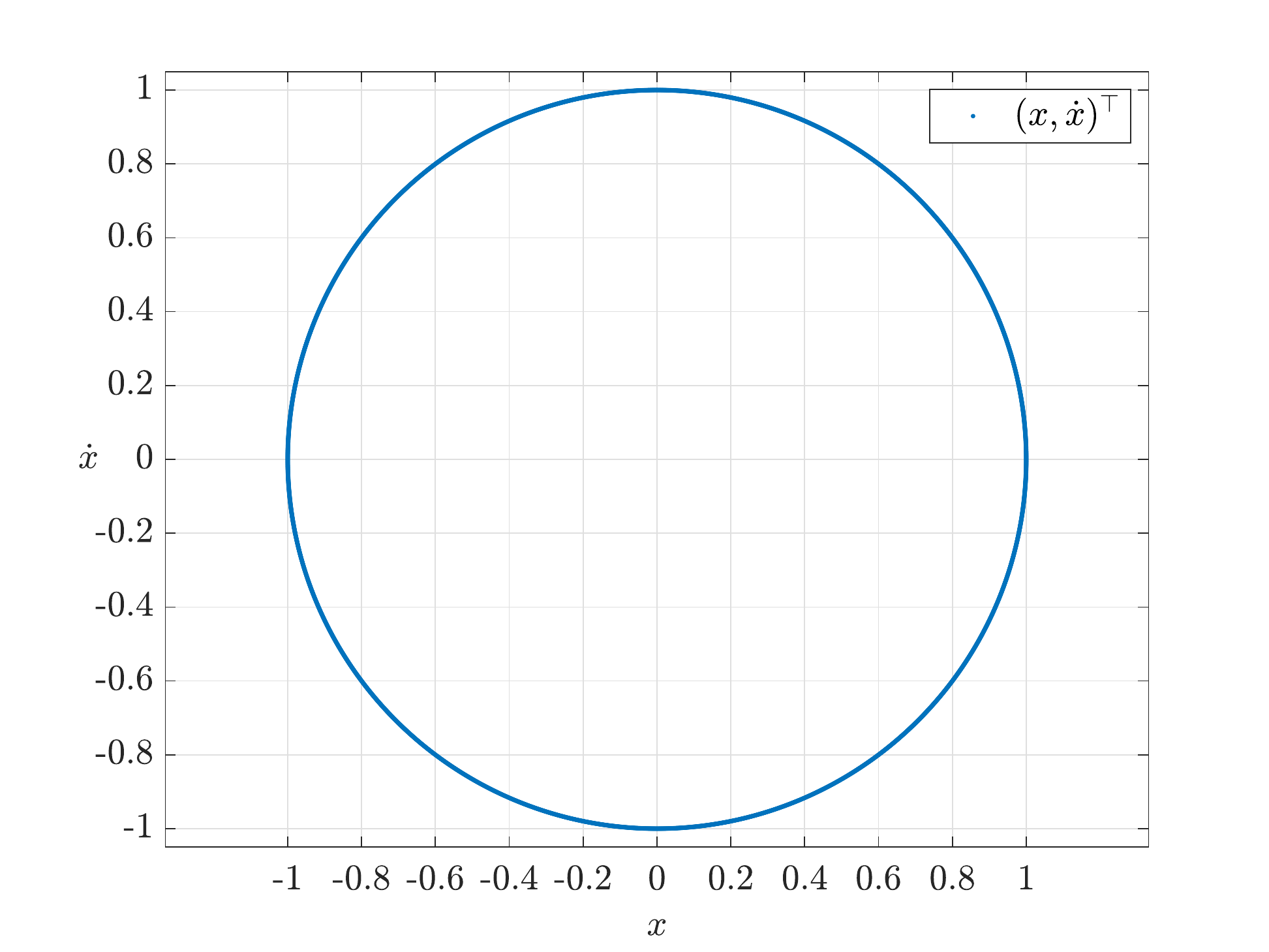}
\caption{Samples $(x,\dot{x})$ of size 6,284 with the KMD.}
   \vspace{-0.27cm}
\label{m generated sample1}
\end{figure}

\begin{figure}[H]
\vspace{-0.2cm}    
\centering
    \includegraphics[width=0.82\columnwidth,trim={0.5cm 0cm 1cm 0.5cm},clip]{./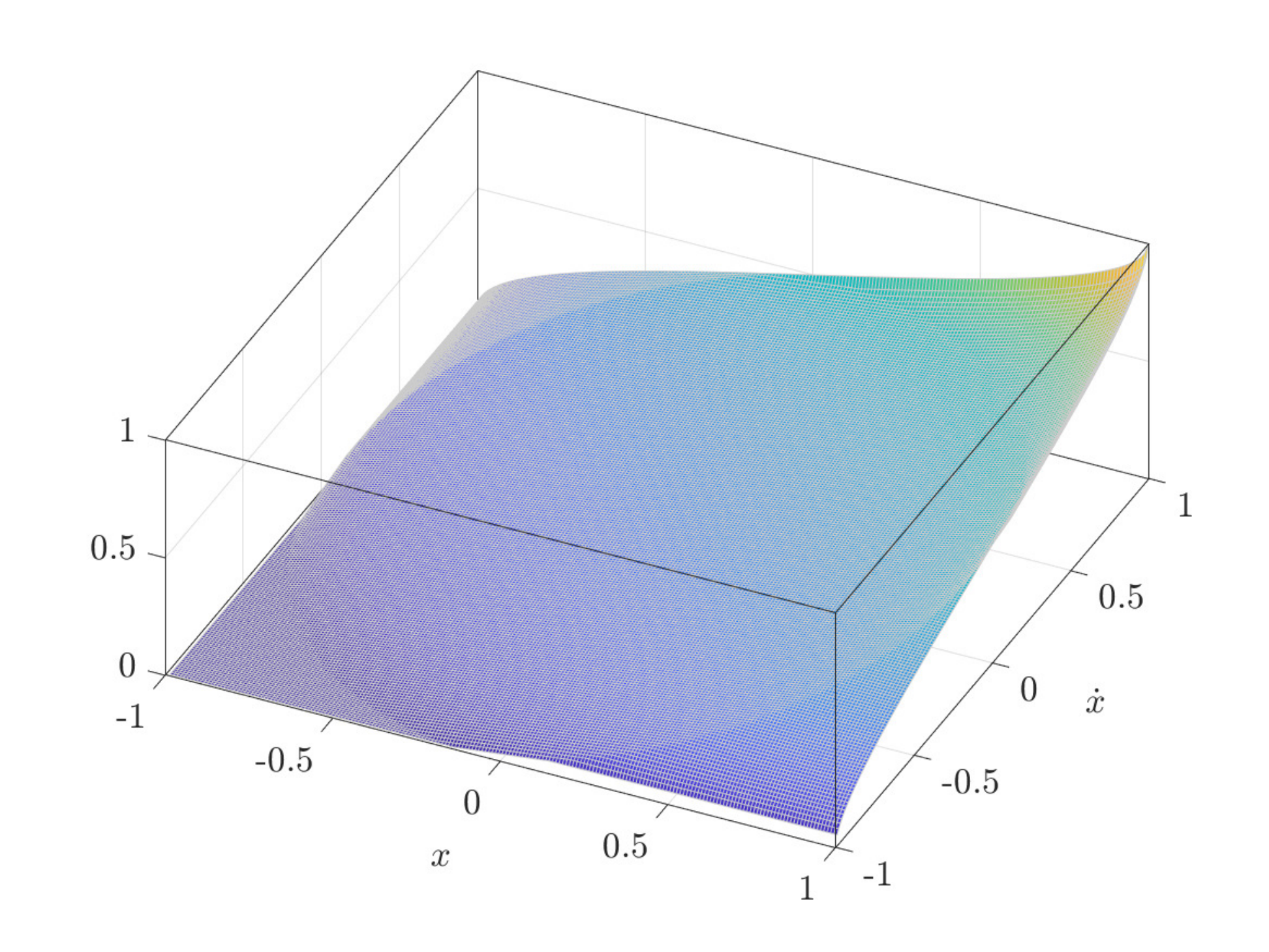}
\caption{The limiting distribution $F(x,\dot{x})$.}
   \vspace{-0.27cm}
\label{limiting distribution for moment}
\end{figure}

\begin{figure}[H]
\vspace{-0.2cm}    
\centering
    \includegraphics[width=0.82\columnwidth,trim={0.5cm 0cm 1cm 0.5cm},clip]{./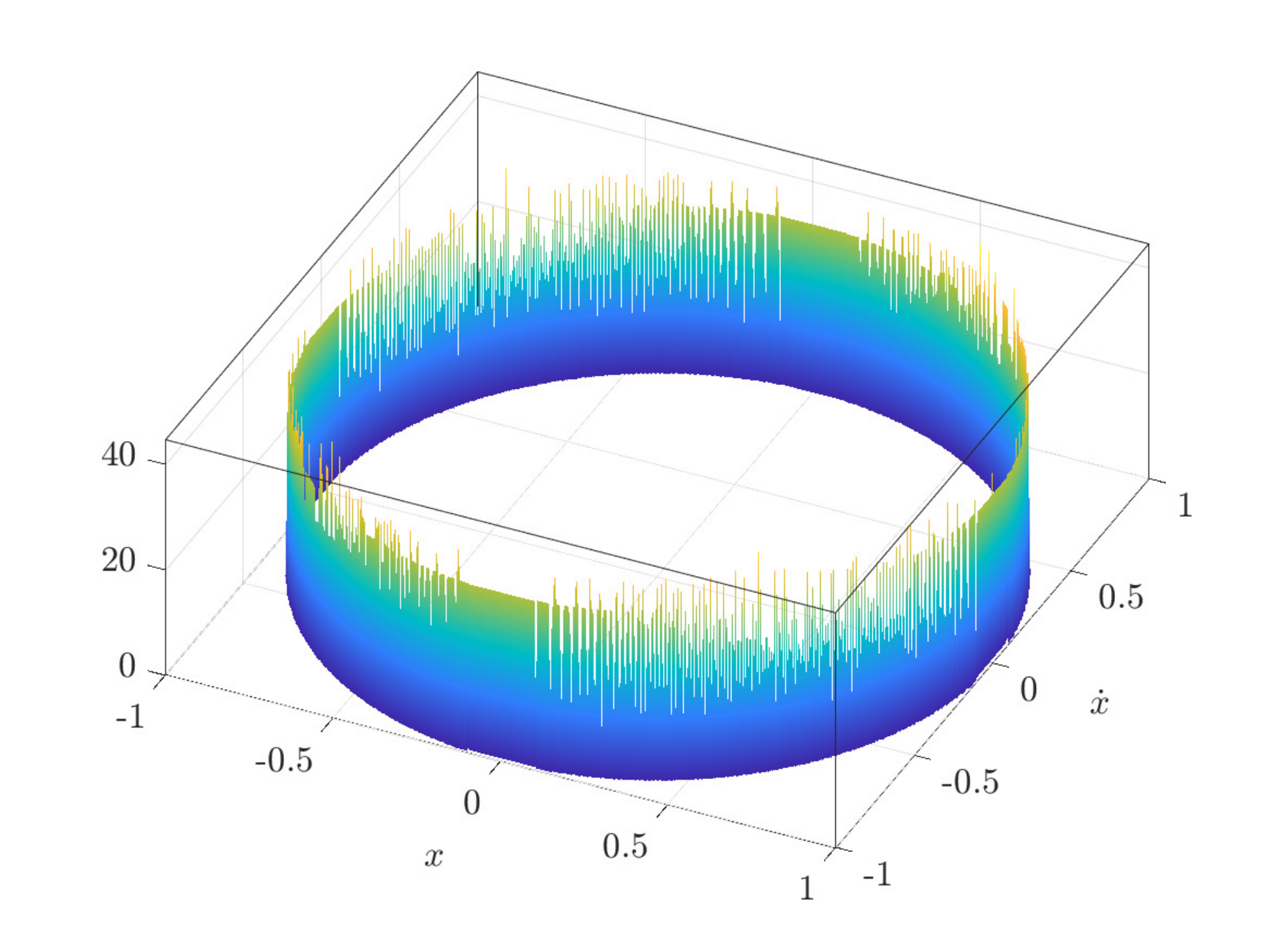}
\caption{The governing probability $p_0$.}
   \vspace{-0.25cm}
\label{network probability for moment}
\end{figure}

\begin{figure}[H]
\vspace{-0.2cm}    
\centering
    \includegraphics[width=0.82\columnwidth,trim={0.5cm 0cm 1cm 1.0cm},clip]{./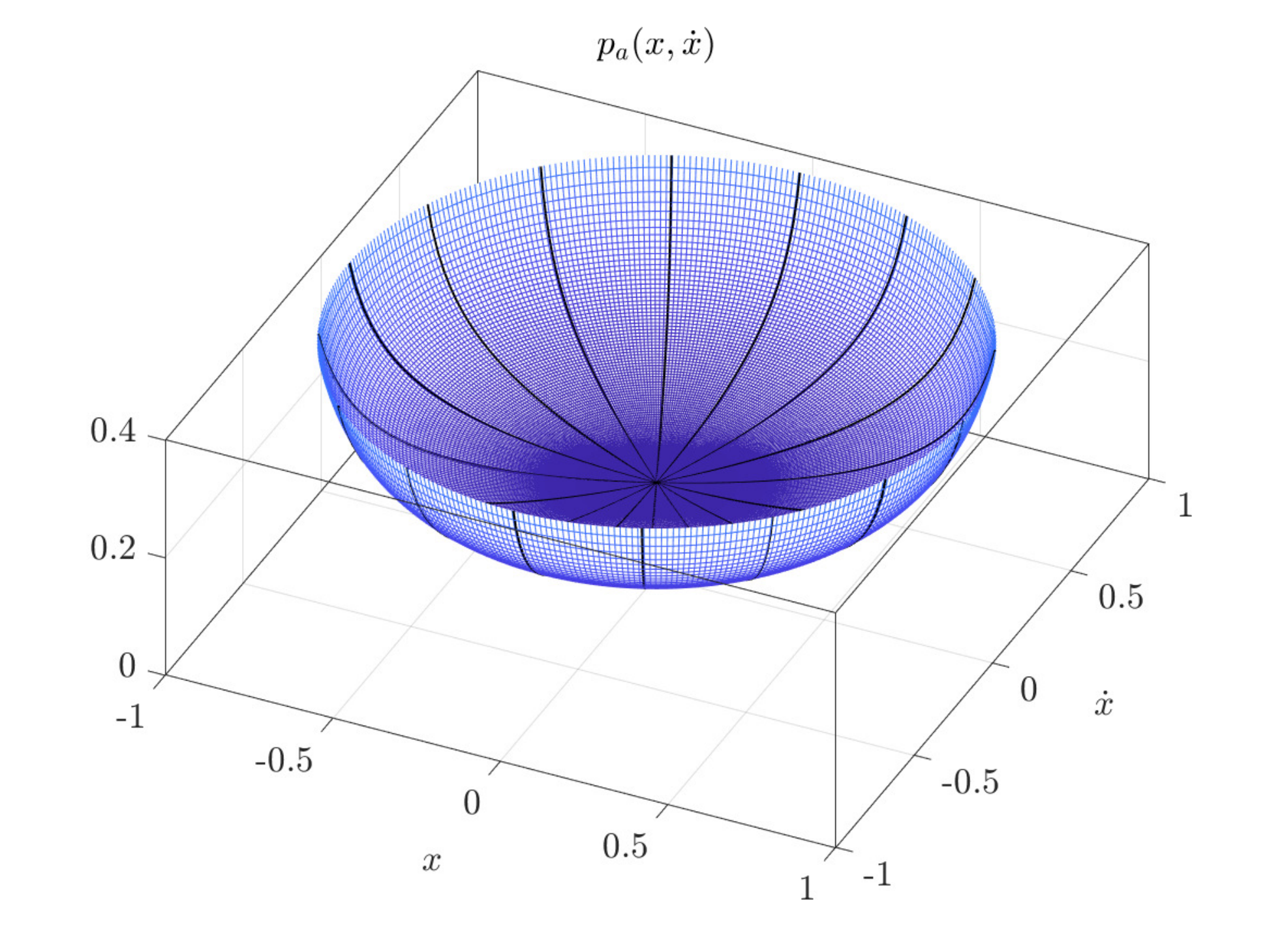}
\caption{Auxiliary governing probability $p_a(x,\dot{x})$.}
   \vspace{-0.35cm}
\label{auxiliary network probability}
\end{figure}

\smallskip
\noindent\textsc{Interpretation of estimations.}
Let $\mathpzc{D}$ be a dictionary  of indexes $\alpha$ such that $0\le \alpha_j\le20$ $(j=1,2)$.
The partial derivatives of $p_a$ at $(0,0)$ yield non-trivial neurons.
For estimation and interpretation, we consider $8$ signals of 
$(x,\dot{x})=(\pm1,0)$, $(0,\pm1)$, $(\pm1,\pm1)$, and $(\pm1,\mp1)$,
where double signs are in same order.

In Figures \ref{active neurons for nonzero-zero} and  \ref{active neurons for nonzero-nonzero1},
we draw neurons and active paths induced from $p_a$, in which red dots denote active paths for signals.
For simplicity, we adopt $\|\cdot\|_1$-norm to generate the topology, with which we
examine topological statistics on $\Gamma_{\mathpzc{D}}$. 
For each $k$, on the collection of all  $k$-balls,   
the number of balls, 
mean, and variance of indexes  
are compared in Figures \ref{moment topology}, \ref{moment means}, and \ref{moment variances}, respectively.

Note that all active paths do not contain any $k$-ball for $k\ge4$. 
In Figures \ref{moment means} and \ref{moment variances}, each position has values of two components.  
The right and left sides toward the distance increasing are 
the quantities from $\alpha_1$ and $\alpha_2$, respectively.

There are  plenty of mathematical research results for classification (\cite{bishop, goodfellow-bengio-courville,haykin, murphy}).  
Those could be useful tools for interpreting unsupervised estimates.

\begin{remark}
Without reproducing process of data, 
we could approximate the neurons of a probabilistic neural network. 
Indeed, the drawn sample would give the similar distribution of neurons to the probabilistic neural network.
In many cases we do not need thousands or more of data to calculate neurons. 
However, the accuracy of the derived governing probability may be poor and the learning rate is low, because the accuracy depends on the sample size and quality.    
\end{remark}

\begin{figure}[H]
\vspace{-0.5cm}
\centering
\subfloat[$x=1$, $\dot{x}=0$]{
\kern-0.77em
\includegraphics[width=0.82\columnwidth,trim={0.5cm 0cm 0.5cm 1cm},clip]{./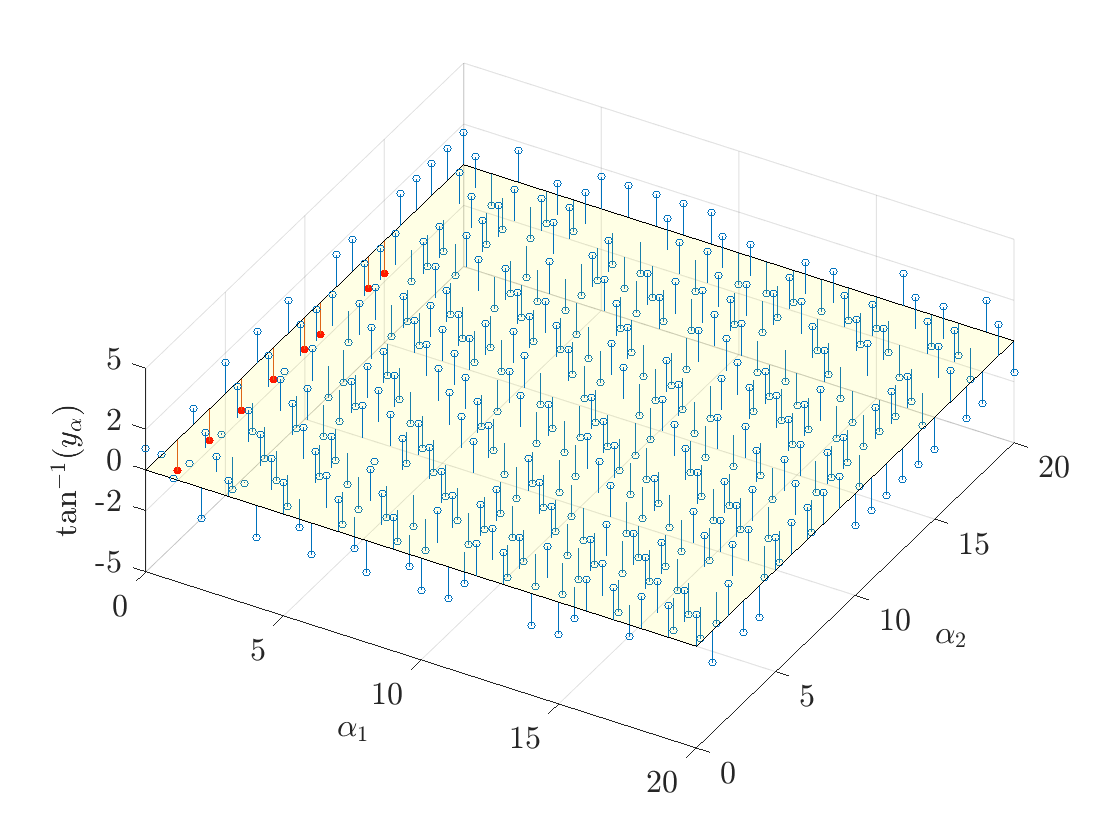}}
\vspace{-0.2cm}
\subfloat[$x=-1$, $\dot{x}=0$]{
\includegraphics[width=0.82\columnwidth,trim={0.5cm 0cm 0.5cm 1cm},clip]{./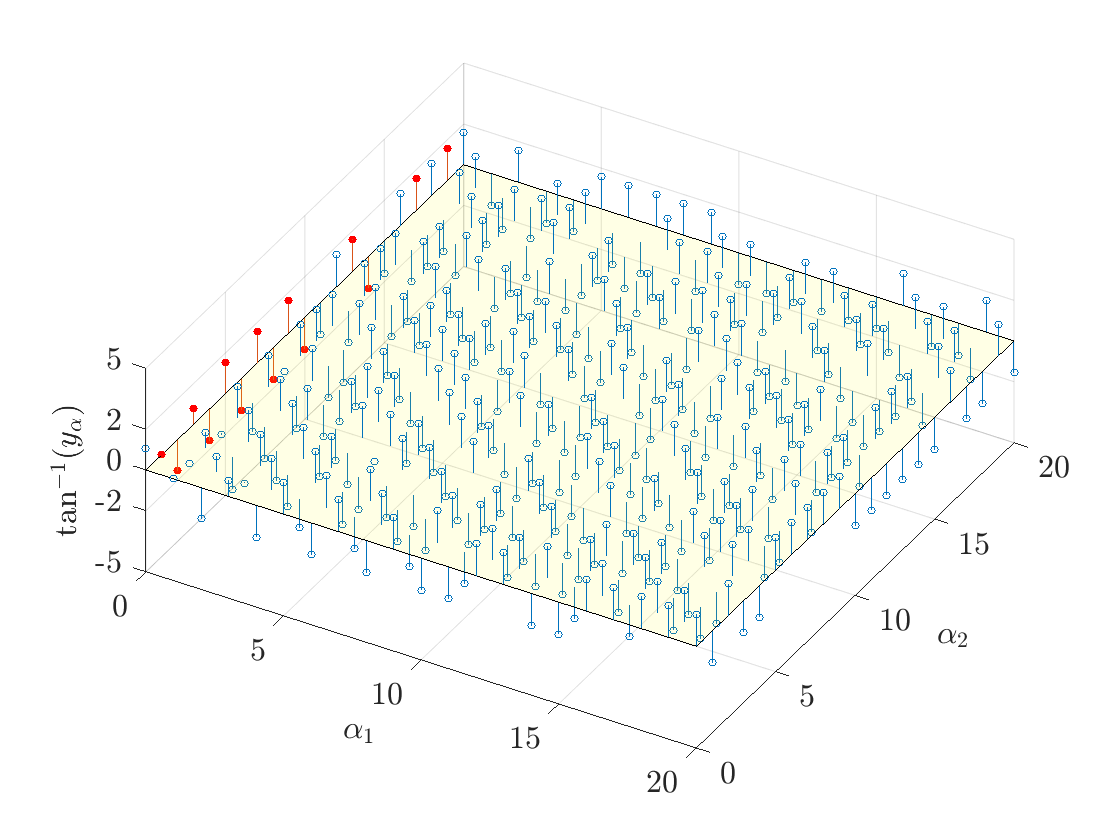}}
   \vspace{-0.2cm}
 \caption{Two active paths are parts of $\alpha_2$-axis. All active neurons of $(a)$ are negative but not $(b)$ and both topological statistics are very similar (Figures \ref{moment topology} $\sim$ \ref{moment variances}).}
\label{active neurons for nonzero-zero}
\end{figure}

\begin{figure}[H]
\vspace{-0.0cm}
\centering
\subfloat[$x=0$, $\dot{x}=1$]{
\kern-0.77em
\includegraphics[width=0.82\columnwidth,trim={0.5cm 0cm 0.5cm 1cm},clip]{./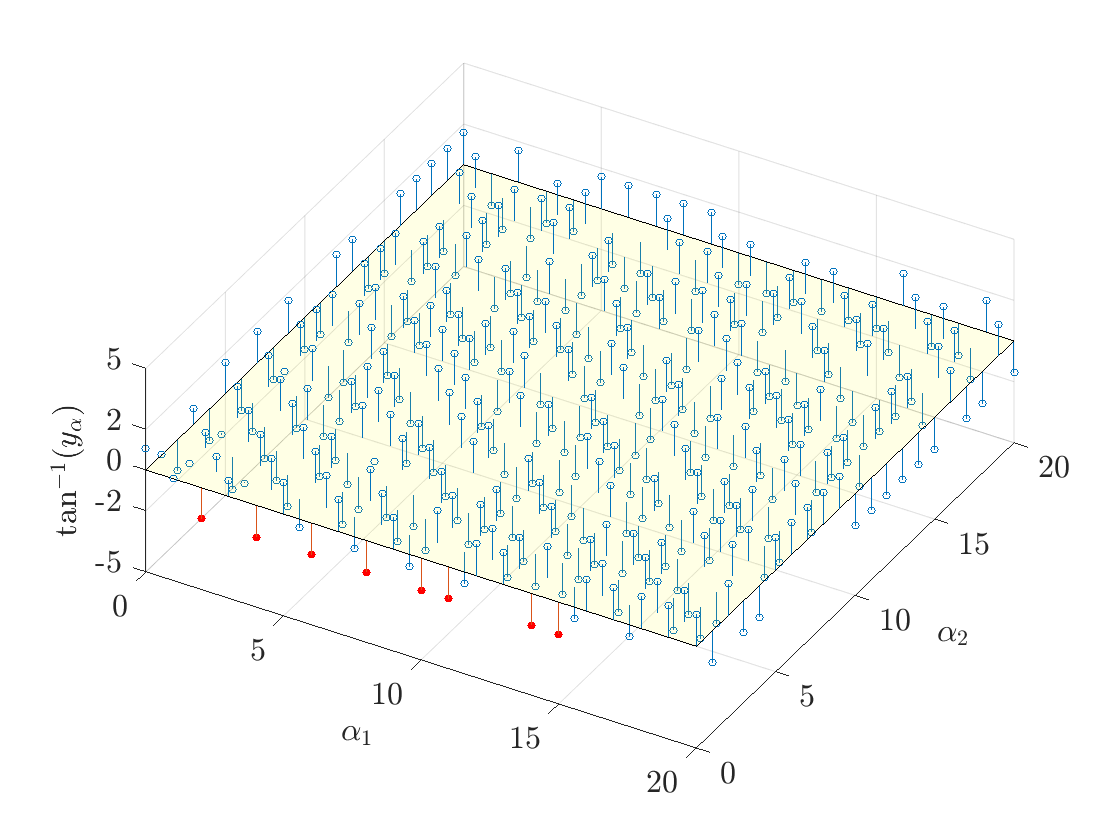}}

\vspace{-0.2cm}
\subfloat[$x=0$, $\dot{x}=-1$]{
\includegraphics[width=0.82\columnwidth,trim={0.5cm 0cm 0.5cm 1cm},clip]{./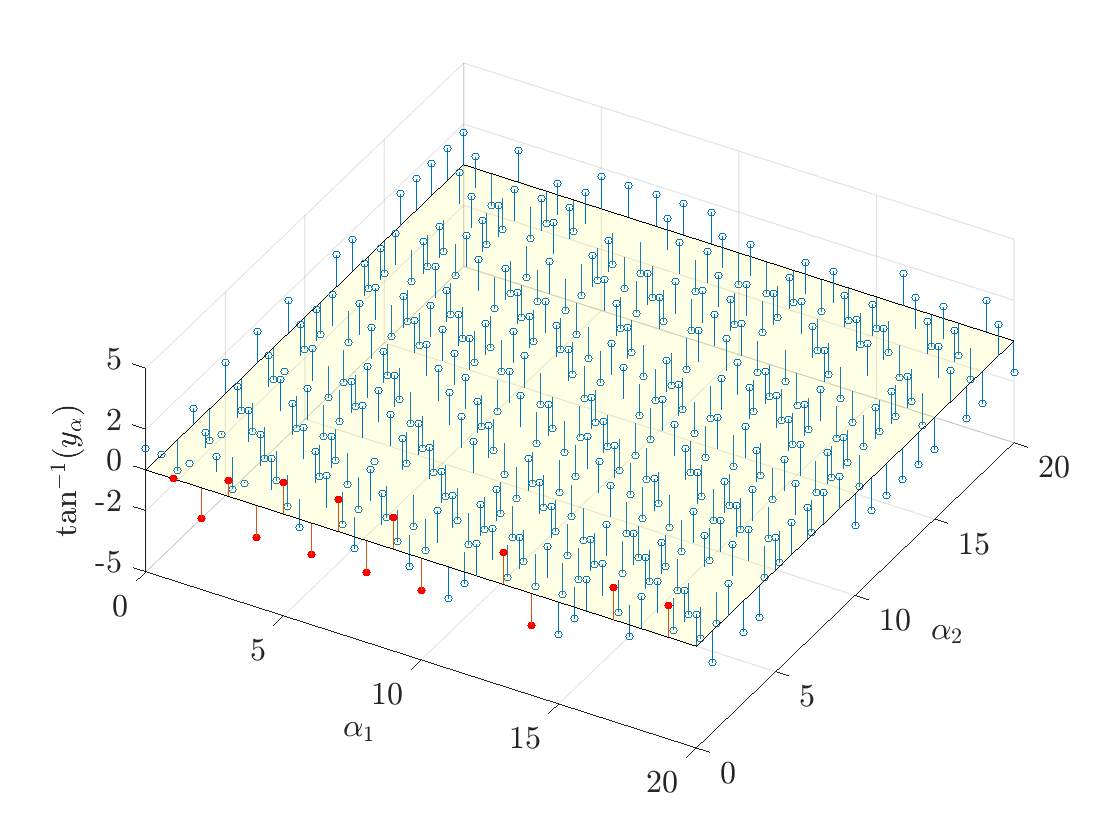}}
   \vspace{-0.2cm}
 \caption{Both have the symmetric transposition of axes compared to Figure \ref{active neurons for nonzero-zero}.}
\label{active neurons for zero-nonzero}
\end{figure}

\begin{figure}[H]
\vspace{-0.5cm}
\centering
\subfloat[$x=1$, $\dot{x}=1$]{
\kern-0.8em
\includegraphics[width=0.82\columnwidth,trim={0.5cm 0cm 0.5cm 1cm},clip]{./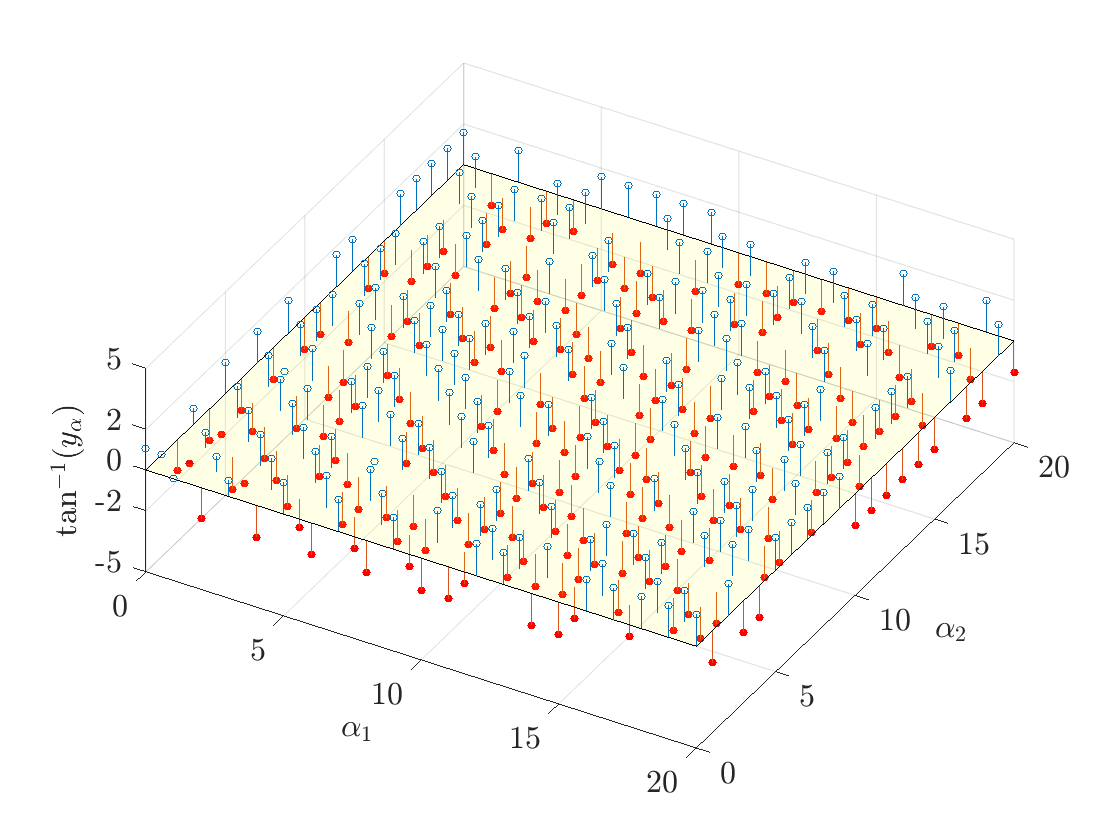}}

\vspace{-0.2cm}
\subfloat[$x=-1$, $\dot{x}=-1$]{
\includegraphics[width=0.82\columnwidth,trim={0.5cm 0cm 0.5cm 1cm},clip]{./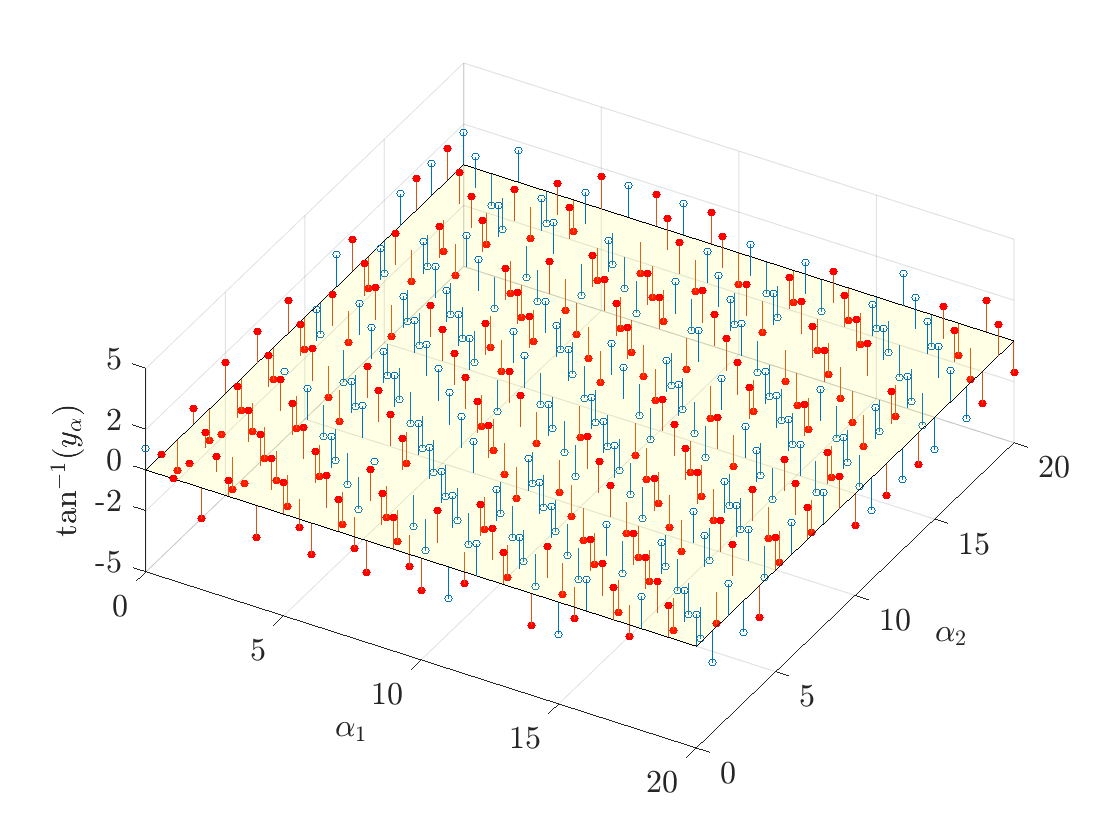}}
   \vspace{-0.2cm}
 \caption{Both have non-trivial active neurons in contrast to Figure \ref{active neurons for zero-nonzero}.  
 Active neurons of $(a)$  are negative but not $(b)$. Topological statistics are different (Figures \ref{moment topology} $\sim$ \ref{moment variances}).}
\label{active neurons for nonzero-nonzero}
\end{figure}

\begin{figure}[H]
\vspace{-0.0cm}
\centering
\subfloat[$x=1$, $\dot{x}=-1$]{
\kern-0.77em
\includegraphics[width=0.82\columnwidth,trim={0.5cm 0cm 0.5cm 1cm},clip]{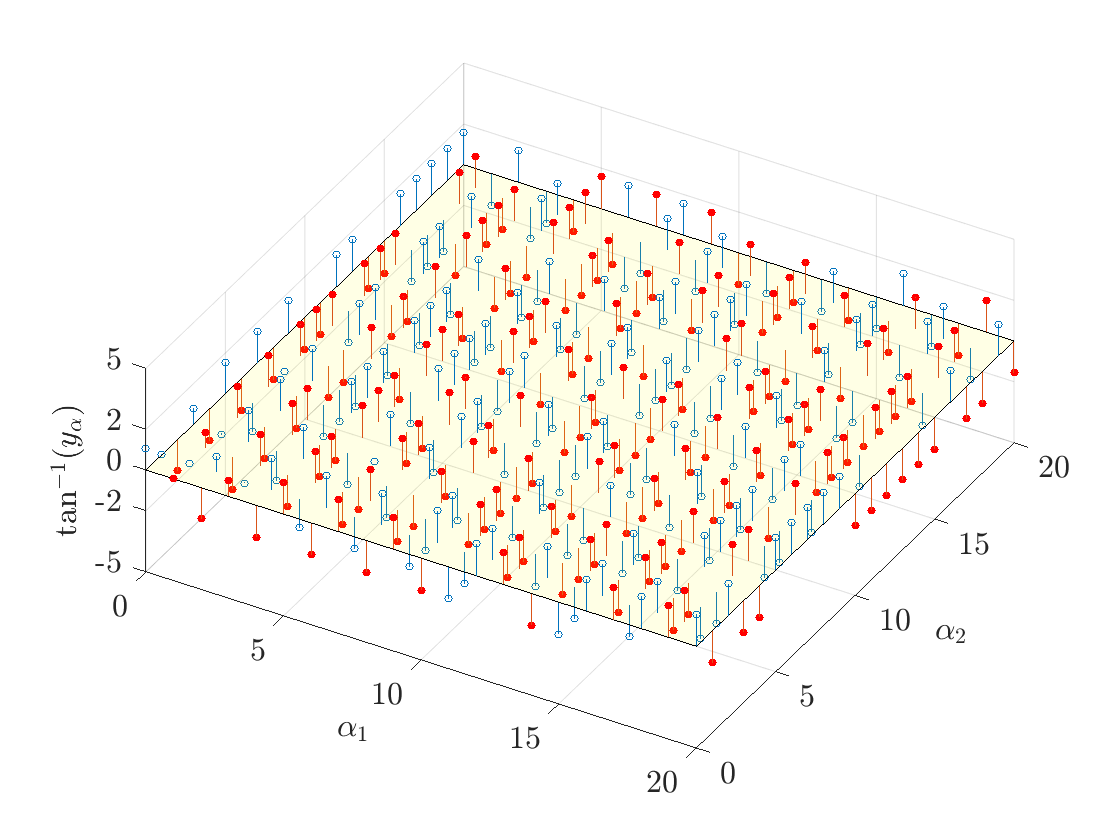}}

\vspace{-0.2cm}
\subfloat[$x=-1$, $\dot{x}=1$]{
\includegraphics[width=0.82\columnwidth,trim={0.5cm 0cm 0.5cm 1cm},clip]{./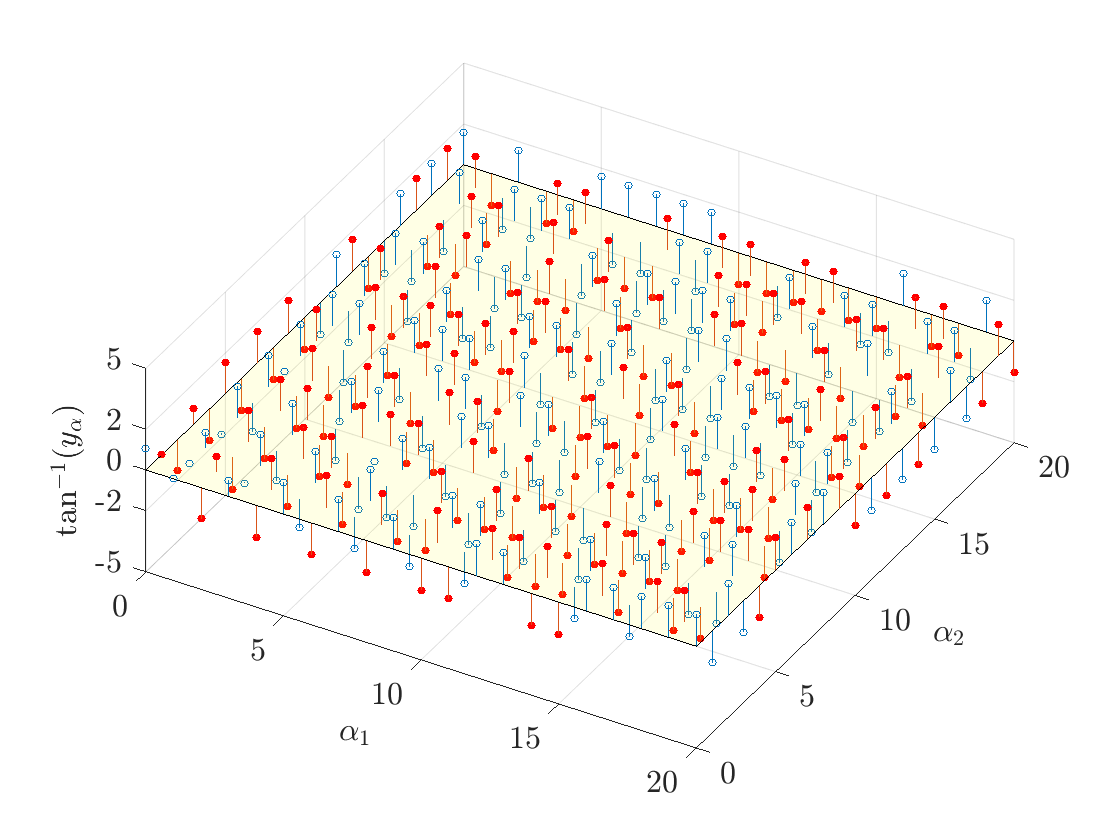}}
   \vspace{-0.2cm}
 \caption{Both are similar to each other. Topological statistics are also different (Figures \ref{moment topology} $\sim$ \ref{moment variances}).}
\label{active neurons for nonzero-nonzero1}
\end{figure}

\begin{figure}[H]
\vspace{-0.2cm}
\centering
    \includegraphics[width=0.82\columnwidth,trim={0.5cm 0cm 0cm 1.5cm},clip]{./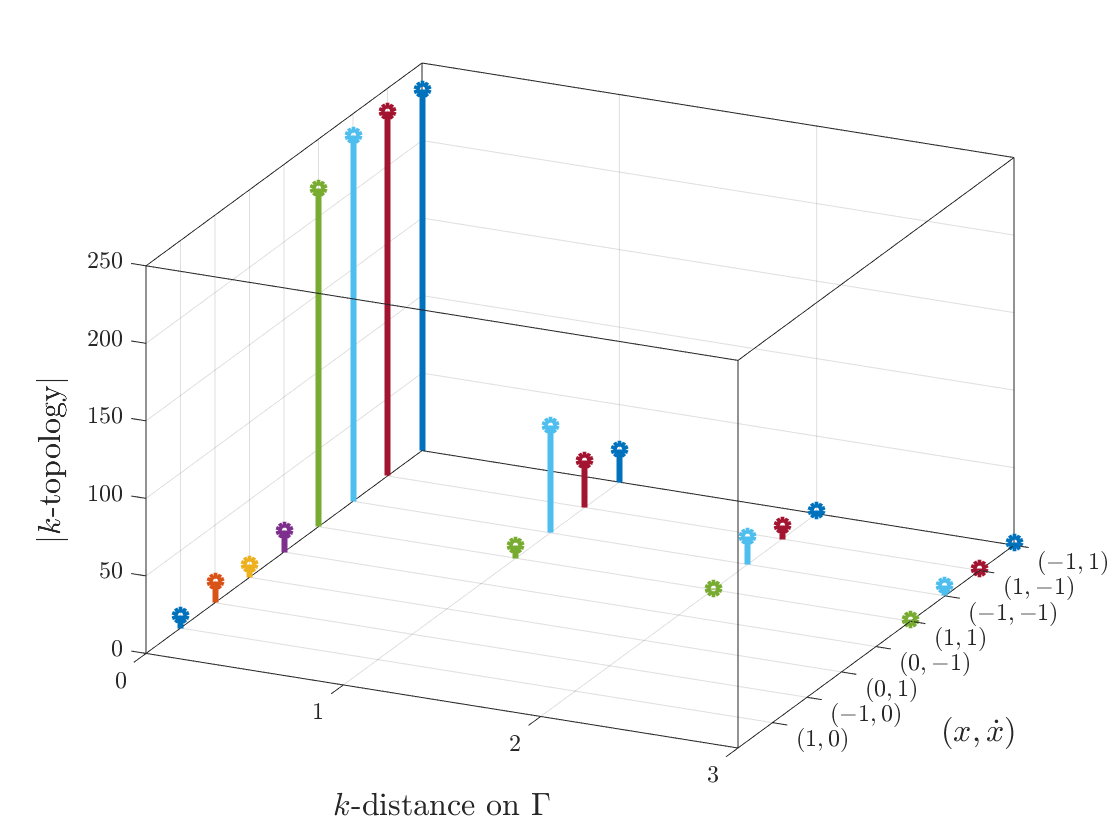}
\vspace{-0.2cm}    
\caption{The number of $k$-balls on each active path.  The numbers of $k$-balls for $(1,0)$, $(0,1)$ are smaller.}
\label{moment topology}
\end{figure}

\begin{figure}[H]
\vspace{-0.2cm}
\centering
    \includegraphics[width=0.82\columnwidth,trim={0.5cm 0cm 0cm 1.5cm},clip]{./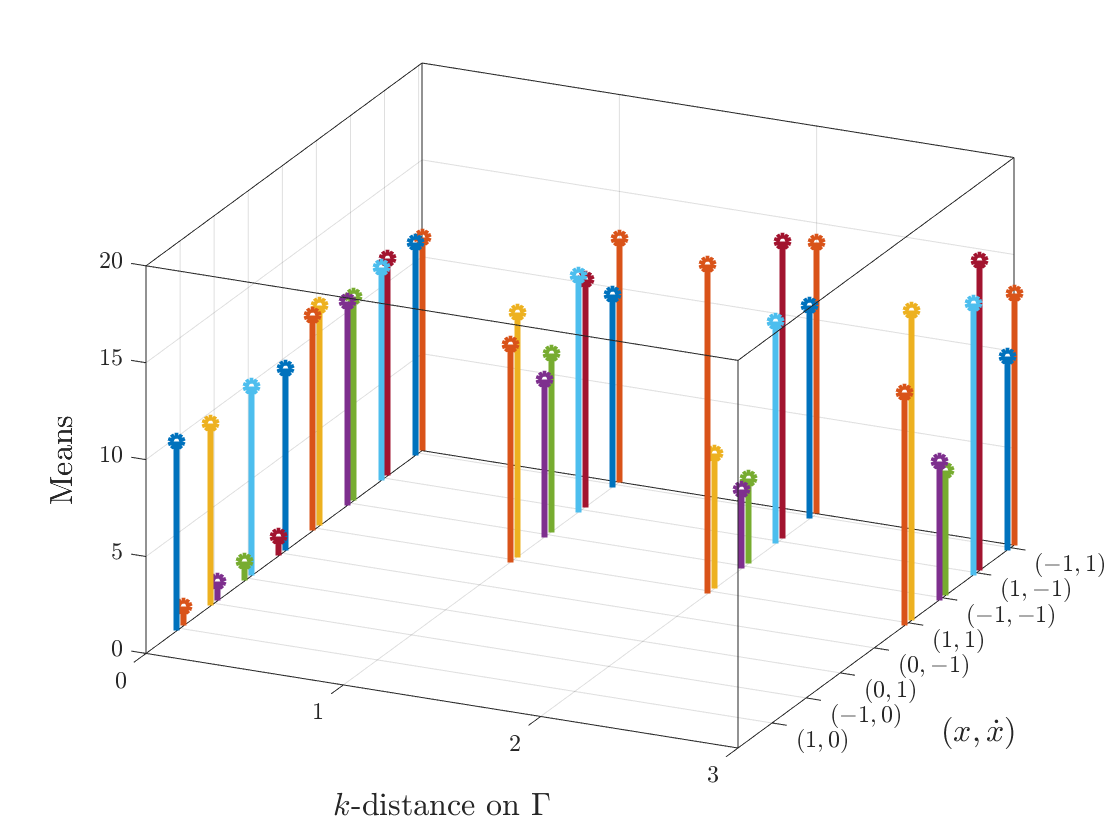}
\vspace{-0.2cm}    
\caption{The mean-distributions of $k$-balls on each active paths. Means for $(1,0)$, $(-1,0)$ are similar to each other and so do $(0,1)$, $(0,-1)$. In addition, Two groups 
change positions with each other.}
\label{moment means}
\end{figure}

\begin{figure}[H]
\vspace{-0.0cm}
\centering
    \includegraphics[width=0.82\columnwidth,trim={0.5cm 0cm 0cm 1.5cm},clip]{./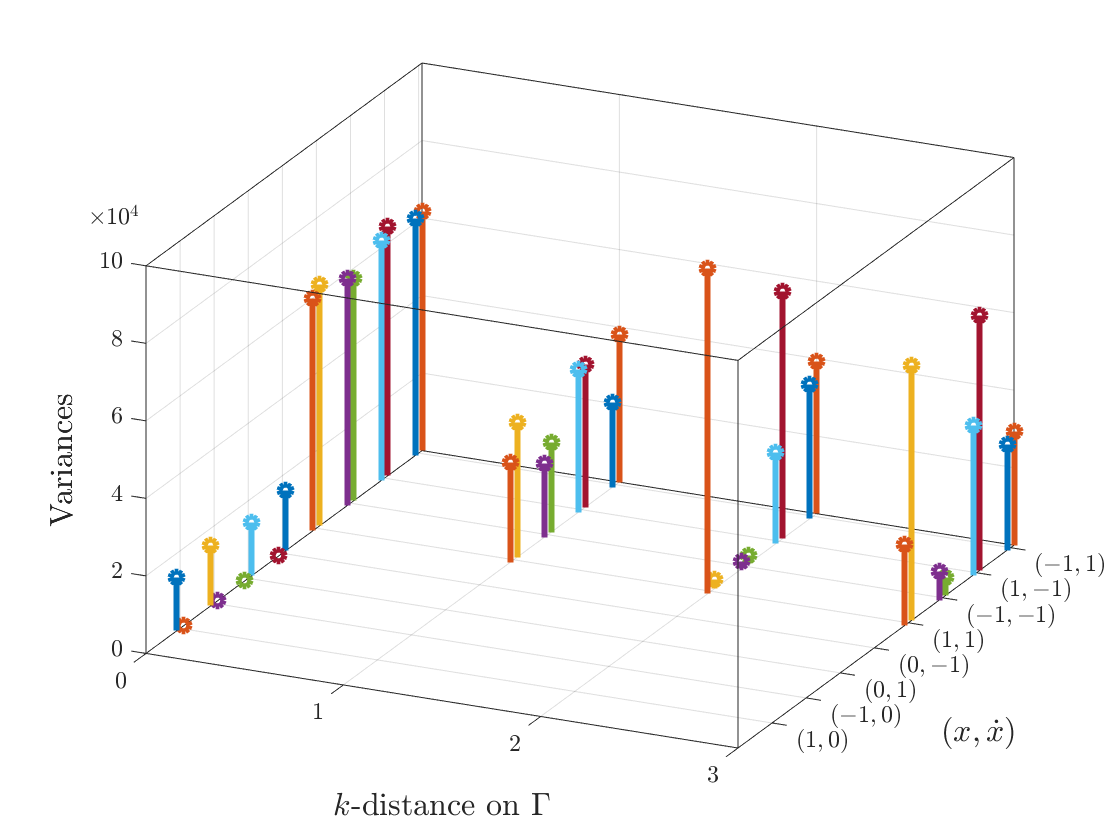}
\vspace{-0.2cm}    
\caption{The variance-distributions of $k$-balls on each active paths. Variances for $(1,0)$, $(-1,0)$ are similar to each other and so do $(0,1)$, $(0,-1)$. In addition, Two groups 
change positions with each other.}
\label{moment variances}
\end{figure}

\smallskip

\section{Conclusion and Further study}

The probabilistic neural network as an unsupervised learning model provides globally unique learning model as an optimal solution for observed samples in probability. The networks are essentially data driven models that do not depend on the
back-propagation because our proposed networks are not trained through iterative computation.
Hereby, these network models would be applied 
in a direction where quality is more important than amount of data in industry, e.g., such as financial service and bioengineering industries. 
These networks provide the learning rates which depend on the drawn samples, and thus,
a probabilistic neural network maximizes the utilization of data. 
The application of  probabilistic neural network technologies, especially for medical image classification, disease prediction, and object recognition, could provide accurate and reliable results.

To appreciate the estimation of artificial intelligence we may need certain physical interpretations
and analysis of active neurons.
For example, a suitable topology on the collection of all active paths or a dictionary containing a meaningful concept, e.g., the standard hearing range 20 to \SI{20,000}{\hertz} for humans,  the $\alpha$-brainwave range  8 to \SI{12.99}{\hertz}, the first-order moments for (conditional) expectations, etc.



\begin{IEEEbiography}[{\includegraphics[width=1in,height=1.25in,clip,keepaspectratio]{./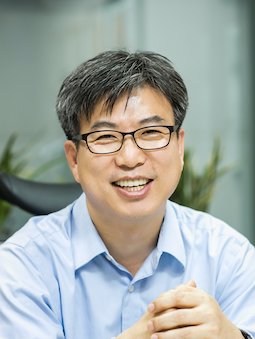}}]{Kyung Soo Rim}
received the B.\,S. degree from Korea University (KU), Korea in 1990, the MS and Ph.\,D. degree in harmonic analysis of mathematics from Korea Advanced Institute of Science and Technology (KAIST), Korea in 1992 and 1996, respectively. 
He is a full professor in the department of mathematics at Sogang University in Seoul, Korea
and served as the Dean of Academic and Admissions Affairs from 2015-18.
His recent research interests focus on the probabilistic machine intelligence related to the Koopman operator of dynamical systems 
including the continuity of convolution and compositions between function spaces.
\end{IEEEbiography}


\begin{IEEEbiography}[{\includegraphics[width=1in,height=1.25in,clip,keepaspectratio]{./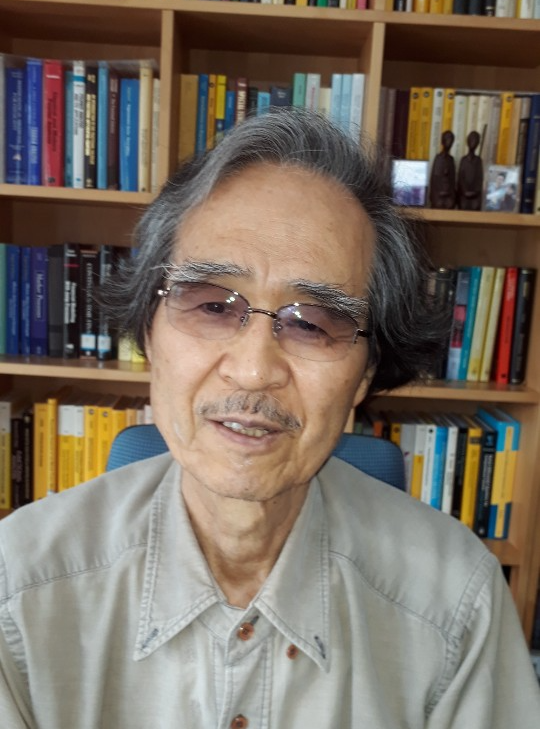}}]{U Jin Choi}
received the B.\,S. degree from Seoul National University (SNU), Korea in 1974 and Ph.\,D. degree from Carnegie Mellon University in 1987, U.\,S.\,A., both in mathematics. He is currently emeritus professor at Korea Advanced Institute of Science and Technology (KAIST) and a member of Research Advisory Committee at National Institute of Mathematical Sciences (NIMS), Korea and  
served as a full professor in the department of mathematical sciences at KAIST until 2011.  
His current research interests are statistics, dynamical systems theory, and machine intelligence in viewpoint of mathematics including stochastic calculus.
\end{IEEEbiography}

\vfill

\end{document}